\documentclass[a4paper]{amsart} 
\usepackage{amsmath}
\usepackage{amssymb}
\usepackage{amsthm}
\usepackage{mathrsfs}
\usepackage{graphicx}
\usepackage{mathtools}
\usepackage[colorlinks,pdfdisplaydoctitle,linkcolor=blue,citecolor=blue]{hyperref}
\usepackage{caption}
\usepackage{subcaption}
\usepackage{url}
\usepackage{epstopdf}
\usepackage{pgf,tikz}
\usepackage{todonotes}
\usepackage{bbm}
\usepackage{extarrows}
\usetikzlibrary{arrows}
\usepackage{algorithm}
 \usepackage{algorithmicx}
 \usepackage{algpseudocode}

 \usepackage{xcolor,pifont}
\newcommand*\colourcheck[1]{%
  \expandafter\newcommand\csname #1check\endcsname{\textcolor{#1}{\ding{52}}}%
}
\colourcheck{blue}
\colourcheck{green}

\newcommand*\colourmark[1]{%
  \expandafter\newcommand\csname #1mark\endcsname{\textcolor{#1}{\ding{55}}}%
}
\colourmark{red}

\usepackage{amssymb,amsthm}    
\usepackage{graphicx}          
\usepackage{amsmath}         
\usepackage{caption}
\usepackage{subcaption}
\usepackage{dsfont}
\usepackage{listings}
\usepackage{bbm}
\usepackage{color}
\usepackage{ dsfont }
\usepackage{enumerate}
 \usepackage{relsize}
\usepackage{xfrac}

\usepackage{changes}
\newcommand{\stkout}[1]{\ifmmode\text{\sout{\ensuremath{#1}}}\else\sout{#1}\fi}
\setdeletedmarkup{\stkout{#1}}

\renewcommand{\Delta}{\triangle}

\usepackage{pifont}
\usepackage{hhline}

\definecolor{darkblue}{rgb}{0,0,0.7}

\definecolor{darkgreen}{rgb}{0.01,0.75,0.24}

 
\def \Ee[#1]{\mathcal{E}^{\text{{#1}}}}
\def\R{\mathbb{R}}

\def\pa[#1,#2]{\frac{\partial {#1}}{\partial {#2}} }

\def\idom[#1,#2,#3]{\int_{#1}\hspace{1pt} {#2} \hspace{1pt} \text{d}{#3}}
\def\res[#1,#2]{\left.{#1}\right|_{#2}}

\def\var[#1,#2]{\langle \delta \mathcal{E}^{\text{{#1}}}({#2}),v\rangle}
\def\vars[#1,#2,#3]{\langle \delta^2\mathcal{E}^{\text{{#1}}}({#2})v,{#3}\rangle}
\def\vard[#1,#2,#3,#4]{\langle \delta\mathcal{E}^{\text{{#1}}}({#2})-\delta\mathcal{E}^{\text{{#3}}}({#4}),v\rangle}

\def\E{\mathbb{E}}

\newcommand{\cF}{\mathcal{F}}

\newcommand{\calI}{\mathcal{I}}


\DeclareMathOperator*{\argmax}{arg\,max}
\DeclareMathOperator*{\argmin}{arg\,min}


\newcommand{\be}{\begin{equation}}
\newcommand{\en}{\end{equation}}
\newcommand{\ben}{\begin{equation*}}
\newcommand{\enn}{\end{equation*}}
\newcommand{\bea}{\begin{aligned}}
\newcommand{\ena}{\end{aligned}}

\def\ba#1\ena{\begin{align}#1\end{align}}
\def\ban#1\enan{\begin{align*}#1\end{align*}}



\theoremstyle{plain}
\newtheorem{thm}{Theorem}[section]

\newtheorem{assumption}[thm]{Assumption}

\newtheorem{proposition}[thm]{Proposition}

\newtheorem{remark}[thm]{Remark}

\numberwithin{equation}{section}

\begin{document}

\title[Adaptive Tikhonov strategies for stochastic EKI]{Adaptive Tikhonov strategies for stochastic ensemble Kalman inversion}

\author[S. Weissmann] {Simon Weissmann}
\address{Interdisciplinary Center for Scientific Computing, University of Heidelberg, 69120 Heidelberg, Germany}
\email{simon.weissmann@uni-heidelberg.de}

\author[N. K. Chada] {Neil K. Chada}
\address{Applied Mathematics and Computational Science Program, King Abdullah University of Science and Technology, Thuwal, 23955, KSA}
\email{neilchada123@gmail.com}

\author[C. Schillings] {Claudia Schillings}
\address{Mannheim School of Computer Science and Mathematics, University of Mannheim, 68131 Mannheim, Germany}
\email{c.schillings@uni-mannheim.de}

\author[X. T. Tong] {Xin T. Tong}
\address{Department of Mathematics, National University of Singapore, 119077, Singapore}
\email{mattxin@nus.edu.sg}

\subjclass{65M32, 60G35, 65C35, 70F17}

\keywords{Ensemble Kalman filter, inverse problems, Tikhonov regularization, adaptive regularization, continuous-time limits}

\begin{abstract} 
{
Ensemble Kalman inversion (EKI) is a derivative-free optimizer aimed at solving inverse problems, taking motivation from the celebrated ensemble Kalman filter. The purpose of this article is to consider the introduction of adaptive Tikhonov strategies for EKI. This work builds upon Tikhonov EKI (TEKI) which was proposed for a fixed regularization constant. By adaptively learning the regularization parameter, this procedure is known to improve the recovery of the underlying unknown. For the analysis, we consider a continuous-time setting where we extend known results such as well-posdeness and convergence of various loss functions, but with the addition of noisy observations.  Furthermore, we allow a time-varying noise and regularization covariance in our presented convergence result {which mimic adaptive regularization schemes.} In turn we present three adaptive regularization schemes, which are highlighted from both the deterministic and Bayesian approaches for inverse problems, which include bilevel optimization, the MAP formulation and covariance learning. We numerically test these schemes and the theory on linear and nonlinear partial differential equations, where they outperform the non-adaptive TEKI and EKI.}
\end{abstract}

\maketitle

\section{Introduction}
\label{sec:intro}

Inverse problems \cite{BB18,EHN96,KS04,AMS10} are ubiquitous in nature, science and engineering. Mathematically they are concerned with the recovery of some quantity of interest $u \in X$ from noisy measurements $y \in \R^{K}$, which is modeled by
\begin{equation}
\label{eq:inv}
y = G(u) + \eta, \quad \eta \sim {N}(0,\Gamma).
\end{equation}
In \eqref{eq:inv}, $G:X \rightarrow \R^K$ is a forward operator between the parameter and observation space, and $\eta$ is an additive Gaussian noise with known positive definite covariance $\Gamma\in\R^{K\times K}$. For simplicity, we will assume that the parameter space is  finite dimensional, in particular we will assume $X=\R^{d_u}$ where $d_u$ denotes the dimension of the unknown $u$. Inverse problems in the traditional, or classical, sense are aimed at minimizing some functional of the form \begin{equation}
\label{eq:func1}
\mathcal{I}(u;y) := \frac{1}{2} \big\| y - G(u)\big\|^2_{\Gamma} + \mathcal{S}_{\lambda}(u),
\end{equation}
where $\mathcal{S}_{\lambda}(u)$ is a penalty term to act as a regularizer with regularization parameter $\lambda>0$ and $\|\cdot\|_\Gamma = \|\Gamma^{-1/2}\cdot\|$ denotes a scaled euclidean norm in $\R^K$.  Common regularization schemes \cite{BB18,EHN96} include Tikhonov, $\mathcal{S}_{\lambda}(u) = \frac{\lambda}{2}\|u\|^2$,
 as well as $\ell_p$ regularization $\mathcal{S}_{\lambda}(u)=\lambda|u|_p^p$ and total variation, $\mathcal{S}_{\lambda}(u)=\lambda| \nabla u|^2$.

Recent advances in inverse problems are sparked by many new ideas from of both statistics and optimization literature. In particular, one methodology which has shown great promise is ensemble Kalman inversion (EKI). EKI can be viewed as the application of filtering \cite{BC09,LSZ15}, in particular the ensemble Kalman filter (EnKF) \cite{GE09,GE03}, to solve inverse problems. It is based on updating an ensemble of particles $\{u^{j}\}_{j=0}^J$, through the sample mean and sample covariances.  An attractive property associated with EKI is that it is derivative-free \cite{LMW19,NS17}, which can reduce the computational cost considerably for high-dimensional problems. 

{Our aim in this work is to extend the current results of Tikhonov regularization for EKI (abbreviated to TEKI), where we consider learning the regularization parameter adaptively, opposed to choosing a fixed parameter value for $\lambda$.  {On the one side, we present a theoretical convergence analysis which allows for time depending regularization and noise scaling, and on the other side,  for tuning the outcome of TEKI we are going to propose a row of adaptive regularization schemes based on bilevel optimization as well as on hierarchical Bayesian methods.} Before going into details, we provide a review of related literature of the ensemble Kalman inversion.}

\subsection{Literature overview}
{
Since the formulation of EKI \cite{ILS13,LR09} as a derivative-free optimizer, with respect to {inverse problems based on PDE-constrained optimization problems}, there has been a significant increase in related research. This can be divided into primarily two entities, one is the incorporation of Bayesian \& uncertainty quantification (UQ) methodologies, and the second is focused on optimization \& variational approaches.  }

While the EKI can approximate the posterior distribution under linear and Gaussian assumptions,  it is known to be not consistent with the posterior distribution for nonlinear forward maps \cite{ErnstEtAl2015}.  In order to quantify the approximation of posterior distributions the EKI has been analysed in its mean field limit connected to the Fokker--Planck equation \cite{DL19,MHGV2018}.  Other examples include Bayesian techniques that modify EKI so it can approximate posterior distributions \cite{DL21,GHLS19,HSS21,RW2021}.  Furthermore, hierarchical techniques have been included which seek to incorporate the learning of parametrized information on the underlying unknown. In the case where the unknown is a Gaussian random field, certain hyperparameters could be the length-scale or regularity of the covariance operator \cite{NKC18,CIRS18}. 

Much of the focus on EKI analysis has been with respect to its capabilities as an optimizer. Recent papers have adopted common optimization procedures such as the incorporation of box-constraints \cite{CSW19}, and the introduction of Tikhonov and iterative regularization \cite{CT19,CST19,MAI16,YL21,SSW20}. Most of the regularization schemes introduced so far have been for a fixed choice of regularization parameter $\lambda>0$. However, some recent work has considered adaptive choices and convergence \cite{IY20,PS21}, related to iterative regularization. As EKI updates an ensemble of particles through the sample mean and covariances, Tikhonov regularization is quite a natural choice as it can be interpreted as a form of Gaussian regularization.  Furthermore, the EKI has been shown to be a promising optimization method for the training task in different machine learning applications \cite{GSW2020,KS18}. Other work related to the EKI has been on deriving theory for both the continuous and discrete formulations \cite{BSW18,BSWW19,BSWW2021,bungert2021, CT19, SS17}. {However, most existing analyses assume either the observation model contains no noise, or the EKI iteration follows a deterministic formula. These assumptions are not practical when we include adaptive learning procedures.} {Therefore, our analysis will be based on the stochastic formulation of EKI without noise-free assumption on the underlying noise model, and we will allow time depending noise and regularization scales in order to mimic adaptive choices of the regularization scheme.  Some of the schemes we will propose are based on data-driven regularization, which has seen a recent interest in the inverse problem community. Such examples include through neural networks, projection and dictionary learning \cite{AMOS16,AKS20,LSA20,LOS18}.}

\subsection{Contributions of this work} 

\begin{figure}[h!]
\includegraphics[width=0.6\textwidth]{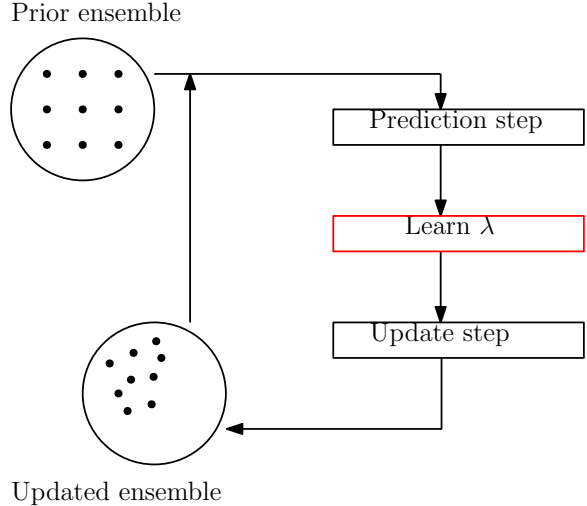}
\caption{Sketch representation of regularized ensemble Kalman inversion with the inclusion of data-driven learning.}
\label{fig:intro_reg}
\end{figure}

We develop adaptive strategies for Tikhonov EKI (TEKI), where we consider the task of choosing the regularization parameter within the iterative method of finding the underlying unknown parameter.  To have some theoretical understanding of our new algorithms, we extend the EKI analysis in the noisy regime of data. This is challenging, as noise in the observations leads to an unstable behavior of the original EKI method.  We will alleviate the stability issues by using ideas from Tikhonov regularization, which has been successfully applied to EKI in the noise-free case \cite{CST19}.  {Furthermore,  in our theoretical convergence analysis we consider time depending noise and regularization scales modeling the underyling adaptive regularization scheme. We introduce a number of new adaptive Tikhonov strategies aimed at improving stability within the noisy EKI, and outperforming TEKI for a fixed regularization parameter.  In Figure \ref{fig:intro_reg} we show the idea of learning the regularization parameter within EKI.  Viewing the prediction step as generation of training data, the adaptive regularization scheme comes right afterwards.  We make the following contributions:}

\smallskip
\begin{itemize}
 \item We extend the current work on TEKI for a fixed regularization parameter \cite{CST19}.  {Therefore, we consider the stochastic formulation of EKI viewed as coupled system of stochastic differential equations (SDE) resulting from the continuous time limit.  Namely with this we derive a continuous-time limit and in the linear setting we present a number of results such as well-posedness of the scheme, the ensemble collapse of the particles and convergence to the minimizer of the functional. To help achieve the convergence we consider the scheme under variance inflation. }
 \item We analyse the TEKI scheme for time-dependent noise and regularization covariances. Assuming that a learning process for the noise and regularization scaling is accessible,  we can still ensure convergence of the scheme as long as the progress of the learning process is fast enough. 
 \item  {We introduce three adaptive Tikhonov algorithms for EKI. The first is based on results by Chung et al. \cite{CE17,SCCKT18} where we adapt the regularization parameter in TEKI based on a bilevel optimization approach.  The second and third adaptive methods take motivation from Bayesian methodologies, namely through the the maximum a-posteriori (MAP) and that of hierarchical EKI \cite{NKC18,CIRS18,FTH01}. This approach is applicable to learn a parametrized covariance matrix as regularization or even the full covariance matrix through its eigen-decomposition.}
 \item Through various numerical experiments, we illustrate that the adaptive regularization methods for TEKI outperform that of both fixed regularization and the vanilla EKI. We test this on numerical examples including a one dimensional linear PDE, and a one dimensional nonlinear PDE of Darcy flow arising in geosciences.
\end{itemize}

The structure of this paper is given as follows. In Section \ref{sec:TEKI} will review and introduce Tikhonov ensemble Kalman inversion, where we extend previous results to the noisy and adaptive case. The presented results are based on the  continuous-time limit of the scheme.  Section \ref{sec:adapt} is devoted to the derivation and implementation of our adaptive Tikhonov procedures. These approaches will then be tested in Section \ref{sec:num} on various numerical examples including both linear and nonlinear models. Finally we conclude our findings in Section \ref{sec:conc}. The proof of our main theorem is presented in the Appendix.

\section{{Stochastic} Tikhonov Ensemble Kalman Inversion}
\label{sec:TEKI}

In this section we introduce Tikhonov regularization for EKI {with noisy perturbations}. We initiate the section with a background, motivated from \cite{CST19}, where we consider deriving a continuous-time limit and analysis based on a noisy case with perturbed observations and fixed regularization parameter. Our analysis will consist of showing the collapse of the ensemble and convergence to the minimizer of the Tikhonov loss functional. In order to do so we will require tools from data assimilation which include variance inflation. Furthermore, we allow time dependence of the assumed noise and regularization covariance, giving the possibilities of applying our convergence results to adaptive schemes for scaling of the ratio between noise and regularization.

\subsection{Ensemble Kalman Inversion}\label{ssec:EKI}
Let $\mathcal H_1 = \R^{d_1}$, $\mathcal H_2 = \R^{d_2}$ be real spaces and consider the possibly nonlinear mapping $F:\mathcal H_1\to\mathcal H_2$. We formulate the EKI algorithm in order to solve an inverse problem of the form
\begin{equation}\label{eq:ip_general}
z = F(w) + \eta,
\end{equation}
where $z\in H_2$ denotes the data and $\eta\sim\mathcal N(0,\Sigma)$ is observational additive Gaussian noise. The algorithm operates by updating an ensemble of particles $\{w^{(j)}_0\}_{j=1}^J$, where $J\ge2$ is the number of ensemble members, through the sample means
\begin{equation*}
\bar w_n = \frac{1}{J}\sum^{J}_{j=1}w^{(j)}_n,\quad \bar{F}_n = \frac{1}{J}\sum^{J}_{j=1}F(w^{(j)}_n), 
\end{equation*}
and sample covariances
\begin{equation*}
\begin{split}
B^{w p}_{n} &= \frac{1}{J}\sum^{J}_{j=1} \bigl(w_n^{(j)} - \bar{w}_n\bigr)\otimes \bigl({F}(w_n^{(j)}) - \bar{{F}}_n\bigr), \\
B^{pp}_{n}  &= \frac{1}{J}\sum^{J}_{j=1}
\bigl({F}(w_n^{(j)}) - \bar{{F}}_n\bigr)
\otimes   \bigl({F}(w^{(j)}_n) - \bar{{F}}_n\bigr),
\end{split}
\end{equation*}
where $\otimes$ denotes the tensor product defined as
\begin{equation*}
z_1\otimes z_2: \mathcal H_1\to\mathcal H_2,\quad \text{with}\ h\mapsto \langle z_2,h\rangle_{\mathcal H_2}\cdot z_1,
\end{equation*}
for $z_1\in\mathcal H_1,\ z_2\in\mathcal H_2$ and euclidean inner product $\langle \cdot, \cdot\rangle_{\mathcal H_2}$ in $\mathcal H_2$.  Further, we define predictions corresponding to our model, by
\begin{equation}
z_{n+1}^{(j)} = F(w_n^{(j)})+\xi_{n+1}^{(j)}, \quad \xi_{n+1}^{(j)} \sim\mathcal N(0,\Sigma).
\end{equation}
and refer to the above procedure as the \textit{prediction ste}. Then we can compute our new ensemble of particles $w^{(j)}_{n+1}$ at iteration $n+1$ through
\begin{equation}
\label{eq:updateU}
w^{(j)}_{n+1} = w^{(j)}_n + B^{w p}_{n} \big(B^{pp}_n +  \Sigma\big)^{-1}\big(z-z^{(j)}_{n+1}\big).
\end{equation}

We denote equation \eqref{eq:updateU} as the discrete form of EKI and consider a continuous-time analogue.
To do so, we first rescale the covariance matrix $\Sigma = h^{-1}\Sigma$ such that it includes a time stepping $h >0$, which in the limit formally leads to
\begin{equation}
\label{eq:sde}
dw^{(j)}_t = B^{w p}(w_t)\Sigma^{-1}\big(z - {F}(w_t^{(j)})\big)\,dt + B^{w p}(w_t)\Sigma^{-1/2}dW_t^{(j)},
\end{equation}
where $W_t^{(j)}$ denote independent Brownian motions in $\mathcal H_2$. Using the definition of $C^{w p}$ and $C^{pp}$, we can also write
\begin{equation}
\label{eq:sde2}
\begin{split}
dw^{(j)} = &\frac1J\sum\limits_{k=1}^J\langle F(w_t^{(k)}-\bar{F}_t,z - {F}(w_t^{(j)})\rangle_{\Gamma}(w_t^{(k)}-\bar{w}_t)\,dt\\
	 + &\frac1J\sum\limits_{k=1}^J\langle F(w_t^{(k)})-\bar{F}_t,\Sigma^{-1/2}dW_t^{(j)}\rangle (w_t^{(k)}-\bar{w}_t).
\end{split}
\end{equation}
For a detailed analysis of the discrete to continuous time limit we refer to \cite{BSWW2021}.  {In order to apply EKI to the original inverse problem \eqref{eq:inv} we set $\mathcal H_1 = \R^{d_u},$ $\mathcal H_2 = \R^K$, $w = u$, $z = y$ and consider the forward map $F\equiv G$.  Assuming that $G (\cdot) = G \cdot$ is linear, the EKI reads as 
\begin{equation}\label{eq:lin_EKI}
du_t^{(j)} =C(u_t)G^\top\Gamma^{-1}\big(y - Gu_t^{(j)}\big)\,dt + C(u_t)G^\top\Gamma^{-1/2}dW_t^{(j)}
\end{equation}
where $C(u)$ denotes the sample covariance in the parameter space $\R^{d_u}$
\[C(u) = \frac{1}{J}\sum^{J}_{j=1} \bigl(u^{(j)} - \bar{u}\bigr)\otimes \bigl(u^{(j)}) - \bar{u}\bigr). \]
 In \cite{} ignoring the diffusion in \eqref{eq:lin_EKI} the EKI has been analysed in a deterministic setting which results in a preconditioned gradient flow of the form
\begin{equation*}
\frac{du_t^{(j)}}{dt} = -C(u_t) \nabla_{u} \Phi(u_t^{(j)};y),
\end{equation*}
where the empirical covariance acts as the preconditioner of the flow seeking a minimization of the objective function $\Phi(u,y) = \frac12\|Gu-y\|^2_\Gamma$.  Nonetheless one direction which has seen limited contributions is analyzing EKI in the noisy case. For this example noisy observations are present and the performance of EKI can considerably deteriorate with the inclusion of noise. One paper aimed at tackling this was \cite{SS17b} where the authors introduced a discrepancy principle of the form
$$
\| {G}(u) - y_\ast \|^2_2 \leq \tau \sqrt{\textrm{Trace}(\Gamma)}, \quad \tau >1,
$$
where $y_\ast$ is the true observed data. The stopping criteria is crucial to ensure no blowup of the system, where well-posedness was able to be proven. Although this is a promising step towards handling the noisy case, there still remains open problems, such as other directions to prevent this and the extension to the stochastic formulation of EKI. Therefore, we are going to consider the incorporation of Tikhonov regularization in EKI and corresponding theoretical analysis in the stochastic formulation.}

\subsection{Incorporation of Tikhonov regularization}

Tikhonov regularization is an effective form of regularization and is well-understood within inverse problems \cite{BB18,EHN96}. In the context of EKI, this form of regularization can be interpreted as Gaussian prior assumption.  The derivation is taken from \cite{CST19} which we recap, where we now refer to this as TEKI. This includes a modification of the inverse problem and the corresponding objective functional of interest. {To do so, we will first introduce the EKI algorithm for general forward problems and reduce the derivation and the theoretical results for TEKI to the results of the original algorithm.}

{In order to incorporate Tikhonov regularization, we extend our original model \eqref{eq:inv} by incorporating prior information extending \eqref{eq:inv} to the equations
\begin{subequations}
\begin{align}
\label{eq:inv_re1}
y&=G(u) + \eta_1, \\ 
\label{eq:inv_re2}
0&= u+\eta_2, 
\end{align}
\end{subequations}
where $\eta_1, \eta_2$ are independent random variables
distributed as $\eta_1 \sim \mathcal{N}(0,\Gamma)$ and $ \eta_2 \sim \mathcal{N}(0, {{\lambda}}^{-1}C_0)$, where $\Gamma\in\R^{K\times K}$, $C_0\in\R^{d_u\times d_u}$ are positive definite and $\lambda>0$ denotes the regularization parameter. Let $\mathcal H_1 = \R^{d_u}$ and $\mathcal H_2=\R^{K} \times \R^{d_u}$, then we define the variables $z,\eta$ 
and mapping $F: \R^{d_u} \mapsto \R^{K}\times \R^{d_u} $ as follows:
\[
z=\begin{bmatrix}
y\\
0
\end{bmatrix},\quad
F(u)=\begin{bmatrix}
{G}(u)\\
u
\end{bmatrix},
\quad
\eta=\begin{bmatrix}
\eta_1\\
\eta_2
\end{bmatrix},
\]
noting that then
\[
\eta \sim N(0,\Sigma), \quad
\Sigma =
\begin{bmatrix}
\Gamma & 0\\
0 & {{\lambda}}^{-1}C_0
\end{bmatrix}.
\]
Consequently we now consider the EKI algorithm for \eqref{eq:ip_general} where $u$ is now playing the role of $w$ and obtain with \eqref{eq:updateU} the TEKI update formula}
\begin{equation}
\label{eq:updateT}
u^{(j)}_{n+1} = u^{(j)}_n + B^{up}_{n} \big(B^{pp}_n +  \Sigma\big)^{-1}\big(z-z^{(j)}_{n+1})\big),
\end{equation}
with i.i.d.~predictions
\begin{equation}
\label{eq:dataT}
z^{(j)}_{n+1} = F(u^{(j)})+ \zeta^{(j)}_{n+1}, \quad \zeta^{(j)}_{n+1} \sim 
\mathcal{N}(0,\Sigma),
\end{equation}
where we include pertubation in the observation space as well as in the parameter space.
The associated loss function is given by 
\begin{equation}
\label{eq:loss3}
\ell_Z(z',z)=\frac12 \|z'-z\|_{\Sigma}^2.
\end{equation}
Alternatively we can express \eqref{eq:loss3} as a loss function of the original unknown $u$ with regularization as
\begin{equation}
\calI(u;y) := \frac{1}{2}\|y - G(u)\|_{\Gamma}^2 + \frac{{{\lambda}}}{2}  \|u\|_{C_0}^2.
\end{equation}
We now proceed with the analysis of the continuous-time limit of the TEKI
\begin{equation}
\label{eq:cts_com}
du^{(j)}_t = B^{up}(u_t) \Sigma^{-1}(z-F(u_t^{(j)}))dt + B^{up}(u_t)(\Sigma)^{-1/2}dW^{(j)}_t,
\end{equation}
where $W_t^{(j)}$ denotes independent Brownian motions on the space $\R^{K}\times \R^{d_u}$. We will denote the filtration introduced by the particle dynamics as $\mathcal{F}_t=\sigma(u_s,s\le t)$. 

Suppressing the dependance on $t$, we obtain, with the definition of the sample covariance, 
\begin{equation}\label{eq:TEKI_SDE}
\begin{split}
du^{(j)}= &\frac{1}{J}\sum^{J}_{k=1}(u^{(k)} - \bar{u})\langle F(u^{(j)}) - \bar{F}, \Sigma^{-1}(z-F(u^{(j)}))\rangle\,dt \\
			&+\frac{1}{J}\sum^{J}_{k=1}(u^{(k)} - \bar{u})\langle F(u^{(j)}) - \bar{F}, \Sigma^{-1/2} dW^{(j)}\rangle.
\end{split}
\end{equation}
and using the definition of $F$ we can similarly write
\begin{align*}
du^{(j)} = &\frac{1}{J}\sum^{J}_{k=1} \bigg(\langle G(u^{(j)})-\bar{G},\Gamma^{-1}(y-G(u^{(j)}))\rangle - \langle u^{(k)}-\bar{u},\lambda C_0^{-1} u^{(j)}\rangle \bigg)(u^{(k)} - \bar{u})\\
			&+\frac{1}{J}\sum^{J}_{k=1} \bigg(  \langle G(u^{(j)})-\bar{G},\Gamma^{-1/2} d{\hat W}^{(j)}\rangle \bigg)(u^{(k)} - \bar{u})\\
			&+ \frac{1}{J}\sum^{J}_{k=1} \bigg(  \langle u^{(j)})-\bar{u},\sqrt{\lambda}C_0^{-1/2} d{\tilde W}^{(j)}\rangle \bigg)(u^{(k)} - \bar{u}),
\end{align*}
where $\hat W^{(j)}$ and $\tilde W^{(j)}$ denote the first $K$ and last $d_u$ components of $W$ respectively.  {By definition} they are
independent Brownian motions.
\begin{remark}
{We note that based on the continuous-time limit the subspace property for the ensemble of particles \cite{ILS13} can be verified. Denoting by $S\subset \R^{d_u}$ the linear span of $\{u_0^{(j)}\}_{j=1}^J$ the particle systems remains in $S$, i.e.~$u_t^{(j)}\in S$ for all $t\ge0$ and $j=1,\dots,J$. Due to the subspace property we can transfer our presented results to a coordinate system in $S$ giving the possibility to generalize the presented theory to a general Hilbert space setting for the underlying parameter space $X$.}
\end{remark}

\subsection{Linear setting: Continuous-time analysis for fixed regularization}
{For our theoretical analysis we start with a fixed choice of regularization with $\lambda = 1$ and fixed regularization matrix $C_0$.  Note that we can always rescale $C_0$ in order to cover alternative choices of $\lambda>0$. In contrast, later we are going to analyse TEKI with a time-depending choice $C_t$ which allows to incorporate adaptive learning schemes of the regularization parameter $\lambda \mapsto \lambda_t$ as well as learning schemes for the whole regularization matrix $C_0\mapsto C_t$. Additionally, we emphasize that we also allow for time depending noise scaling through a time depending choice of the noise covariance matrix $\Gamma_t$. This can be generalized to a time depending $\Sigma_t$. }
To obtain further insight, we assume that $G(\cdot) = A\cdot$ is a linear forward operator with $A\in\R^{d_u\times K}$ leading to the following SDE
\begin{equation}
\label{eq:limit}
\begin{split}
du^{(j)}&= C(u) A^\top \Gamma^{-1}(y-Au^{(j)})dt - C(u)C_0^{-1}u^{(j)}dt\\ &\quad + C(u) A^\top \Gamma^{-1/2} d{\hat W}_t^{(j)} + C(u) C_0^{-1/2}d\tilde W_t^{(j)}.
\end{split}
\end{equation}
The following proposition establishes the existence and uniqueness of strong solutions for \eqref{eq:limit}.
\begin{proposition}
\label{prop:existence}
Consider an initial ensemble $u=\{u_0^{(j)}\}_{j=1}^J$ of $\cF_0$-measurable maps $u_0^{(j)}:\Omega\to X$ which are almost surely linearly independent. Then for the set of coupled SDEs given in \eqref{eq:limit} there exists for all $T\ge0$ a unique strong solution $(u_t)_{t\in[0,T]}$.
\end{proposition}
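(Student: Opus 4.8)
The coefficients of the system \eqref{eq:limit}, viewed as functions of the stacked ensemble $(u^{(1)},\dots,u^{(J)})\in\R^{Jd_u}$, are polynomial — the drift is cubic and the diffusion quadratic in the state — and hence locally Lipschitz and locally bounded. Standard SDE theory (pathwise uniqueness and local existence under locally Lipschitz coefficients) therefore yields a unique maximal strong solution up to an explosion time $\tau_\infty=\lim_{R\to\infty}\tau_R$, where $\tau_R=\inf\{t\ge0:\sum_{j}\|u^{(j)}_t\|^2\ge R^2\}$. The entire task thus reduces to proving non-explosion, i.e. $\tau_\infty=\infty$ almost surely. This cannot follow from a global Lipschitz estimate because of the super-linear growth of the coefficients, so my plan is to exploit the dissipative structure hidden in the linear TEKI dynamics.

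First I would separate the ensemble mean $\bar u=\frac1J\sum_k u^{(k)}$ from the deviations $e^{(j)}:=u^{(j)}-\bar u$. Averaging \eqref{eq:limit} over $j$ and subtracting shows the deviations solve a \emph{closed} system
\[
de^{(j)} = -C(u)\,M\,e^{(j)}\,dt + C(u)A^\top\Gamma^{-1/2}\,d\bar W^{(j)}_1 + C(u)C_0^{-1/2}\,d\bar W^{(j)}_2,\qquad M:=A^\top\Gamma^{-1}A+C_0^{-1}\succ0,
\]
where $\bar W^{(j)}_1,\bar W^{(j)}_2$ are the centered Brownian increments and $C(u)=\frac1J\sum_k e^{(k)}\otimes e^{(k)}$ depends on the $e^{(k)}$ alone. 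The key computation is the It\^o differential of the spread $\tr C(u)=\frac1J\sum_j\|e^{(j)}\|^2$: the drift coming from $-C(u)M e^{(j)}$ contributes $-2\tr(CMC)$, while the quadratic-variation (It\^o) correction contributes $+\tfrac{J-1}{J}\tr(CMC)$, so that
\[
d\,\tr C(u_t) = -\Bigl(1+\tfrac1J\Bigr)\tr\bigl(C(u_t)\,M\,C(u_t)\bigr)\,dt + dN_t,
\]
with $N_t$ a local martingale and $\tr(CMC)\ge0$ since $M\succ0$ and $C(u)\succeq0$. Hence $\tr C(u_t)$ is a non-negative supermartingale; localizing at $\tau_R$ and applying the maximal inequality for non-negative supermartingales gives $\mathbb{P}\bigl(\sup_{t\le T\wedge\tau_R}\tr C(u_t)\ge a\mid\cF_0\bigr)\le \tr C(u_0)/a$, a bound uniform in $R$, so the spread — equivalently $\|C(u_t)\|$ — cannot blow up.

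With the spread controlled, the mean is handled by a localized Gronwall argument. Averaging \eqref{eq:limit} gives the linear SDE $d\bar u = C(u)A^\top\Gamma^{-1}(y - A\bar u)\,dt - C(u)C_0^{-1}\bar u\,dt + (\text{noise})$, and computing $d\|\bar u\|^2$ yields $\mathcal{L}\|\bar u\|^2\le c\|C(u)\|(1+\|\bar u\|^2)+c'\|C(u)\|^2$. On the stochastic interval where $\tr C(u_t)\le K$ this is a genuine linear-growth bound, so Gronwall gives $\mathbb{E}\sup_{t\le T}\|\bar u_t\|^2\le C(T,K)$. A two-parameter stopping-time argument, localizing simultaneously in $\|u\|$ via $\tau_R$ and in the spread via $\sigma_K=\inf\{t:\tr C(u_t)\ge K\}$, then combines the two estimates: using $\sum_j\|u^{(j)}\|^2=J(\tr C(u)+\|\bar u\|^2)$ one obtains $\limsup_{R\to\infty}\mathbb{P}(\tau_R\le T)\le \tr C(u_0)/K$ for every $K$, and letting $K\to\infty$ gives $\mathbb{P}(\tau_\infty\le T)=0$. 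Hence $\tau_\infty=\infty$ a.s. and the maximal strong solution is global on every $[0,T]$. (The assumed a.s.\ linear independence of the initial ensemble is not needed for existence itself; it serves as the nondegeneracy condition $C(u_0)\succ0$ on the ensemble subspace used in the subsequent collapse and convergence results.)

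I expect the main obstacle to be the second paragraph: correctly accounting for the It\^o correction of $\tr C(u_t)$ and verifying that the dissipative drift strictly dominates the noise-induced growth — both scale like $\tr(CMC)$, and it is precisely the constant $1+\tfrac1J>0$ that makes $\tr C(u_t)$ a supermartingale rather than a process that could explode in finite time. Once this supermartingale property is in hand, the remainder is bookkeeping.
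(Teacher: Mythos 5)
Your proposal is correct, but it takes a genuinely different route from the paper. The paper's entire proof is a one-line reduction: the system \eqref{eq:limit} is exactly the stochastic EKI \eqref{eq:TEKI_SDE} applied to the augmented forward map $F(u)=(Au,u)^\top$, which is bounded and linear, so existence and uniqueness of a global strong solution follow directly from Theorem~3.4 in \cite{BSWW19}. What you have done instead is reconstruct, self-contained, essentially the internal mechanics that such a well-posedness theorem rests on: locally Lipschitz polynomial coefficients yield a unique maximal solution, and non-explosion is obtained from the mean--deviation splitting. Your key computation checks out: the centered noises $W^{(j)}-\bar W$ have quadratic variation $(1-\frac1J)I\,dt$, so the It\^o correction to $\tr C(u_t)$ is $\frac{J-1}{J}\tr(CMC)$ against the drift contribution $-2\tr(CMC)$, netting $-(1+\frac1J)\tr(CMC)\le 0$; this margin is consistent with the constant $\frac{J+1}{J^2}$ appearing in the ensemble-collapse rate quoted right after this proposition. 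Two bookkeeping points: since the proposition imposes no moment assumptions on $u_0$, the supermartingale maximal inequality and the Gronwall step must indeed be run conditionally on $\cF_0$ (or after localizing on events of bounded initial data), as you indicate; and in the drift of $\|\bar u\|^2$ note that $CM$ is not symmetric, so $\langle \bar u, CM\bar u\rangle$ has no sign, but your stated bound only needs $|\langle\bar u, CM\bar u\rangle|\le\|C\|\,\|M\|\,\|\bar u\|^2$, which holds. As for what each approach buys: the paper's reduction is short and keeps this result consistent with the same reference used for the collapse, monotonicity and variance-inflation results that follow, while your argument makes transparent \emph{why} the noisy TEKI dynamics cannot explode (the dissipation strictly dominates the noise-induced growth by the factor $1+\frac1J$) and would survive in settings where the augmented-map reduction is unavailable. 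Your parenthetical observation is also accurate: the a.s.~linear independence of the initial ensemble is not needed for non-explosion itself; it is inherited from the hypotheses of the cited theorem and is relevant for the subsequent collapse and convergence statements.
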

\begin{proof}
Since the set of coupled SDEs \eqref{eq:limit} can be viewed by \eqref{eq:TEKI_SDE} with $F:\R^{d_u} \to\R^K\times\R^{d_u} $ with
\begin{equation*}
F(u)=\begin{bmatrix}
A\\
I
\end{bmatrix}u,
\end{equation*}
which is again a bounded and linear map, the result follows by the application of Theorem 3.4 in \cite{BSWW19}. 
\end{proof}

\subsubsection{Quantification of the ensemble collapse}
\label{ssec:spread}
In the following, we will denote the spread of the ensemble of particles by $e^{(j)} = u^{(j)}-\bar u$ which will converge to zero as $t\to\infty$ in $L^2$ with a given rate.  This means, the ensemble collapses in time to its mean in the parameter space. 
\begin{proposition}
Let $u_0=\{u_0^{(j)}\}_{j=1}^J$ be $\cF_0$-measurable maps $u_0^{(j)}:\Omega\to X$ such that $C_0 = \mathbb{E}\bigg[\frac1J\sum\limits_{j=1}^J|\Sigma^{-\frac12}F(e_0^{(j)})|^2\bigg]<\infty$, then it holds true that
\begin{equation*}
\mathbb{E}\bigg[\frac1J\sum\limits_{j=1}^J|\Sigma^{-\frac12}F(e_t^{(j)})|^2\bigg]\le \frac{1}{C_0^{-1}+\frac{J+1}{J^2}t}.
\end{equation*}
Furthermore, it follows
\begin{equation*}
\mathbb{E}\bigg[\frac1J\sum\limits_{j=1}^J|e_t^{(j)}|^2\bigg]=\mathcal{O}\left(t^{-1}\right),
\end{equation*}
where $h(t)=\mathcal O(t^{-1})$ means that $h(t)$ decreases asymptotically of order $t^{-1}$ following the big O notation.
\end{proposition}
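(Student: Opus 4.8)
The plan is to reduce the whole statement to a scalar Riccati-type differential inequality for the single quantity $S_t:=\tfrac1J\sum_{j=1}^J|\Sigma^{-1/2}F(e_t^{(j)})|^2$. Writing $v_t^{(j)}:=\Sigma^{-1/2}F(e_t^{(j)})$ and exploiting the linearity of $F$, I would first extract the dynamics of the spread $e^{(j)}=u^{(j)}-\bar u$ from \eqref{eq:TEKI_SDE}. Subtracting the equation for $\bar u$, using $F(u^{(k)})-\bar F=F(e^{(k)})$ and $\sum_k e^{(k)}=0$, collapses the drift of $e^{(j)}$ to $-\tfrac1J\sum_k e^{(k)}\langle F(e^{(k)}),\Sigma^{-1}F(e^{(j)})\rangle$. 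Applying $\Sigma^{-1/2}F$ and setting $P_t:=\tfrac1J\sum_k v_t^{(k)}\otimes v_t^{(k)}$, the symmetric positive semidefinite empirical covariance of the $v$'s (which satisfies $\sum_k v^{(k)}=0$), the drift of $v^{(j)}$ becomes simply $-P\,v^{(j)}$, and one has $S_t=\Tr P_t$.

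Next I would apply Itô's formula to $|v^{(j)}|^2$. The delicate bookkeeping is the diffusion term: collecting the contribution of each driving Brownian motion $W^{(j')}$ shows that $v^{(j)}$ is driven by $W^{(j)}$ with coefficient $\tfrac{J-1}{J}P$ and by each $W^{(j')}$, $j'\neq j$, with coefficient $-\tfrac1J P$, so its quadratic variation equals $\big[(\tfrac{J-1}{J})^2+(J-1)\tfrac1{J^2}\big]P^2\,dt=\tfrac{J-1}{J}P^2\,dt$. Combining the drift $-2\langle v^{(j)},Pv^{(j)}\rangle$ with the Itô correction $\tfrac{J-1}{J}\Tr(P^2)$, summing over $j$ and using $\tfrac1J\sum_j\langle v^{(j)},Pv^{(j)}\rangle=\Tr(P^2)$ gives $dS_t=-\tfrac{J+1}{J}\Tr(P_t^2)\,dt+dM_t$ for a local martingale $M_t$. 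Since $S_t\ge0$ with nonpositive drift, a localization and Fatou argument shows $M_t$ is a genuine martingale and that $m(t):=\mathbb{E}[S_t]$ is finite (bounded by $m(0)$) and satisfies $\tfrac{d}{dt}m(t)=-\tfrac{J+1}{J}\mathbb{E}[\Tr(P_t^2)]$.

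To close the inequality I would use that $P_t$ is an average of $J$ rank-one matrices, hence $\mathrm{rank}(P_t)\le J$, so Cauchy--Schwarz applied to its eigenvalues yields $\Tr(P_t^2)\ge\tfrac1J(\Tr P_t)^2=\tfrac1J S_t^2$. With Jensen's inequality this gives the pointwise bound $m'(t)\le-\tfrac{J+1}{J^2}m(t)^2$, equivalently $\tfrac{d}{dt}m(t)^{-1}\ge\tfrac{J+1}{J^2}$, which integrates to $m(t)\le\big(m(0)^{-1}+\tfrac{J+1}{J^2}t\big)^{-1}$; this is the asserted bound with $C_0=m(0)$. For the second statement I would transfer collapse from the $\Sigma$-weighted $F$-norm to the parameter space: since $F(e)=\left[\begin{smallmatrix}Ae\\ e\end{smallmatrix}\right]$ and $\Sigma=\diag(\Gamma,C_0)$, one has $|\Sigma^{-1/2}F(e)|^2=\|Ae\|_\Gamma^2+\|e\|_{C_0}^2\ge\lambda_{\min}(C_0^{-1})|e|^2$, whence $\mathbb{E}[\tfrac1J\sum_j|e_t^{(j)}|^2]\le\lambda_{\max}(C_0)\,m(t)=\mathcal{O}(t^{-1})$. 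It is precisely the identity block arising from Tikhonov regularization that makes $e\mapsto F(e)$ bounded below and thereby guarantees collapse in the parameter space.

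The main obstacle is the stochastic bookkeeping of the second paragraph, namely tracking how the shared Brownian motions $W^{(j')}$ combine to produce the exact constant $\tfrac{J+1}{J}$ in the drift of $S_t$, together with the justification that the local martingale $M_t$ has zero expectation so that the expectation identity, and hence the Riccati inequality, is rigorous. The remaining trace inequality, Jensen step, and ODE integration are routine.
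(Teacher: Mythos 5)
Your proposal is correct, and it is in substance the same argument the paper relies on: the paper's own proof of this proposition is a one-line citation (``the first assertion follows by Theorem 4.2 in [BSWW19], the second by the definition of $F$''), and what you have written is precisely a self-contained reconstruction of that cited theorem's proof. Your bookkeeping checks out at every step: the spread drift collapses to $-Pv^{(j)}$ with $P_t=\frac1J\sum_k v_t^{(k)}\otimes v_t^{(k)}$; the diffusion coefficients $\frac{J-1}{J}P$ (own noise) and $-\frac1J P$ (shared mean noise) combine to the quadratic variation $\frac{J-1}{J}P^2\,dt$, so the It\^o correction plus the drift $-2\Tr(P^2)$ yields exactly $-\frac{J+1}{J}\Tr(P_t^2)\,dt$; and the rank-based Cauchy--Schwarz $\Tr(P^2)\ge\frac1J(\Tr P)^2$ together with Jensen gives the Riccati inequality whose integration produces the stated constant $\frac{J+1}{J^2}$ --- matching the proposition exactly, which confirms you have recovered the cited computation rather than a variant of it. Two small remarks: since $\sum_k v^{(k)}=0$, in fact $\mathrm{rank}(P)\le J-1$, so your argument even gives the slightly stronger constant $\frac{J+1}{J(J-1)}$, but $\mathrm{rank}\le J$ is what reproduces the bound as stated; and your notation $m(0)$ usefully disambiguates the paper's unfortunate double use of $C_0$ for both the prior covariance and the initial moment. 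The localization/Fatou/monotone-convergence justification you flag for the martingale term is the standard one and closes the only genuinely delicate gap, so nothing is missing.
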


\begin{proof}
The first assertion follows by Theorem 4.2 in \cite{BSWW19} and the second assertion follows by the definition of $F$.
\end{proof}
\subsubsection{Convergence of the regularized loss function}
The TEKI can be applied to minimize the regularized loss function given by 
\begin{equation*}
\mathcal{I}(u,y) = \frac12\|\Sigma^{-\frac12}(z-F(u))\|_{\R^{K}\times \R^{d_u} }^2 = \frac12 \|y-Au\|_{\Gamma}^2+\frac{1}{2}\|u\|_{C_0}^2.
\end{equation*}
We consider the residuals defined as
\begin{equation*}
\widetilde{r}^{(j)}_t := \Sigma^{-1/2}F(u_t^{(j)}-u^\ast) = \begin{pmatrix}
\Gamma^{-1/2}A(u_t^{(j)}-u^\ast)\\ C_0^{-1/2} (u_t^{(j)}-u^\ast)
\end{pmatrix},
\end{equation*}
where $u^\ast$ is the global minimizer of $\mathcal{I}$, i.e. $u^\ast$ is given by
\begin{equation*}
u^\ast:= \left(A^\top\Gamma^{-1}A+C_0^{-1}\right)^{-1}A^\top\Gamma^{-1}y,
\end{equation*}
and satisfies
\begin{equation*}
\nabla \mathcal{I}(u^\ast,y) = 0.
\end{equation*}
Hence, using the gradient structure of \eqref{eq:limit} and the fact that $\nabla \mathcal{I}(u^\ast,y) = 0$, we can write the dynamics of $\tilde r_t^{(j)}$ by
\begin{align*}
d\tilde r_t^{(j)} &= d\left( \Sigma^{-1/2}F(u_t^{(j)}-u^\ast)\right)\\ & = \Sigma^{-1/2}F B(u_t) F^\top \Sigma^{-1/2} \Sigma^{-1/2}(F(u_\ast-u_t^{(j)})\,dt + \Sigma^{-1/2}F B(u_t) F^\top \Sigma^{-1/2}\,dW_t^{(j)}\\
 &= -C(\tilde r_t^{(j)})\tilde r_t^{(j)}dt + C(\tilde r_t^{(j)})\,dW_t^{(j)}.
\end{align*}
Based on these derivations, we can apply Proposition~5.1 in \cite{BSWW19} in order to prove monotonicity of the quantity
\begin{equation}
\frac1J\sum_{j=1}^J \E[\|\tilde r_t^{(j)}\|^2] = \frac1J\sum_{j=1}^J \E[\|\Gamma^{-1/2} A(u_t^{(j)} - u_\ast)\|^2] + \E[\|C_0^{-1/2} (u_t^{(j)}-u_\ast)\|^2],
\end{equation}
i.e.~we have the following result:

\begin{proposition}\label{thm:residuals}
Assume that $y$ are noisy measurements of the true parameter $u^\dagger$ under $A$, i.e. $y = Au^\dagger+\eta^\dagger,$ where $\eta^\dagger\in\R^K$ denotes a realization of noise and let $u_0 = (u^{(j)}_0)$ be $\mathcal{F}_0$-measurable maps $u_0^{(j)}:\Omega \rightarrow \R^K$ such that we have bounded moments $\mathbb{E}\bigg[\frac1J\sum\limits_{j=1}^J |\widetilde r_0^{(j)}|^2\bigg] < \infty$. Then  $\mathbb{E}\left[\frac1J\sum\limits_{j=1}^J |\widetilde r_t^{(j)}|^2\right]$ is strictly monotonically decreasing in time.
\end{proposition}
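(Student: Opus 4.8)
The plan is to exploit the self-similar structure of the residual dynamics derived immediately above the statement. After the change of variables $\tilde r_t^{(j)} = \Sigma^{-1/2}F(u_t^{(j)}-u^\ast)$, the linear TEKI flow \eqref{eq:limit} reduces to
\[
d\tilde r_t^{(j)} = -C(\tilde r_t)\,\tilde r_t^{(j)}\,dt + C(\tilde r_t)\,dW_t^{(j)},
\]
where $C(\tilde r_t) = \Sigma^{-1/2}F\,C(u_t)\,F^\top\Sigma^{-1/2}$ is exactly the empirical covariance of the residual ensemble $\{\tilde r_t^{(j)}\}_{j=1}^J$ (since $\tilde r_t^{(j)}-\bar{\tilde r}_t = \Sigma^{-1/2}F(u_t^{(j)}-\bar u_t)$). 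This is precisely the canonical whitened EKI residual flow, so the monotonicity result Proposition~5.1 in \cite{BSWW19} applies; below I outline the self-contained computation underlying it.

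Set $V(t)=\frac1J\sum_{j=1}^J\E[\|\tilde r_t^{(j)}\|^2]$ and $s_t^{(j)}=\tilde r_t^{(j)}-\bar{\tilde r}_t$. Itô's formula applied to $\|\tilde r_t^{(j)}\|^2$ gives
\[
d\|\tilde r_t^{(j)}\|^2 = -2\langle \tilde r_t^{(j)},C(\tilde r_t)\tilde r_t^{(j)}\rangle\,dt + \tr\!\big(C(\tilde r_t)^2\big)\,dt + dM_t^{(j)},
\]
where $M_t^{(j)}$ is a local martingale and the correction $\tr(C(\tilde r_t)^2)$ comes from the diffusion $C(\tilde r_t)\,dW_t^{(j)}$ (as $C(\tilde r_t)$ is symmetric, $\tr(CC^\top)=\tr(C^2)$). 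The decisive algebraic step is the identity, valid because $\frac1J\sum_j s_t^{(j)}=0$,
\[
\frac1J\sum_{j=1}^J\langle \tilde r_t^{(j)},C(\tilde r_t)\tilde r_t^{(j)}\rangle = \tr\!\big(C(\tilde r_t)^2\big) + \langle \bar{\tilde r}_t, C(\tilde r_t)\bar{\tilde r}_t\rangle,
\]
which follows by writing $\tilde r_t^{(j)}=s_t^{(j)}+\bar{\tilde r}_t$, dropping the vanishing cross terms, and using $\frac1J\sum_j s_t^{(j)}\otimes s_t^{(j)}=C(\tilde r_t)$.

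Summing over $j$, taking expectations so that the martingale part drops out, and inserting the identity yields
\[
\frac{d}{dt}V(t) = -\E\!\big[\tr(C(\tilde r_t)^2)\big] - 2\,\E\!\big[\langle \bar{\tilde r}_t, C(\tilde r_t)\bar{\tilde r}_t\rangle\big].
\]
Both terms are nonnegative because $C(\tilde r_t)$ is positive semidefinite, so $V$ is nonincreasing; strictness follows once $C(\tilde r_t)\neq 0$ almost surely, which holds at every finite time since the initial ensemble is almost surely linearly independent and this is preserved by the well-posedness of \eqref{eq:limit} (Proposition~\ref{prop:existence}), forcing $\tr(C(\tilde r_t)^2)>0$.

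The main obstacle is not the algebra but the stochastic-analysis bookkeeping: one must check that $M_t^{(j)}$ is a genuine martingale rather than merely a local one, so that its expectation vanishes. This requires controlling $\E\int_0^t\|C(\tilde r_s)\tilde r_s^{(j)}\|^2\,ds$, and is where the bounded-second-moment hypothesis on $\tilde r_0^{(j)}$ together with a localization/stopping-time argument (as in \cite{BSWW19}) enters, typically bootstrapped from the nonpositivity of the drift obtained first at the level of stopped processes. The second delicate point is ensuring the non-degeneracy of $C(\tilde r_t)$ is retained for all finite $t$, so that the inequality $\frac{d}{dt}V(t)<0$ is \emph{strict} and not merely almost everywhere.
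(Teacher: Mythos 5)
Your proposal is correct and takes essentially the same route as the paper: the paper's proof consists exactly of the change of variables to $\tilde r_t^{(j)}$, the derivation $d\tilde r_t^{(j)} = -C(\tilde r_t)\tilde r_t^{(j)}\,dt + C(\tilde r_t)\,dW_t^{(j)}$ carried out just above the statement, and an appeal to Proposition~5.1 of \cite{BSWW19}, which is precisely your reduction and citation. Your additional It\^o computation (the identity $\frac1J\sum_j\langle \tilde r^{(j)},C\tilde r^{(j)}\rangle = \mathrm{tr}(C^2)+\langle\bar{\tilde r},C\bar{\tilde r}\rangle$ making the drift dominate the It\^o correction) merely unpacks the content of that cited proposition, and the localization and non-degeneracy bookkeeping you flag is likewise handled inside \cite{BSWW19}, so nothing essential differs.
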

While the application of Proposition~5.1 in \cite{BSWW19} to the original EKI algorithm does only provide monotonic decrease of the residuals for noise-free data, the incorporation of Tikhonov regularization leads to stability w.r.t.~noisy data.

\subsubsection{Variance inflation} 
To ensure convergence of the Tikhonov regularized loss function we will incorporate variance inflation into the algorithm, which can be shown to aid with stability for both EKI and data assimilation \cite{JLA07,JLA09,SS17}. The resulting scheme provides converges to the minimizer of $\mathcal{I}(u,y)$.
Under the assumption of the forward problem being linear and $C_0$ being strictly positive definite, it follows that the loss function $\mathcal{I}$  is strongly convex, since 
\begin{equation*}
\nabla^2 \mathcal I = A^\top\Gamma^{-1}A+ C_0^{-1}>0.
\end{equation*}
We will denote the smallest eigenvalue of $\nabla^2\mathcal{I}$ by $\kappa_{\min}>0$.  We will incorporate a variance inflation reduced over time into the system of SDEs \eqref{eq:limit} in the following way
\begin{equation}
\label{eq:limit_VI}
\begin{split}
du_t^{(j)} = &\left(C(u_t)+\frac{1}{t^\alpha+c}B\right)\left(A^\top\Gamma^{-1}(y-Au_t^{(j)})+C_0^{-1}u_t^{(j)}\right)\,dt\\
&+C(u)A^\top\Gamma^{-1/2}\,d{\hat W}_t^{(j)} + C(u)C_0^{-1/2}\, d\tilde W_t^{(j)},
\end{split}
\end{equation}
where $\alpha\in(0,1)$, $c>0$ and $B$ denotes a strictly positive definite matrix.  Similarly as before, we can now write the dynamics for $\tilde r_t^{(j)}$ by 
\begin{equation*}
d\tilde r_t^{(j)} = -(C(\tilde r_t^{(j)})+\frac{1}{t^\alpha+c} \tilde B)\tilde r_t^{(j)} + C(\tilde r_t^{(j)})\,dW_t^{(j)},
\end{equation*}
with $\tilde B = \Sigma^{-1/2} F BF^\top \Sigma^{-1/2}$ positive definite.  The application of Theorem~5.2. in \cite{BSWW19} leads to the following convergence result:
\begin{equation*}
\mathbb{E}\bigg[\frac1J\sum\limits_{j=1}^J |\widetilde r_t^{(j)}|^2\bigg]=\mathbb{E}\bigg[\frac1J\sum\limits_{j=1}^J |\Gamma^{-1/2} A(u_t^{(j)}-u^\ast)|^2 + |C_0^{-1/2}(u_t^{(j)}-u^\ast)|^2\bigg]\in \mathcal O(t^{-(1-\alpha)}).
\end{equation*}

\subsection{Extension to time-varying $\Sigma_t$}

The above presented results are specific to the case of fixed regularization parameters $\lambda$ and $C_0$ as well as fixed noise covariance $\Gamma$. As we are interested in choosing the regularization iteratively, we now seek to extend the convergence results for TEKI to the case of time-varying $\Sigma_t$. This will leads to an SDE for $u_t^{(j)}$ similar to \eqref{eq:limit_VI}, expect that $C_0$ is replaced by a time depending $C_t$.

The advantage of the following analysis is to transfer convergence results of TEKI with fixed choices of regularization and noise to adaptive changes of these quantities in time.  In particular, assuming to have an adaptive choice $C_t$ for the regularization matrix, including the choice of a regularization parameter $\lambda_t>0$ such that $C_t = \frac{1}{\lambda_t} C_0$ as well as an adaptive choice of the noise covariance $\Gamma_t$, we are interested in minimizing
\begin{equation*}
\mathcal I_t(u) = \frac12\|\Sigma_t^{-1/2}(z-F(u))\|^2 = \frac12\|y-Au\|_{\Gamma_t}^2 + \frac12\|u\|_{C_t}^2,
\end{equation*}
as time approaches infinity.  For simplicity, we assume that the learning progress of the adaptive choices $\Gamma_t$ and $C_t$ is deterministic and fast enough. To be more precise, we take the following assumption.
\begin{assumption}
\label{aspt:convGamma}
Suppose the following holds a.s.:  The learning process $\Psi_t=\frac{d}{dt}\Sigma_t^{-1}$ satisfies for some $\beta>1$ and $\kappa_1,\ \sigma_{\min}>0$ that
\begin{equation}
\label{eqn:asptconvGamma}
 \Sigma_t^{-1}\succeq \sigma_{\min} I,\quad-{\frac{1}{t^{\beta}+R}}\Sigma_t^{-1}\preceq \Psi_t\preceq {\frac{1}{t^{\beta}+R}}\Sigma_t^{-1} \quad \text{and} {\quad \int_0^t \|\Psi_t\|\, dt\le \kappa_1}.
\end{equation}
Furthermore, we assume that the smallest eigenvalues of $\tilde B_t = \Sigma_t^{-1/2} F B F^\top \Sigma_t^{-1/2}$ are bounded from below uniformly in time by $\sigma_{\min}$.
\end{assumption}
{The first requirement of \eqref{eqn:asptconvGamma} is that $\Sigma_t$ should not not be singular. 
This is an intuitive requirement, since otherwise $\mathcal{I}_t(u)$ can be infinite. The requirement for $\tilde B_t$ is imposed because of similar reasons. The second and third requirements in 
\eqref{eqn:asptconvGamma} are imposed on $\Psi_t=\frac{d}{dt}\Sigma_t^{-1}$. Essentially, they require the learning process cannot change $\Sigma_t$ too abruptly. In practice, this can usually be achieved by using an appropriate step size in the optimization algorithms.  }

Under this assumption we can write the global minimum of
\begin{equation*}
\mathcal I_t(u) = \frac12\|\Sigma_t^{-1/2}(z-F(u))\|^2,
\end{equation*}
depending on $\Sigma_t$ as
\begin{equation*}
u_t^\ast := u^\ast(\Sigma_t) = \left(A^\top\Gamma_t^{-1}A+C_t^{-1}\right)^{-1}A^\top\Gamma_t^{-1}y.
\end{equation*}
Similar as $\Sigma_t$ itself, we can also write the evolution equation in terms of noise covariance $\Gamma_t$ and regularization covariance $C_t$ as two separated evolution equations
\begin{equation*}
\Psi_t^\Gamma =  \frac{d}{dt}\Gamma_t,\quad \Psi_t^C = \frac{d}{dt} C_t.
\end{equation*}
In the following, we are interested in the dynamical behavior of
\begin{equation*}
\tilde r_t^{(j)} =\Sigma_t^{-1/2}\mathfrak r_t^{(j)} =\Sigma_t^{-1/2} F(u_t^{(j)} - u_t^\ast),
\end{equation*}
where $u_t^{(j)}$ is a strong solution of
\begin{equation*}
\begin{split}
du_t^{(j)} &= \left(C(u_t)+\frac{1}{t^\alpha+c}B\right)\left(A^\top\Gamma_t^{-1}(y-Au_t^{(j)})+C_t^{-1}u^{(j)}\right)\,dt\\
&\quad +C(u_t)A^\top\Gamma_t^{-1/2}\,d{\hat W}_t^{(j)} + {C(u)C_t^{-1/2}\, d\tilde W_t^{(j)}}\\
			&= \left(C(u_t)+\frac{1}{t^\alpha+c}B\right)F^\top \Sigma_t^{-1}(z-Fu_t^{(j)})dt + C(u_t)F^\top\Sigma_t^{-1/2}dW^{(j)}_t.
\end{split}
\end{equation*}

\begin{thm}\label{thm:residuals_VI}
Suppose Assumption \ref{aspt:convGamma} is satisfied and
assume that $y$ are noisy measurements of the true parameter $u^\dagger$ under $A$, i.e. $y = Au^\dagger+\eta^\dagger,$ where $\eta^\dagger\in\R^K$ denotes a realization of noise. Furthermore, let $u_0 = (u^{(j)}_0)$ be $\mathcal{F}_0$-measurable maps $u_0^{(j)}:\Omega \rightarrow \R^K$ such that we have bounded moments $\mathbb{E}\bigg[\frac1J\sum\limits_{j=1}^J |\widetilde r_0^{(j)}|^2\bigg] < \infty$ and let $B \in \mathcal{L}(\R^{d_u} ,\R^{d_u} )$ be a strictly positive definite operator. Then for all $\alpha>0$  it holds true that
\begin{equation*}
\mathbb{E}\bigg[\min_{s\leq t}\frac1J\sum\limits_{j=1}^J |\widetilde r_s^{(j)}|^2\bigg] \in\mathcal{O}\left(t^{-(1-\alpha)}\right).
\end{equation*}
\end{thm}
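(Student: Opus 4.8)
The plan is to reduce the time-varying problem to the fixed-$\Sigma$ analysis behind Proposition~\ref{thm:residuals} and its variance-inflated refinement, treating the time-dependence of $\Sigma_t$ and of the minimizer $u_t^\ast$ as controllable perturbations. First I would derive the It\^o dynamics of the unscaled residual $\mathfrak r_t^{(j)} = F(u_t^{(j)} - u_t^\ast)$. Differentiating $u_t^\ast$ produces an extra drift $-F\tfrac{d u_t^\ast}{dt}$, which I would compute by implicitly differentiating the normal equation $F^\top\Sigma_t^{-1}(z - Fu_t^\ast)=0$, giving $\tfrac{d u_t^\ast}{dt} = (F^\top\Sigma_t^{-1}F)^{-1}F^\top\Psi_t(z - Fu_t^\ast)$. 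Using this same optimality identity to cancel the residual-at-the-minimizer term, the drift coming from $F\,du_t^{(j)}$ collapses, exactly as in the fixed case, to $-\big(C(\tilde r_t)+\tfrac{1}{t^\alpha+c}\tilde B_t\big)\tilde r_t^{(j)}$ after conjugating by $\Sigma_t^{-1/2}$, where $\tilde r_t^{(j)}=\Sigma_t^{-1/2}\mathfrak r_t^{(j)}$ and I use the identity $\Sigma_t^{-1/2}FC(u_t)F^\top\Sigma_t^{-1/2}=C(\tilde r_t)$.

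Next I would track $V_t := \frac1J\sum_{j}\langle\mathfrak r_t^{(j)},\Sigma_t^{-1}\mathfrak r_t^{(j)}\rangle = \frac1J\sum_j|\tilde r_t^{(j)}|^2$, applying It\^o and crucially picking up the term $\frac1J\sum_j\langle\mathfrak r_t^{(j)},\Psi_t\,\mathfrak r_t^{(j)}\rangle$ from $d\Sigma_t^{-1}=\Psi_t\,dt$. The four contributions to $\frac{d}{dt}\mathbb E[V_t]$ are then handled as follows: (i) the sample-covariance drift together with the It\^o correction $\Tr(C(\tilde r_t)^2)$ is non-positive, by the same algebra as in Proposition~\ref{thm:residuals} / Proposition~5.1 of \cite{BSWW19}; (ii) the variance-inflation term gives the decisive negative contribution $-\frac{2}{t^\alpha+c}\frac1J\sum_j\langle\tilde r_t^{(j)},\tilde B_t\tilde r_t^{(j)}\rangle \le -\frac{2\sigma_{\min}}{t^\alpha+c}V_t$ by the uniform lower bound on the smallest eigenvalue of $\tilde B_t$ restricted to $\mathrm{range}(\Sigma_t^{-1/2}F)$, where all residuals live; (iii) the $\Psi_t$-term is bounded above by $\frac{1}{t^\beta+R}V_t$ using $\Psi_t\preceq\frac{1}{t^\beta+R}\Sigma_t^{-1}$; and (iv) the $u_t^\ast$-drift is bounded by $2g_t\sqrt{V_t}$ with $g_t:=|\Sigma_t^{-1/2}F\tfrac{d u_t^\ast}{dt}|$ (Cauchy--Schwarz over $j$). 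Combining and using Jensen, this yields
\begin{equation*}
\frac{d}{dt}\mathbb E[V_t]\le\Big(-\frac{2\sigma_{\min}}{t^\alpha+c}+\frac{1}{t^\beta+R}\Big)\mathbb E[V_t]+2g_t\sqrt{\mathbb E[V_t]}.
\end{equation*}

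To exploit this, I would first bound $g_t\lesssim\|\Psi_t\|$: since $\Sigma_t^{-1}=\Sigma_0^{-1}+\int_0^t\Psi_s\,ds$ has operator norm uniformly bounded by $\|\Sigma_0^{-1}\|+\kappa_1$ and is bounded below by $\sigma_{\min}I$, the factors $\Sigma_t^{\pm1/2}$, $(F^\top\Sigma_t^{-1}F)^{-1}$ and the minimizer residual $\Sigma_t^{-1/2}(z-Fu_t^\ast)$ are all uniformly controlled, so $\int_0^t g_s\,ds\lesssim\int_0^t\|\Psi_s\|\,ds\le\kappa_1$. Dropping the negative term and linearizing the Bernoulli-type inequality via $\psi=\sqrt{\mathbb E[V_\cdot]}$ (so $\psi'\le\frac{1}{2(t^\beta+R)}\psi+g_t$), Gr\"onwall together with $\beta>1$ and the integrability of $g_t$ shows $\mathbb E[V_t]$ is uniformly bounded. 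Reinserting this and integrating the differential inequality from $0$ to $t$ gives $2\sigma_{\min}\int_0^t\frac{\mathbb E[V_s]}{s^\alpha+c}\,ds\le \mathbb E[V_0]+\int_0^\infty\frac{\mathbb E[V_s]}{s^\beta+R}\,ds+2\int_0^\infty g_s\sqrt{\mathbb E[V_s]}\,ds=:C<\infty$, the finiteness again using $\beta>1$ and $\int\|\Psi_s\|\,ds\le\kappa_1$. Finally, since $s\mapsto(s^\alpha+c)^{-1}$ is decreasing and $V_s\ge\min_{r\le t}V_r$ for $s\le t$, I would estimate $C\ge\frac{2\sigma_{\min}}{t^\alpha+c}\int_0^t\mathbb E[V_s]\,ds\ge\frac{2\sigma_{\min}\,t}{t^\alpha+c}\,\mathbb E[\min_{s\le t}V_s]$, which rearranges to $\mathbb E[\min_{s\le t}V_s]\le C'\frac{t^\alpha+c}{t}\in\O(t^{-(1-\alpha)})$ as claimed; this is precisely why the running minimum, rather than $V_t$ itself, appears.

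The main obstacle is step (iv): controlling the drift generated by the moving target $u_t^\ast$, i.e. showing $g_t\lesssim\|\Psi_t\|$ with time-uniform constants. This is where Assumption~\ref{aspt:convGamma} does the real work, the lower bound $\Sigma_t^{-1}\succeq\sigma_{\min}I$ keeping $(F^\top\Sigma_t^{-1}F)^{-1}$ and the minimizer residual bounded, while $\int_0^t\|\Psi_s\|\,ds\le\kappa_1$ and $\beta>1$ make both the multiplicative perturbation $\frac{1}{t^\beta+R}$ and the additive perturbation $g_t$ integrable, so that neither destroys the boundedness of $\mathbb E[V_t]$ nor the integral estimate that produces the rate. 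A secondary point to verify carefully is that all residual quantities remain in $\mathrm{range}(\Sigma_t^{-1/2}F)$ (which holds since $F=[A;I]^\top$ is injective), so that the eigenvalue hypothesis on the rank-deficient matrix $\tilde B_t$ can legitimately be applied in step~(ii).
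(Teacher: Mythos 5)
Your proposal is correct and takes essentially the same route as the paper's appendix proof: the same dynamics for $\tilde r_t^{(j)}=\Sigma_t^{-1/2}F(u_t^{(j)}-u_t^\ast)$ with the two extra drifts from $\Psi_t$ and $\tfrac{d}{dt}u_t^\ast$ (the latter bounded by $\|\Psi_t\|$ via Assumption \ref{aspt:convGamma}), the same absorption of the It\^o correction into the sample-covariance drift, a Gr\"onwall step establishing uniform boundedness of $\E\big[\tfrac1J\sum_j|\tilde r_t^{(j)}|^2\big]$, and the same integrated inequality in which $\int_{t_0}^t (s^\alpha+R)^{-1}\,ds\gtrsim t^{1-\alpha}$ produces the running-minimum rate $\O(t^{-(1-\alpha)})$. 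Your variations are refinements rather than a different argument: implicit differentiation of the normal equation instead of the paper's explicit product rule for $u_t^\ast$, the $\sqrt{V}$-substitution instead of AM--GM in the Gr\"onwall step, pulling the minimum inside the expectation pathwise (which yields $\E[\min_{s\le t}V_s]$ directly, exactly matching the theorem statement, whereas the paper bounds $\min_{s\le t}\E[V_s]$ and relies on $\E[\min]\le\min\E$), and your explicit observation that the eigenvalue hypothesis on the rank-deficient $\tilde B_t$ must be read on $\mathrm{range}(\Sigma_t^{-1/2}F)$, where injectivity of $F=[A;I]$ makes it legitimate --- a point the paper's Assumption \ref{aspt:convGamma} glosses over.
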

\begin{proof}
The proof is deferred to the Appendix.
\end{proof}

\section{Adapting the regularization parameter}
\label{sec:adapt}

An important point to consider, in the theory of regularization for inverse problems, is the choice of the regularization parameter. The parameter itself can depend largely on the problem itself and the specific form of regularization \cite{BB18,EHN96}. In this section, we describe various ways to find a good choice of the Tikhonov parameter ${\lambda}$. {In Figure \ref{fig:adaptive_reg} we describe the task of adapting the regularization parameter within the algorithm of EKI. 
\begin{figure}[h!]
\includegraphics[width=0.90\textwidth]{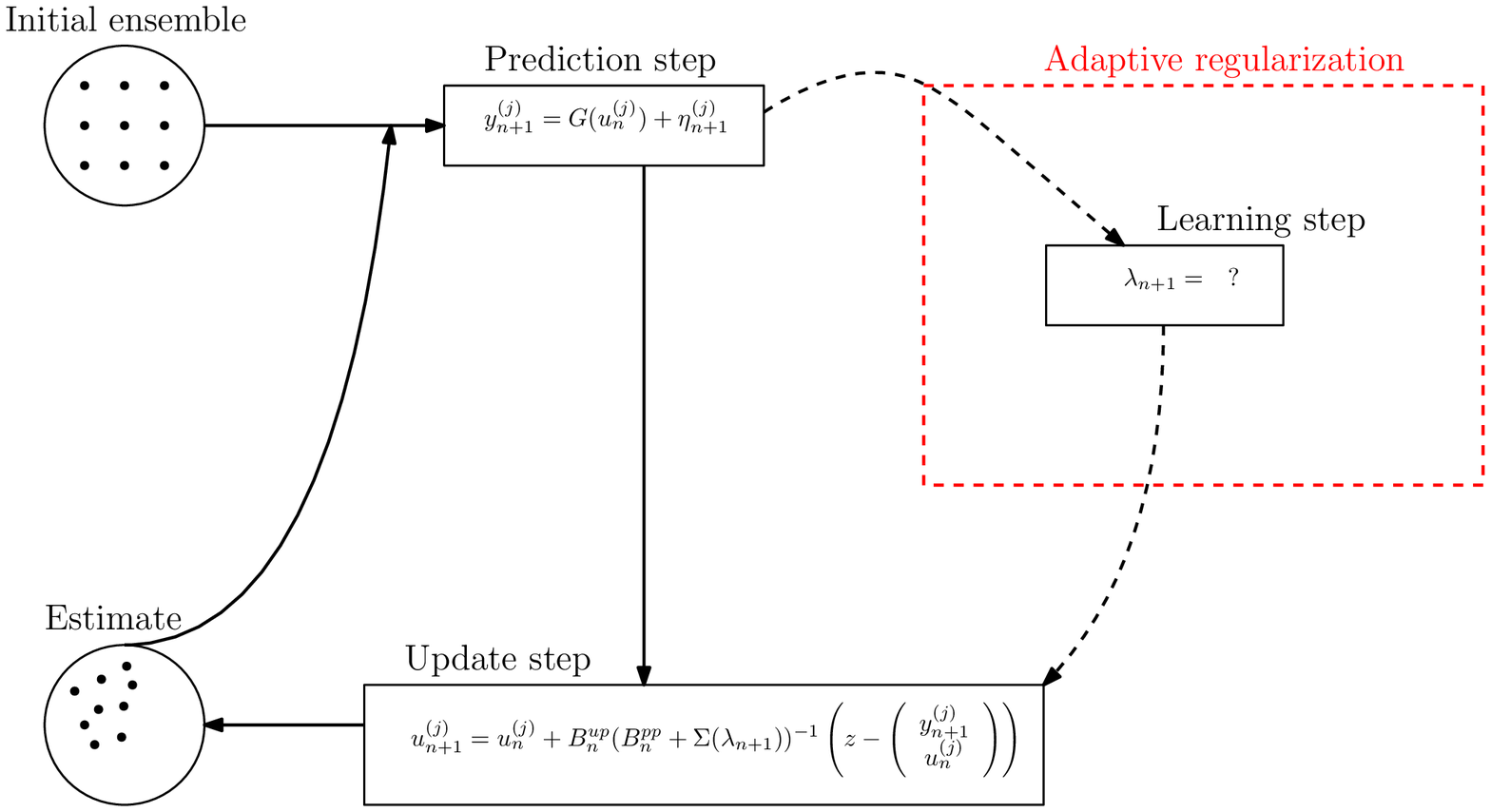}
\caption{Representation of adaptive regularized ensemble Kalman inversion.}
\label{fig:adaptive_reg}
\end{figure} 
While in the algorithm presented in Section \ref{sec:TEKI} we kept the regularization parameter fixed, we now consider different approaches where we adapt the regularization parameter in-between the prediction step and the update step.  {As a result, the assumed regularization covariance matrix varies in time $C_t = \frac{1}{\lambda_t}C_0$. We further note, that including a noise scaling parameter $\Gamma_t \to \frac{1}{\gamma_t} \Gamma$ our proposed methods give the possibility to adapt the ratio between noise and regularization by considering $C_t= \frac{\gamma_t}{\lambda_t} C_0$ and fixed noise covariance $\Gamma$.} In the following, we motivate three different adaptive procedures:
\begin{itemize}
\item The first method is based on a bilevel optimization problem. Here we will use our prediction step to generate artificial training data which will be used to adapt the regularization parameter minimizing the distance to the corresponding Tikhonov solutions of the training data. The application of this method can be interpreted as a parametric bootstrapping approach.
\item The second method is based on the MAP formulation in the Bayesian framework of inverse problems, where the regularization parameter $\lambda$ is treated as a scaling of a Gaussian prior covariance matrix.
\item The third method is motivated through ideas of hierarchical Bayesian methods, where we treat the regularization parameter as hyperparameter of the underlying Gaussian prior covariance matrix. This approach additionally extends to multiple regularization parameter and it is even possible to update the whole covariance matrix via a diagonalization.
\end{itemize}
}

\subsection{Bilevel learning applied to EKI: Algorithmic approach} \label{ssec:bilevel_linear} 
One difficulty in the search of optimal tuning parameter $\lambda$ for EKI is how to quantify the ``\textit{goodness}" of a given $\lambda$. 
One natural criterion is using the generalization error, which can be defined as 
\[
l(\lambda)=\E \mathcal{L}(u_{\lambda}(Y),U). 
\]
where $Y$ is a random sample generated by the observation model $Y=G(U)+\eta$. 
In our previous work \cite{CSTW20}, we have assumed the existence of i.i.d. training samples from the joint distribution of $(U,Y)$,  such that $l(y)$ can be approximated by the Monte Carlo average. In this paper, we do not assume the existence of such training samples and seek a different approach known as parametric bootstrap. In statistics, one first builds a parametric model $Z\sim p_{\theta,\lambda}$, and then find the optimal parameter $\hat{\theta}$ and tuning parameter $\lambda$ from data $z^1,\ldots,z^n$. It is often of interest to estimate error $\E |f(\theta)-f(\hat{\theta})|^2$ for some test function $f$, but this quantity is often not accessible if $n$ is too small. The parametric bootstrap method considers generating different batches of artificial data $D^{(i)}=\{\hat{z}^{i,1},\ldots, \hat{z}^{i,n}\}$ from the distribution $p_{\hat{\theta},\lambda}$, and use each batch to produce an estimation $\hat{\theta}^{(i)}$. Then an estimator of the mean square error (MSE) can be obtained by 
\[
\frac1 M\sum_{i=1}^M\E |f(\hat{\theta}^{(i)})-f(\hat{\theta})|^2.
\]
In EKI, $u$ represents the $\theta$ we want to estimate, and we have only $n=1$ real data $y$, therefore we see $u^j$ as a learnt result. To evaluate how good $u^{(j)}$ and $\lambda$ are, we apply bootstrap and generate a data $y^{(j)}=G(u^{(j)})+\eta^{(j)}$, and then find $\hat{u}^{(j)}$. The error is given by $\mathcal{L}(u^{(j)},\hat{u}^{(j)})$.

Recall that through the Bayesian setting of the inverse problem
\begin{equation*}
y = G(u)+\eta,
\end{equation*}
we view $u$ and $\eta$ as independent random variables distributed by $\mathcal{N}(0,\lambda^{-1}C_0)\otimes \mathcal{N}(0,\Gamma)$. To get access to training data, we can draw $(u^{(j)})_{j=1}^J$ samples of the prior distribution, $(\eta^{(j)})_{j=1}^J$ realizations of the noise and compute
\begin{equation*}
y^{(j)} = G(u^{(j)})+\eta^{(j)}.
\end{equation*}

To incorporate those ideas of learning the regularization parameter from \textit{training data}, we will give an alternative view point of EKI with perturbed observations. Instead of considering perturbations directly to the true observation as in \eqref{eq:dataT}, we will now view the perturbation as producing training data in each iteration. Furthermore, instead of computing an optimal regularization parameter only at the beginning of the methods, we assume in each iteration that our current ensemble of particles represents current prior information in the form of an empirical distribution
\begin{equation*}
\mu_n = \frac{1}{J}\sum\limits_{j=1}^J \delta_{u_n^{(j)}}.
\end{equation*}

\subsubsection{Linear setting}
\label{subsec:adaptlinear}
We view $(u,\eta)\sim \mu_n\otimes \mathcal{N}(0,\Gamma)$ and compute
\begin{equation*}
\widehat{\lambda}_{n+1}^J = \underset{\lambda}\argmin\ \E_{(u,\eta)}[\|(A^\top\Gamma^{-1}A+\lambda C_0^{-1})^{-1}A^\top\Gamma^{-1}(Au+\eta)-u\|^2],
\end{equation*}
where we approximate
\begin{align*}
\E_{(u,\eta)}[\|(A^\top\Gamma^{-1}A+\lambda C_0^{-1})^{-1}A^\top\Gamma^{-1}(Au+\eta)-u\|^2] \\
\approx \frac1J \sum\limits_{j=1}^J\|(A^\top\Gamma^{-1}A+\lambda C_0^{-1})^{-1}A^\top\Gamma^{-1}y_{n+1}^{(j)}-u_n^{(j)}\|^2.
\end{align*}

Our training data is produced by perturbing the particles mapped by the forward operator,
\begin{equation}
\label{eq:train}
y^{(j)}_{n+1} = G(u_n^{(j)}) + \eta^{(j)}_{n+1}.
\end{equation}

Following the ideas of \cite{AMOS16,CE17} we now employ a way to update the regularization parameter ${\lambda}_n$ in each iteration where we will do gradient descent in each update step w.r.t.~the loss function
$$ \frac1J \sum\limits_{j=1}^J\|(A^\top\Gamma^{-1}A+\lambda_n C_0^{-1})^{-1}A^\top\Gamma^{-1}y_{n+1}^{(j)}-u_n^{(j)}\|^2,$$
depending on ${\lambda}_n$. To do so we will make use of an error defined as
\begin{equation}
\label{eq:err_train}
v^{(j)}_{n+1}({\lambda}) := T_{{\lambda}}(y_{n+1}^{(j)}) - u^{(j)}_n,
\end{equation}
which represents the difference of the current particle $u_n^{(j)}$ to the minimizer of the Tikhonov regularized loss function. With this we use our particle system to construct training data which we can use to learn the regularization parameter $\lambda$ adaptively.

From \eqref{eq:err_train} we can define the loss function
\begin{equation}
\label{eq:f}
f_{n+1}({\lambda}) := \frac{1}{J} \sum^{J}_{j=1} \frac12\| v^{(j)}_{n+1}({\lambda}) \|^2,
\end{equation}
where for simplicity we drop the dependence of $n$ and $j$. To {do a gradient descent} step w.r.t.~\eqref{eq:f} we need to compute its derivative which means we also need to compute the derivative of \eqref{eq:err_train}. This will be important for the implementation when we construct our numerical examples. 
To proceed we compute both $f'({\lambda})$ and $v'({\lambda}) $. To aid we use the following formula,
\begin{equation}
\label{eq:relation}
\frac{d \|v_{n+1}^{(j)}({\lambda})\|^2}{d {\lambda}} = (v^{(j)}_{n+1})^\top({\lambda}) \cdot  (v^{(j)}_{n+1})'({\lambda}),
\end{equation}
and
\begin{align*}
{(v_{n+1}^{(j)})'({\lambda}) = \frac{d u_{n+1}^{(j)}({\lambda})}{d {\lambda}}}&{= \frac{d(A^\top\Gamma^{-1}A+{\lambda} C_0^{-1})^{-1}A^\top\Gamma^{-1}y^{(j)}_{n+1}}{d{\lambda}},} \\
&=  \frac{d(A^\top\Gamma^{-1}A+{\lambda} C_0^{-1})^{-1}}{d{\lambda}} A^\top\Gamma^{-1}y^{(j)}_{n+1},\\
&{= -(A^\top\Gamma^{-1}A+{\lambda} C_0^{-1})^{-1}C_0^{-1}(A^\top\Gamma^{-1}A+\lambda C_0^{-1})^{-1}} \\ &{\cdot A^\top \Gamma^{-1}y^{(j)}_{n+1}.}
\end{align*}
Therefore, using the expression for the derivative of $v$, we can now express the derivative of \eqref{eq:relation} as
\begin{align*}
\frac{d \|v_{n+1}^{(j)}({\lambda})\|^2}{d {\lambda}} = &-\left((A^\top \Gamma^{-1}A + {\lambda} C_0^{-1})^{-1}A^\top \Gamma^{-1} y^{(j)}_{n+1} - u^{(j)}_n\right) (A^\top\Gamma^{-1}A+{\lambda} C_0^{-1})^{-1} \\ &\cdot C_0^{-1}
 (A^\top\Gamma^{-1}A+{\lambda} C_0^{-1})^{-1} A^\top \Gamma^{-1}y^{(j)}_{n+1}.
\end{align*}

\subsubsection{Nonlinear setting}
\label{subsec:adaptnonlinear}

While we have used the closed expression of the minimizer of the Tikhonov functional, we are not able to use this expression in the nonlinear setting. To avoid this issue, we will present another way of choosing the regularization parameter adaptively.

For our first method, we will make use of the data-driven regularization approach \cite{AMOS16,CSTW20}. In particular, we consider the bilevel optimization problem in a general nonlinear setting with Tikhonov regularization, i.e.
\begin{equation}
\begin{split}
\widehat\lambda &\in \underset{\lambda>0}{\argmin} \ \mathbb{E}_{\mu(U,Y)}[|R_\lambda(Y)-U|^2],\\
R_\lambda(Y)&:= \underset{u\in\R^{d_u}}{\argmin} \ \frac12\|G(u)-Y\|_\Gamma^2+\frac12\|u\|_{C_0}^2,
\end{split}
\end{equation}
We assume that we have given the current ensemble of particles $(u_n^{(j)})_{j=1}^J$, which  represent current information about the unknown true parameter $u^\dagger$. Furthermore, assume that we have given a current regularization parameter ${\lambda}_n$. The method is similarly to the previous one based on learning the regularization parameter over time with the help of artificial training data $(u_n^{(j)},y_{n+1}^{(j)})$, constructed in the prediction step. The update step \eqref{eq:updateT} pushes the current ensemble to $(u_{n+1}^{(j)})_{j=1}^J$ in order to get closer to the minimizer of the Tikhonov functional, i.e. into direction of $R_{\lambda_{n+1}}(y_{n+1}^{(j)})$. In the linear setting we have chosen ${\lambda}_{n+1}$ minimizing the difference. Using an empirical approximation we aim to choose $\lambda_{n+1}$ minimizing the difference
\[ v_{n+1}(\lambda)= \frac1J \sum\limits_{j=1}^J \frac12 |u_n^{(j)}-R_{{\lambda}}(y_{n+1}^{(j)})|^2.\]
Since the forward model is assumed to be nonlinear, in general we are not able to compute the Tikhonov solution $R_{{\lambda}}(y_{n+1}^{(j)})$ in closed form. To overcome this issue, we propose to introduce a linearization around the mean of the particle system in each iteration. In particular, we approximate the forward model $G(\cdot)$ by
$$G(u) = G(\bar u_n) + {\rm D}G(\bar u_n) (u - \bar u_n),$$
where ${\rm D}G$ denotes the derivative of $G$ w.r.t.~$u$. Hence, defining $A_{\rm{apprx}} = {\rm D}G(\bar u_n)$ and $a_{\rm{apprx}} = G(\bar u_n) -{\rm D}G(\bar u_n) \bar u_n$, we approximate $R_\lambda(Y)$ by
\begin{align*}
\widehat R_\lambda(Y) &=  \underset{u\in\R^{d_u} }{\argmin} \ \frac12\|A_{\rm{apprx}}-(Y-a_{\rm{apprx}})\|_\Gamma^2+\frac12\|u\|_{C_0}^2\\ &= (A_{\rm{apprx}}^\top \Gamma^{-1}A_{\rm{apprx}}+\lambda C_0^{-1})^{-1}A_{\rm{apprx}}^\top \Gamma^{-1}(Y-a_{\rm{apprx}}).
\end{align*}
Given the traing data produced by perturbing the particles mapped by the forward model 
$$y_{n+1}^{(j)} = G(u_n^{(j)})+\eta_{n+1}^{(j)},$$
we apply compute $\tilde y_{n+1}^{(j)} = y_{n+1}^{(j)} - a_{\rm{apprx}}$ and apply the previously introduced approach in Section~\ref{ssec:bilevel_linear} for linear forward models with $A=A_{\rm{apprx}}$, i.e.
$$\lambda_{n+1} = \lambda_n - \gamma_n \cdot f'(\lambda_n),$$
where 
\begin{align*}
f'(\lambda_n)= &-\left((A_{\rm{apprx}}^\top \Gamma^{-1}A_{\rm{apprx}} + {\lambda} C_0^{-1})^{-1}A_{\rm{apprx}}^\top \Gamma^{-1} \tilde y^{(j)}_{n+1} - u^{(j)}_n\right)\\ &\cdot (A_{\rm{apprx}}^\top\Gamma^{-1}A_{\rm{apprx}}+{\lambda} C_0^{-1})^{-1} \\ &\cdot C_0^{-1}
 (A_{\rm{apprx}}^\top\Gamma^{-1}A_{\rm{apprx}}+{\lambda} C_0^{-1})^{-1} A_{\rm{apprx}}^\top \Gamma^{-1}\tilde y^{(j)}_{n+1}.
\end{align*}

\begin{algorithm}[H]
\caption{Tikhonov EKI: nonlinear adaptive learning regularization}
\label{alg:TEKIadapt_nonlinear}
\begin{algorithmic}[1]
\State \textbf{Input} $\{u^{(j)}_0\}_{j=1}^{J} \sim \mathcal{N}(0,C_0)$,  $J\ge2$, $\lambda_0>0$

For{$n=0,\ldots,N-1$},
\State linearize 
$$A_{\rm{apprx}} = {\rm D}G(\bar u_n),\quad a_{\rm{apprx}} = G(\bar u_n) -{\rm D}G(\bar u_n) \bar u_n.$$
\State construct training data $$\widetilde y_{n+1}^{(j)} = G(u_{n}^{(j)})+\eta_{n+1}^{(j)}-a_{\rm{apprx}}.$$
\State compute
$$
\lambda_{n+1} = \lambda_n - \gamma_n \cdot f'(\lambda_n).
$$
\State update the ensemble of particle by TEKI
$$
u^{(j)}_{n+1} = u^{(j)}_n + B^{up}_{n} \big(B^{pp}_n + \Sigma({\lambda}_{n+1})\big)^{-1}\bigg(z - \begin{pmatrix} y_{n+1}^{(j)} \\ u_n^{(j)}\end{pmatrix}\bigg).
$$
\State \textbf{end}
\end{algorithmic}
\end{algorithm}

\begin{figure}[h!]
\includegraphics[width=0.90\textwidth]{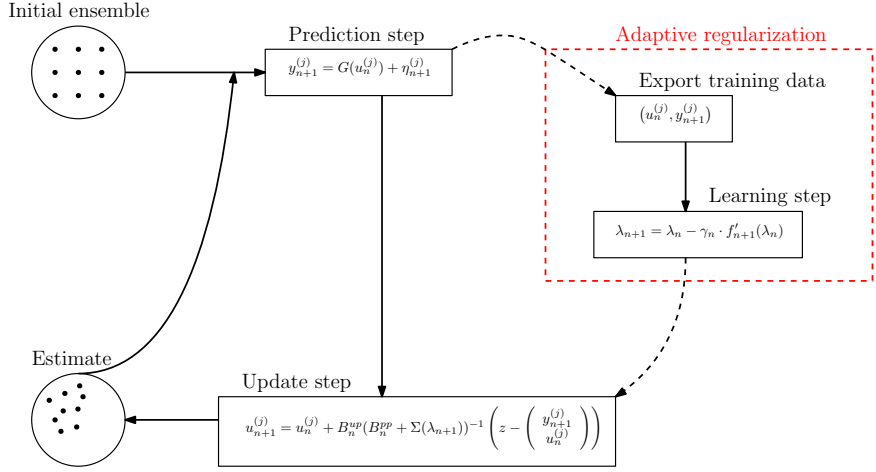}
\caption{Representation of adaptive regularized ensemble Kalman inversion with the inclusion of data-driven learning.}
\label{fig:adaptive_reg_dd}
\end{figure} 

{
\begin{remark}
An important question to ask is how to choose the step size $\gamma_n$. It is well known in optimization that it can beneficial to choose a non-fixed decreasing step size for maximum learning. Our choice for $\gamma_n$ will be based on the Armijo rule to ensure that we have a correct descent direction at every iteration.
\end{remark}
\begin{remark}
In the case of a linear forward model $G(\cdot) = A\cdot$, the resulting linearization is exact with $A_{\rm{apprx}} = A$ and $a_{\rm{apprx}}=0$.  Furthermore, we note that the linearization is only applied to learn the regularization parameter, while the TEKI update remains in the nonlinear setting. Applying a linearization in order to update the regularization parameter might be sufficient whereas the original inverse problem needs to be treated more carefully for nonlinear forward models.
\end{remark}
}

\subsection{MAP formulation}\label{ssec:MAP}

Another more simplistic way to find the parameter is through the Bayesian framework. Given some initial $\lambda$ which we can define through a prior of the form $\lambda\sim \mathcal{U} [0,M]$, then the parameter estimation is defined as 
\[
\argmax_{u\in\R^{d_u} ,\lambda\in(0,M)}\frac{1}{\sqrt{ \det(2\pi \Gamma) }}\exp(-\frac12\|G(u)-y\|_\Gamma^2)  \frac{1}{\sqrt{ \det(2\pi \lambda^{-1} C_0) }}\exp(-\frac12\|u\|_{\lambda^{-1}C_0}^2).
\]
Or alternatively this can be viewed as taking the logarithm and ignoring the constants
\[
\argmin_{u\in\R^{d_u} ,\lambda\in(0,M)} \frac12\|G(u)-y\|_\Gamma^2+\frac\lambda2\|u\|_{C_0}^2-\frac{d_u}{2}\log \lambda,
\]
where $d_u$ denotes the dimension of $u$. Notice that when $u$ is given, the minimizer of $\lambda$ is explicitly found using critical point
\[
\lambda_\ast=\left(\frac1{d_u} \|u\|^2_{C_0}\right)^{-1}. 
\]
Viewing each update step of the Tikhonov EKI as step into direction of the MAP estimator, leads to the following update
\[
\lambda_{n+1}=\left(\frac1{d_u}\|\bar{u}_n\|^2_{C_0}\right)^{-1},\quad \text{or}\quad \lambda_{n+1}=\left(\frac1{Jd_u}\sum_{j=1}^J\|u^{(j)}_n\|^2_{C_0}\right)^{-1}. 
\]
{It can occur, that $\lambda$ will eventually go out the feasible set $[0,M]$. Therefore, in order for it to remain, we will introduce a projection operator
$\mathcal{P}_{[0,M]}: \R \to (0,M]$ that ensures $\lambda$ goes back in the feasible set. }

\begin{algorithm}[H]
\caption{Tikhonov EKI: adaptive regularization using the MAP}
\label{alg:TEKIadapt_MAP}
\begin{algorithmic}[1]
\State \textbf{Input} $\{u^{(j)}_0\}_{j=1}^{J} \sim \mathcal{N}(0,C_0)$,  $J\ge2$, $\lambda_0>0$

\For{$n=0,\ldots,N-1$},
\State compute
$$
\lambda_{n+1}=\mathcal P_{[0,M]}\left(\left(\frac1{Jd_u}\sum_{j=1}^J\|u^{(j)}_n\|^2_{C_0}\right)^{-1}\right).
$$
\State update the ensemble of particle by TEKI 
$$
u^{(j)}_{n+1} = u^{(j)}_n + B^{up}_{n} \big(B^{pp}_n + \Sigma({\lambda}_{n+1})\big)^{-1}\bigg(z - \begin{pmatrix} y_{n+1}^{(j)} \\ u_n^{(j)}\end{pmatrix}\bigg).
$$
\EndFor
\end{algorithmic}
\end{algorithm}

\subsection{Adaptive covariance EKI}
Our final adaptive method that we introduce in this paper follows closely to the ideas of hierarchical EKI \cite{NKC18,CIRS18}, where {we estimate hyperparameters simultaneously}. To incorporate regularization as prior information into EKI we proceed as follows. Working in a Bayesian setting,  we view $(u,y)$ as jointly distributed random variable, where the solution of the Bayesian inverse problem is given by the posterior distribution 
$$u\mid y\sim \mu({\mathrm d}u) = \frac{1}{Z} \exp(-\frac12\|G(u)-y\|_\Gamma^2)\mu_0({\mathrm d} u).$$ 
Here, $Z=\int_{\R^{d_u} } \exp(-\frac12\|G(u)-y\|_\Gamma^2)\mu_0({\mathrm d} u)$ denotes a normalization constant and $\mu_0$ is the prior distribution. Assuming that $\mu_0$ depends on a hyperparameter $\theta$ gives the possibility to tune the resulting estimate of the Bayesian inverse problem.  Suppose that $\mu_0$ can be represented as Lebesgue density $\mu_0(u,\theta)$, and we have access to prior information on $\theta$ given by the pdf $q_0$, then we are able to compute the posterior distribution w.r.t.~$u$ as well as $\theta$ by 
$$ (u,\theta)\mid y \sim \mu({\mathrm d}(u,\lambda)) = \frac{1}{Z} \exp(-\frac12\|G(u)-y\|_\Gamma^2)\mu_0(u,\theta)q_0(\theta)\,{\mathrm d}(u,\theta).$$
In case we assume a Gaussian prior distribution on $u$ given by
$$\mu_0(u,\lambda) = \frac{1}{\sqrt{\det(2\pi C_0(\theta))}}\exp(-\frac12\|u-m_0(\theta)\|_{C_0(\theta)}^2),$$
we can treat the arising hyperparameters $\theta$ as parameters for choosing the prior mean $m_0(\theta)$ and covariance matrix $C_0(\theta)$. The presented MAP formulation in the previous Section~\ref{ssec:MAP} can be viewed as special case $C_0(\theta)\mapsto \lambda^{-1}C_0$, where $\theta=\lambda\sim\mathcal U((0,M))$ is the only hyperparameter to learn. Recall that in this setting the MAP estimate w.r.t.~$(u,\lambda)$ is given by
\[\argmin_{u\in\R^{d_u} ,\lambda\in(0,M)}\ \frac12\|G(u)-y\|_\Gamma^2+\frac\lambda2\|u\|_{C_0}^2-\frac{d_u}{2}\log \lambda,\]
for which we now propose a two-level optimization method. The first level concerns with the minimization w.r.t.~$u$ applying TEKI as preconditioned gradient descent, while the second level with the minimization w.r.t.~$\lambda$ applying gradient descent. Therefore, we consider the two-level update scheme on the particle system $(u_n^{(j)},\lambda_n^{(j)})_{j=1,\dots,J}$:
\begin{align*}
u^{(j)}_{n+1} &= u^{(j)}_n + B^{up}_{n} \big(B^{pp}_n + \Sigma({\lambda}_{n}^{(j)})\big)^{-1}\bigg(z - \begin{pmatrix} y_{n+1}^{(j)} \\ u_n^{(j)}\end{pmatrix}\bigg),\\
\lambda^{(j)}_{n+1} &= \lambda_n^{(j)} - \frac12\|u_n^{(j)}\|_{C_0}^2+\frac{d_u}2\frac{1}{\lambda_n^{(j)}}.
\end{align*}
{We note that from a conceptual point of view, there is no advantage of applying this two-level scheme over the previously presented Algorithm~\ref{alg:TEKIadapt_MAP}. However, the use of the MAP estimate for $\theta$ (based on the current estimate for the unknown $u$) while being still far away from the true parameters might results in instabilities. Furthermore, the two-level scheme can be straightforwardly generalized to the case of learning the whole covariance structure (see below) or can even be used in a much more general setting with non-Gaussian priors, where no closed form solution for the optimal hyperparameters is available.} Assuming that the prior covariance is given by $C_0(\theta)$ for hyperparameters $\theta\in\R^{{d_u}_\theta}$ with uniform prior assumption $\theta\sim\mathcal U((0,M)^{{d_u}_\theta})$, the MAP estimate w.r.t.~$(u,\theta)$ is given by
\begin{equation}
\label{eq:minn}
\argmin_{u\in\R^{d_u},\theta\in(0,M)^{{d_u}_\theta}}\ \frac12\|G(u)-y\|_\Gamma^2+\frac12\|u\|_{C_0(\theta)}^2+\frac{1}{2}\log(\det(C_0(\theta))),
\end{equation}
and our two-level update scheme for the particle system $(u_n^{(j)},\theta_n^{(j)})_{j=1,\dots,J}$ is given by
\begin{align}
\label{eq:algo3_1}
u^{(j)}_{n+1} &= u^{(j)}_n + B^{up}_{n} \big(B^{pp}_n + \Sigma(C({\theta}_{n}^{(j)}))\big)^{-1}\bigg(z - \begin{pmatrix} y_{n+1}^{(j)} \\ u_n^{(j)}\end{pmatrix}\bigg),\\
\label{eq:algo3_2}
\theta^{(j)}_{n+1} &= \theta_n^{(j)} - \nabla_\theta\left(\frac12\|u_n^{(j)}\|_{C_0(\theta_n^{(j)})}^2\right)-\nabla_\theta\left(\frac{1}{2}\log(\det(C_0(\theta_n^{(j)}))\right).
\end{align}
{A natural extension of this approach is to learn the whole covariance structure instead of just one parameter corresponding to the scaling of $C_0$, i.e. we are in the following interested in learning (the reciprocals of) the eigenvalues of the prior covariance, which corresponds to learning the weight of the individual dimensions. Assuming an eigen-decomposition of $C_0$ of the form
\begin{equation}\label{eq:eigCov}
C_0=UD(\theta)
U^\top,\quad D(\theta)=\begin{pmatrix} 1/\theta_1 & & \\
&\ddots&\\
& & 1/\theta_{d_u}
\end{pmatrix},
\end{equation}
with orthonormal matrix $U\in\mathbb \R^{d_u \times d_u}$. 
To simplify notation, we assume w.l.o.g. that $C_0$ is a diagonal matrix. This can be always satisfied by reparametrizing the problem in the eigenbasis. The MAP estimate then solves the problem
\begin{equation*}
\argmin_{u\in\R^{d_u},\theta\in \R^{^{{d_u}_\theta}}}\ \frac12\|G(u)-y\|_\Gamma^2+\frac12\langle U^\top u, {(D(\theta))^{-1}}U^\top u\rangle -\frac{1}{2}\log(\prod_{j=1}^{d_u}{\theta_j})\,.
\end{equation*}

In the first level we again apply TEKI, while in the second level we do gradient descent w.r.t to the reciprocals of the eigenvalues of $C_0$ based on the empirical mean of the parameters $u$. Hence, our proposed adaptive algorithm proceeds as follows.  
\begin{algorithm}[H]
\caption{Tikhonov EKI: adaptive regularization via learning the covariance operator}
\label{alg:TEKIadapt_cov}
\begin{algorithmic}[1]
\State \textbf{Input} $\{u^{(j)}_0\}_{j=1}^{J} \sim \mathcal{N}(0,C_0)$,  $J\ge2$, $(\theta_0)_1,\ldots,(\theta_0)_{d_u}>0$
\State set
\[
\Sigma_{0}  =
\begin{bmatrix}
\Gamma & 0\\
0 &   U D(\theta_0) U^\top
\end{bmatrix}.
\]
\For{$n=0,\ldots,N-1$}
\For{$k=1,\ldots,d_u$}
\State transform $v_n^{(j)} = U^\top u_n^{(j)}$, $\bar v_n = \frac1J\sum_{j=1}^J v_n^{(j)}$,
\State update the eigenvalues by
$$(\theta_{n+1})_k = (\theta_n)_k - \frac12(\bar v_n)_k^2+\frac{1}2\frac{1}{(\theta_n)_k},$$
\State set
\[\Sigma_{n+1}  =
\begin{bmatrix}
\Gamma & 0\\
0 & U D(\theta_{n+1}) U^\top
\end{bmatrix}.\]
\EndFor
\State  update the ensemble of particle by TEKI 
$$
u^{(j)}_{n+1} = u^{(j)}_n + B^{up}_{n} \big(B^{pp}_n + \Sigma_n\big)^{-1}\bigg(z - \begin{pmatrix} y_{n+1}^{(j)} \\ u_n^{(j)}\end{pmatrix}\bigg).
$$
\EndFor
\end{algorithmic}
\end{algorithm}
}

 \subsection{Comparison}
An important question, related to above algorithms, is how they compare and what one should expect in practice. Before exploring this in the succeeding section, we note that Algorithm \ref{alg:TEKIadapt_nonlinear} is derived from
\cite{CSTW20,CE17} which has existing theory verifying this form of regularization. Therefore we expect it to perform well, especially in the linear setting. However this theory does not directly apply to the nonlinear
setting as there is no closed form for the Tikhonov solution, and thus we can not expect the same gains over the other algorithms.  In this case,  we have proposed to use a rough linear approximation on the forward model in order to choose the regularization parameter, but applying TEKI in order to solve the inverse problem through the Tikhonov regularized optimization problem in the nonlinear setting .
For the other two methodologies, Algorithm \ref{alg:TEKIadapt_cov} is similar to Algorithm \ref{alg:TEKIadapt_MAP}, which is based on the MAP formulation. In particular,  the former can be viewed as a special case of the latter, therefore we would expect Algorithm~\ref{alg:TEKIadapt_cov} to perform well, as we are updating an ensemble of particles based on the sample mean and covariances. While for Algorithm~\ref{alg:TEKIadapt_MAP} we consider the direct computation of stationary points for $\lambda$,  due to numerical stability we suggest to apply a gradient descent method for learning the covariance matrix in Algorithm~\ref{alg:TEKIadapt_cov}.
The relevance of learning the whole covariance matrix is illustrated in our nonlinear numerical example, where we estimate the coefficients of \eqref{eq:kl} such that the regularization covariance matrix describes the impact of different coefficients.
\section{Numerical examples}
\label{sec:num}

In this section we numerically test and implement the adaptive strategies discussed in Section \ref{sec:adapt}. As our analysis holds in the linear case, we will test our algorithms on a linear partial differential equation (PDE). To gain further insight we also test our algorithms on a non-linear problem arising from geophysical sciences, that of Darcy flow. We highlight the effect of Tikhonov regularization within EKI with noisy observations and the efficient improvement through our adaptive strategies. 

\subsection{Linear partial differential equation}
Our first set of experiments is to show, with the help an inverse elliptic PDE, that Tikhonov regularization and in particular our presented adaptive strategies improves the stability of EKI as iterative solver for the inverse problem. Throughout our experiments we are interesting in assessing the performance of the TEKI in the noisy case through:
\begin{enumerate}
\item[(i)] Data misfit: $\mathbb{E}\big[\frac1J\sum\limits_{j=1}^J |Au^{(j)} - y |^2\big]$.
\item[(ii)] Tikhonov loss function: $\mathbb{E}\big[\frac{1}{J}\sum\limits_{j=1}^J\mathcal{I}(u^{(j)},\lambda)\big]$. 
\item[(ii)] Residual: $\mathbb{E}\big[\frac1J\sum\limits_{j=1}^J |\widetilde r^{(j)}|^2\big]$. \\
\end{enumerate}

We will use \eqref{eq:updateT} as discretization method of the continuum limit of the algorithm in form of \eqref{eq:TEKI_SDE}.  Our forward model will be a linear 1D elliptic PDE of the form, where we seek a solution 
 $p \in  \mathcal{U}:= H^1_0(D)$ from 
\begin{align}
\label{eq:fwd1}
\frac{d^2p}{dx^2} + p &= u, \ \ \ x \in D, \\
\label{eq:bc1}
p &= 0,  \ \ \ x \in \partial D.
\end{align}
The inverse problem associated with \eqref{eq:fwd1} is the recovery of $u \in \mathcal{X}=L^\infty(D)$ from $K=8$ pointwise measurements of $p$. Our forward solver for \eqref{eq:fwd1} is a piecewise finite element method with mesh size $h=2^{-4}$ over the domain $D=(0,\pi)$. Thus, the forward map $\mathcal{G}(\cdot) = A \cdot$ is linear, where we set $A = \mathcal{O} \circ G$, where $G:\mathcal{X} \rightarrow \mathcal{U}$ is the solution operator and $\mathcal{O}:\mathcal{U} \rightarrow \R^K$ is the observational operator taking measurements at $K$ equidistantly chosen points in $D$, i.e.~$\mathcal O(p) = (p(x_1),\dots,p(x_K))^\top$. We specify the covariance of the noise as $\Gamma = \gamma^2\cdot I$ where $\gamma = 0.1$ and consider the prior assumption 
$$u_0\sim \mathcal{N}(0,C_0),$$ 
where $C_0:=\lambda^\dagger\cdot10\cdot(-\Delta)^{-1}$.
For our numerical examples we will consider the true unknown parameter $$u^\dagger \sim \mathcal{N}(0,\frac{1}{\lambda^\dagger}\cdot C_0),$$
such that our aim will be to handle the difference between prior assumption and underlying ground truth.  For the variance inflation in all of our numerical results we choose the inflation factor to be $\alpha=1/2$ and $R=1$. 
As reference we compute the best possible approximation $T_{\lambda_{\textrm{best}}}(y)$ in the sense that \[\lambda_{\textrm{best}} = \underset{\lambda}\argmin \|T_{\lambda}(y)-u^\dagger\|^2.\]

For example, for a realized ground truth $u^\dagger$ with $\lambda^\dagger=50$, the following Figure~\ref{fig:ex1_regpar} shows the dependence of the residual between underlying ground truth and Tikhonov solution on the regularization parameter $\lambda$.
\begin{figure}[!htb]
	\includegraphics[width=0.5\textwidth]{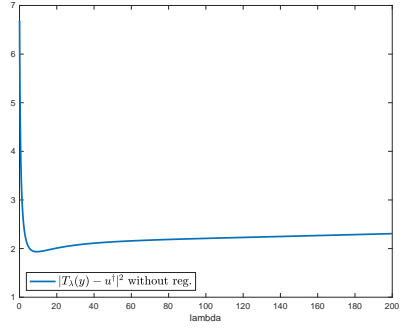}
    \caption{Difference of the Tikhonov minimizer $T_{\lambda}(y)$ to the ``known" unknown true parameter $u^\dagger$ for the case $\lambda^\dagger=50$.}\label{fig:ex1_regpar}
\end{figure} 

For the fixed regularization comparison, we will choose $\lambda=1$, i.e.~in this case we ``trust" the prior assumption. For each regularization algorithm we test two different examples which correspond to different values of $\lambda^\dagger$ used to generate the underlying ground truth. These will be chosen as $\lambda^\dagger \in \{0.04,\ 50\}$. We will keep the number of paths and particles consistent for each example and algorithm, specified as $Q=100$ paths of \eqref{eq:TEKI_SDE} with ensemble size $J=50$.

\subsubsection{Case $\lambda^\dagger=50$}
\begin{figure}[!htb]
	\begin{subfigure}[c]{0.49\textwidth}
	\includegraphics[width=1.1\textwidth]{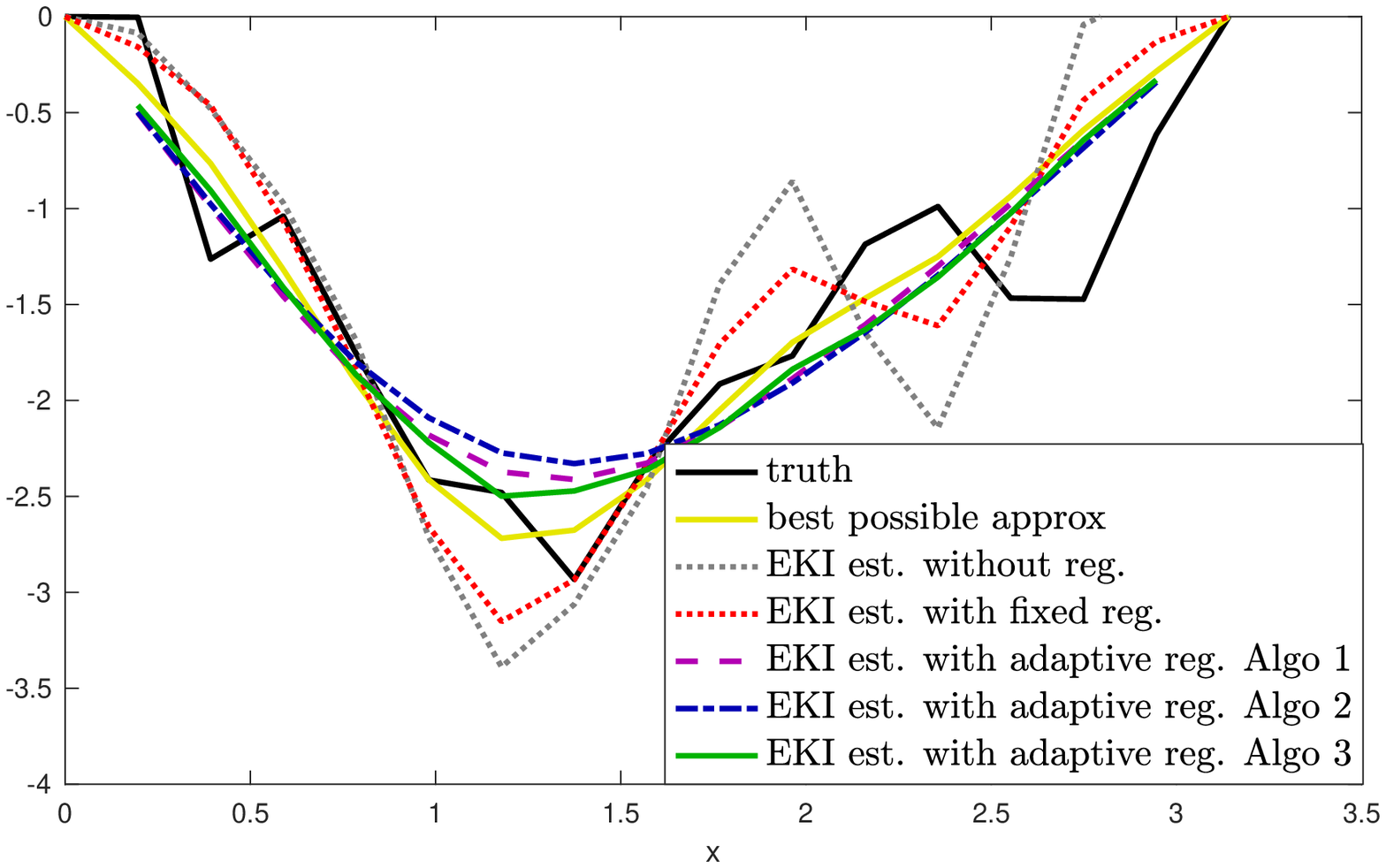}
	\end{subfigure}
	\begin{subfigure}[c]{0.49\textwidth}
	\includegraphics[width=1.1\textwidth]{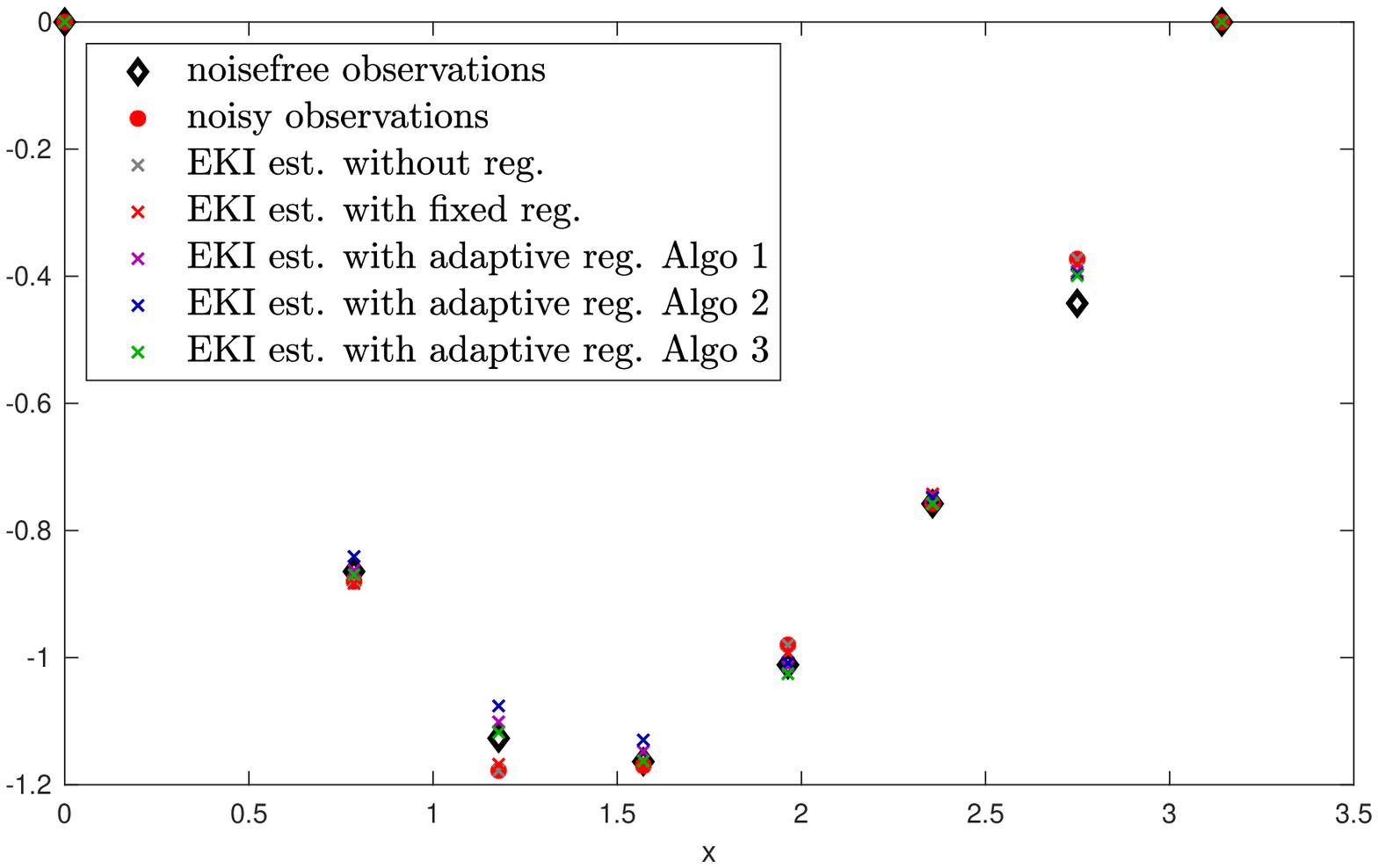}
	\end{subfigure}
    \caption{(T)EKI estimation of the unknown parameter (left) and the corresponding observations (right) for the different presented algorithms in the linear example with $\lambda^\dagger=50$.}\label{fig:ex1_est}
\end{figure} 
\begin{figure}[!htb]
	\begin{subfigure}[c]{0.49\textwidth}
	\includegraphics[width=1.1\textwidth]{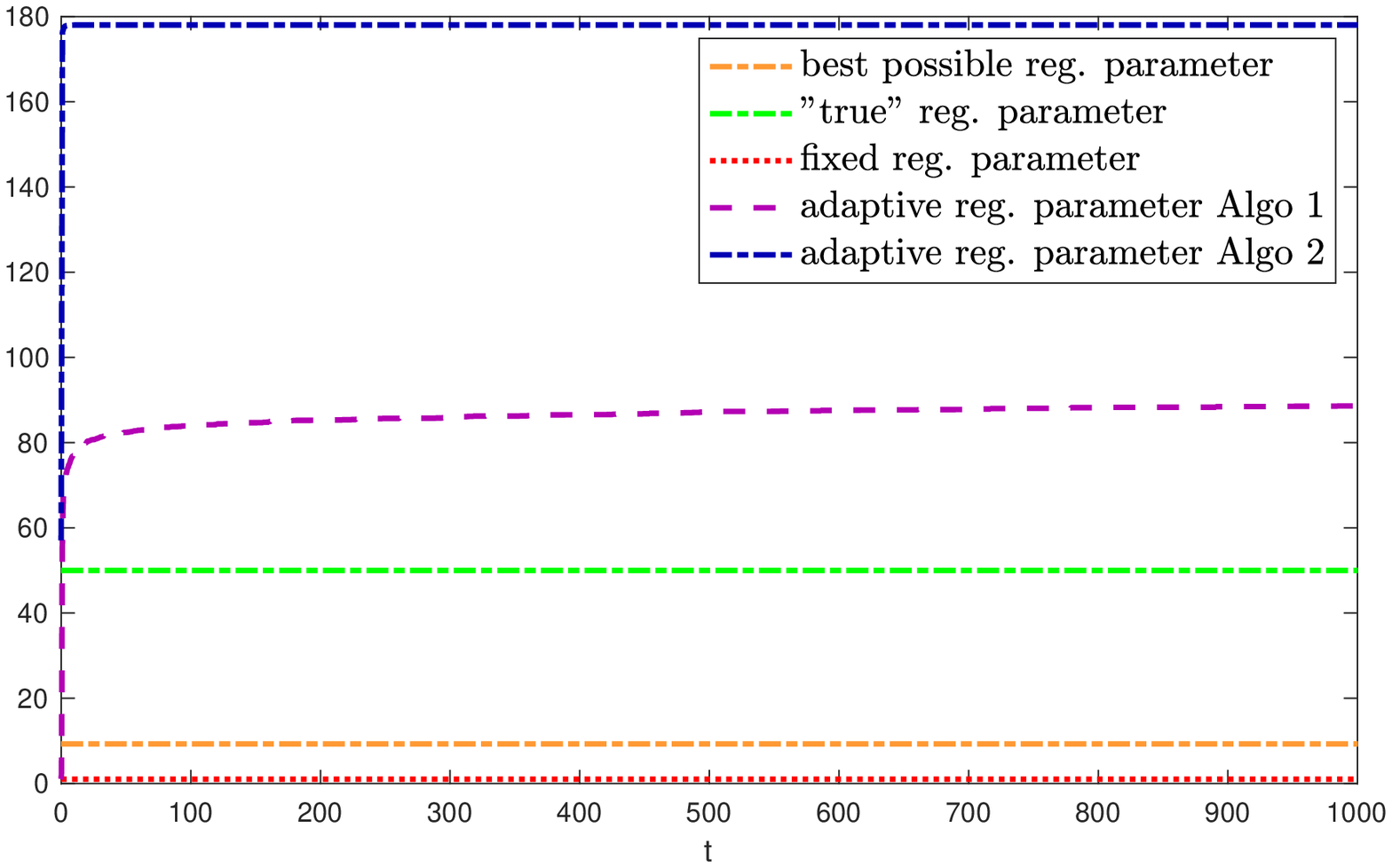}
	\end{subfigure}
	\begin{subfigure}[c]{0.49\textwidth}
	\includegraphics[width=1.1\textwidth]{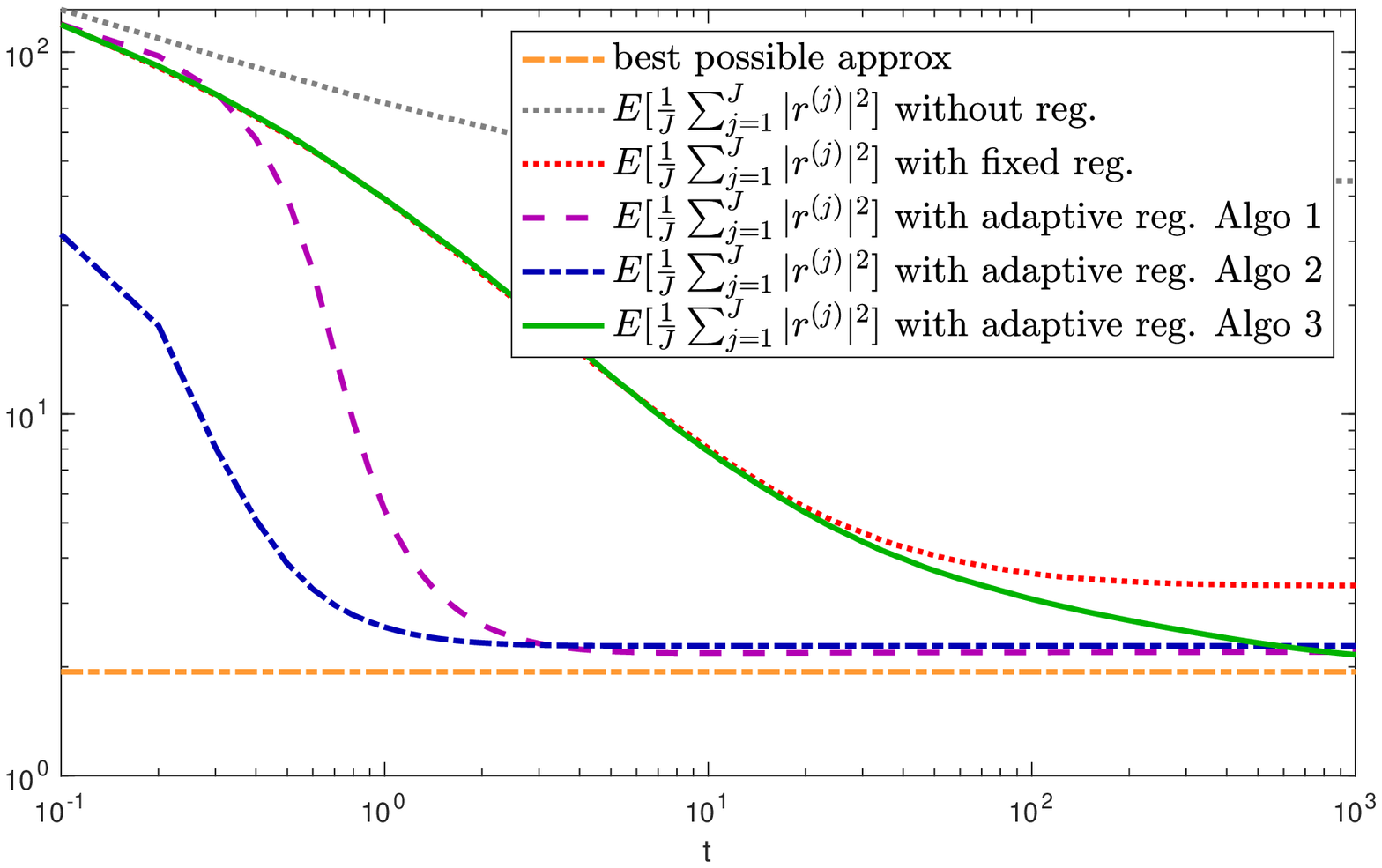}
	\end{subfigure}
    \caption{Learned regularization parameter (left) and the corresponding residuals for the different presented algorithms in the linear example with $\lambda^\dagger=50$.}\label{fig:ex1_regpar_res}
\end{figure} 
\begin{figure}[!htb]
	\begin{subfigure}[c]{0.49\textwidth}
	\includegraphics[width=1.1\textwidth]{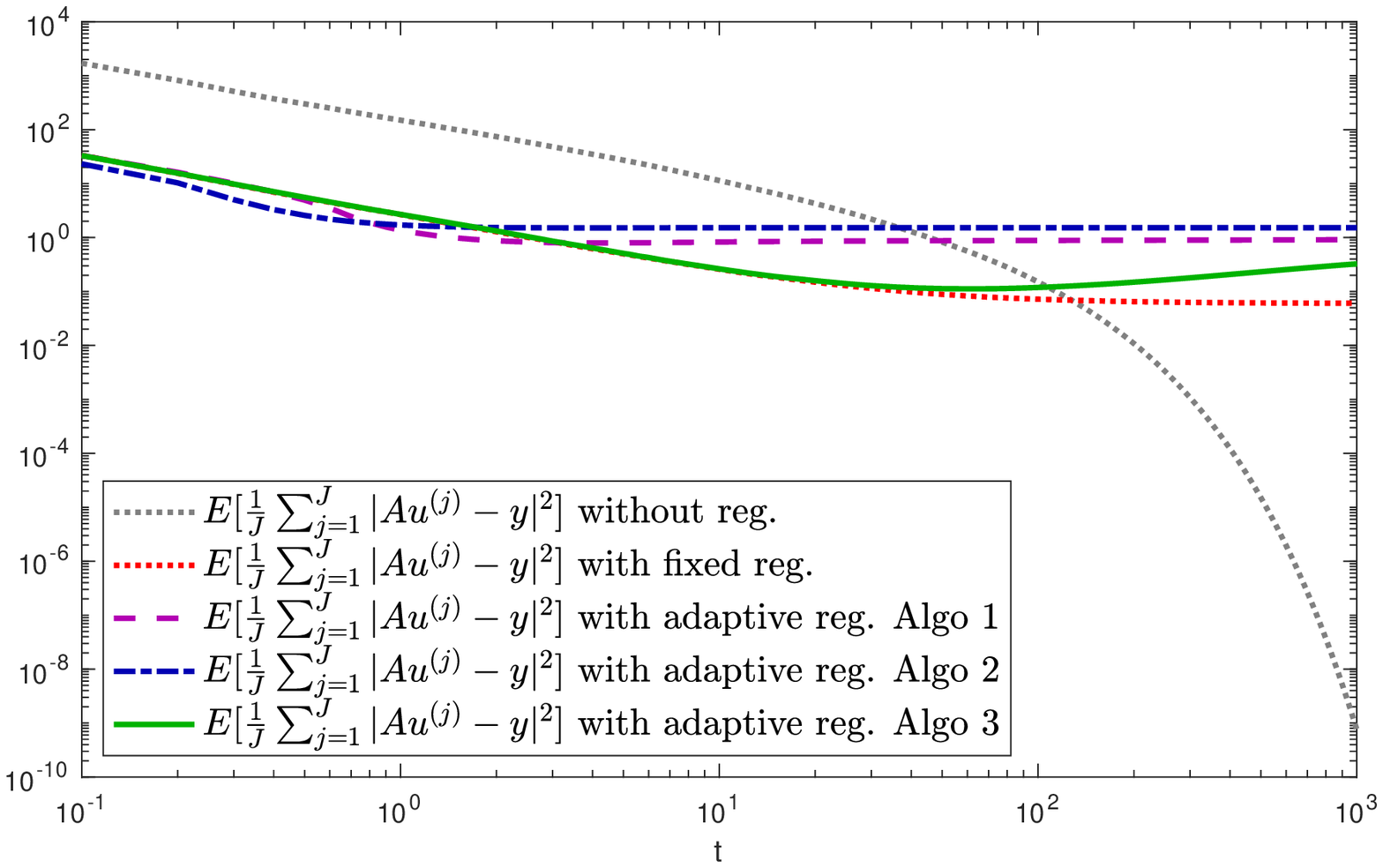}
	\end{subfigure}
	\begin{subfigure}[c]{0.49\textwidth}
	\includegraphics[width=1.1\textwidth]{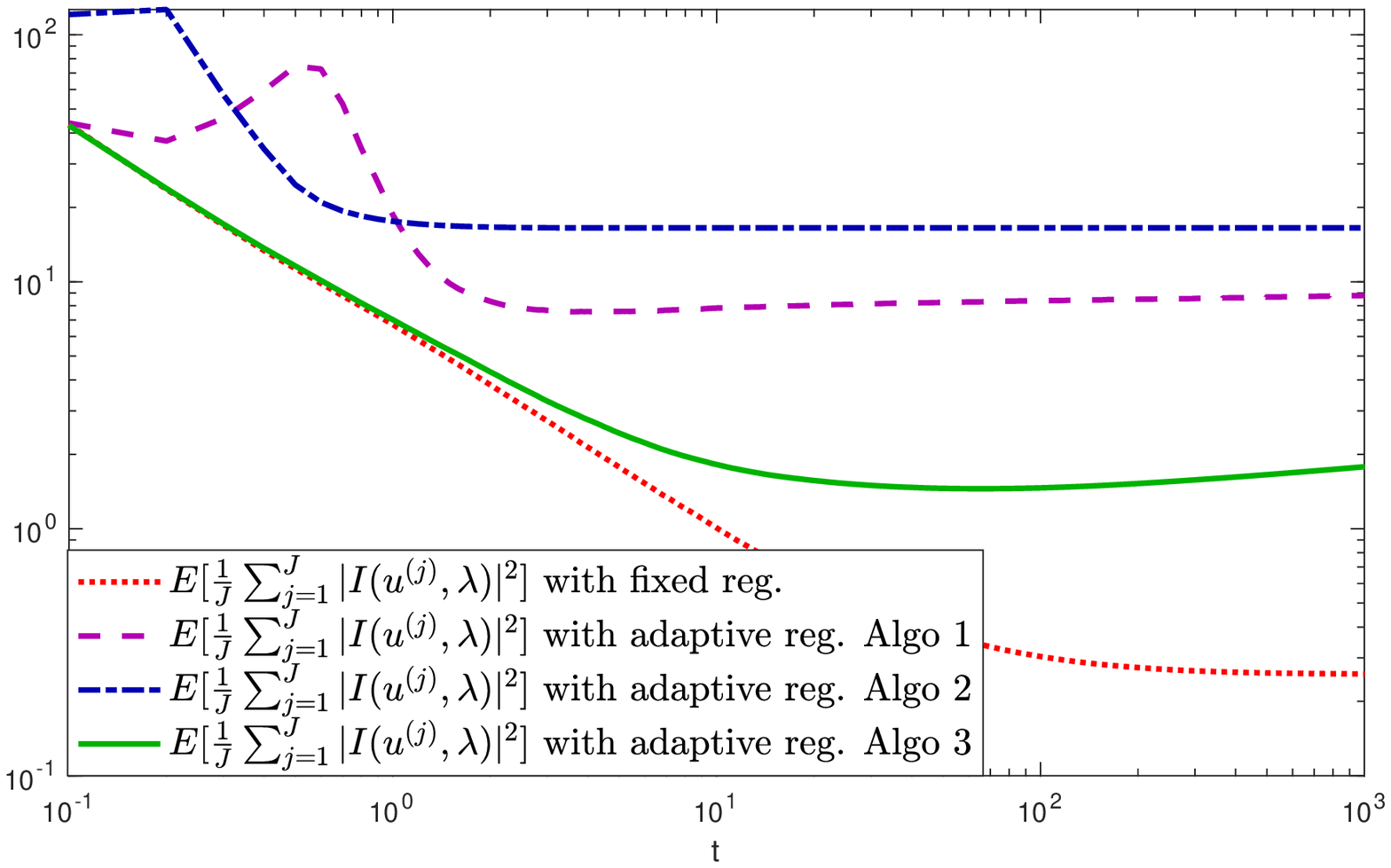}
	\end{subfigure}
    \caption{Data misfit (left) and Tikhonov regularized loss (right) for the different presented algorithms in the linear example with $\lambda^\dagger=50$.}\label{fig:ex1_loss}
\end{figure} 

\bigskip
Our first results from the numerics constitute to the choice ${\lambda}^\dagger=50$ in order to generate the underlying ground truth.  We compare the reconstruction of each algorithm to fixed and no regularization which is shown in Figure \ref{fig:ex1_est} w.r.t.~the parameter space as well as the resulting observations. As we can see EKI with no regularization performs the worst with most variation followed by using a fixed regularization.  The reason for this behaviour is that too much weight lies on the data misfit as a result overfitting of the data occurs, which can also be seen in Figure~\ref{fig:ex1_loss}. The three presented algorithms based on adaptively learning the regularization parameter prevent this overfitting issue and lead to a better approximation in the sense of smaller residuals, see Figure~\ref{fig:ex1_regpar_res}.  We note that the high values for the adaptive regularization parameters seem to be valid which can also be seen from the flat curve w.r.t.~$\lambda$ for the distance between Tikhonov solution and underlying ground truth in Figure~\ref{fig:ex1_regpar}.

\subsubsection{Case $\lambda^\dagger=0.04$}
\begin{figure}[!htb]
	\begin{subfigure}[c]{0.49\textwidth}
	\includegraphics[width=1.1\textwidth]{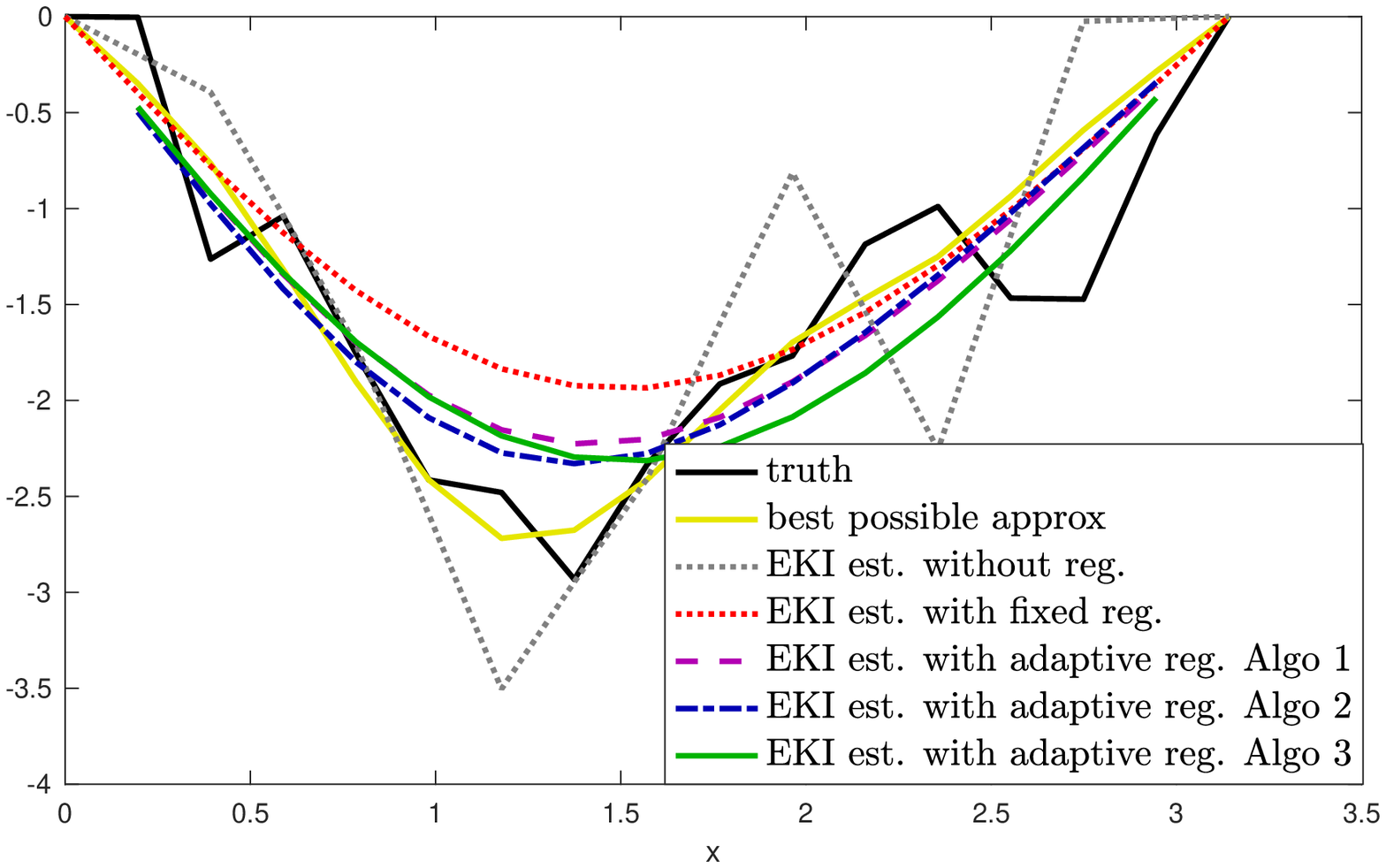}
	\end{subfigure}
	\begin{subfigure}[c]{0.49\textwidth}
	\includegraphics[width=1.1\textwidth]{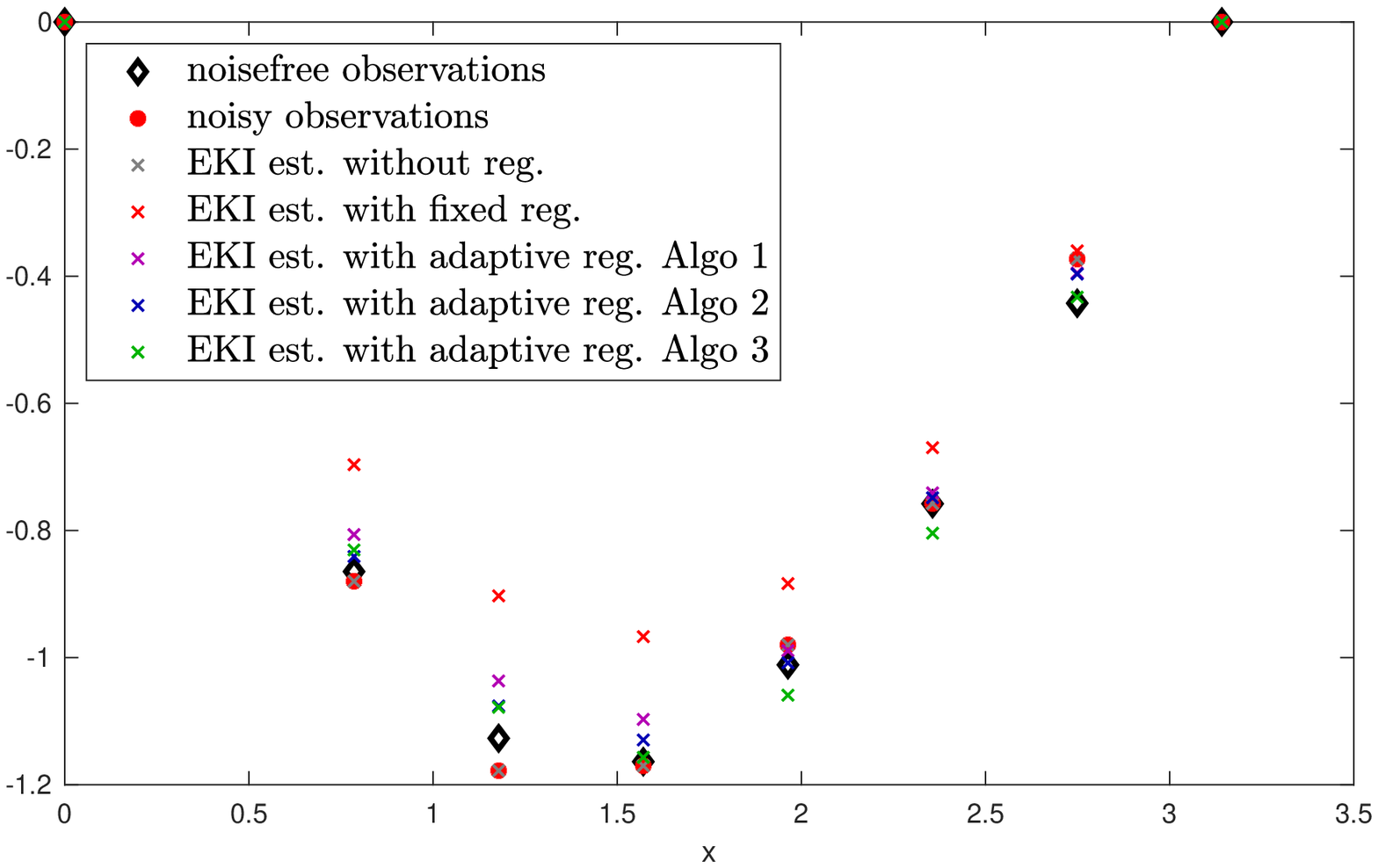}
	\end{subfigure}
    \caption{(T)EKI estimation of the unknown parameter (left) and the corresponding observations (right) for the different presented algorithms in the linear example with $\lambda^\dagger=0.04$.}\label{fig:ex2_est}
\end{figure} 
\begin{figure}[!htb]
	\begin{subfigure}[c]{0.49\textwidth}
	\includegraphics[width=1.1\textwidth]{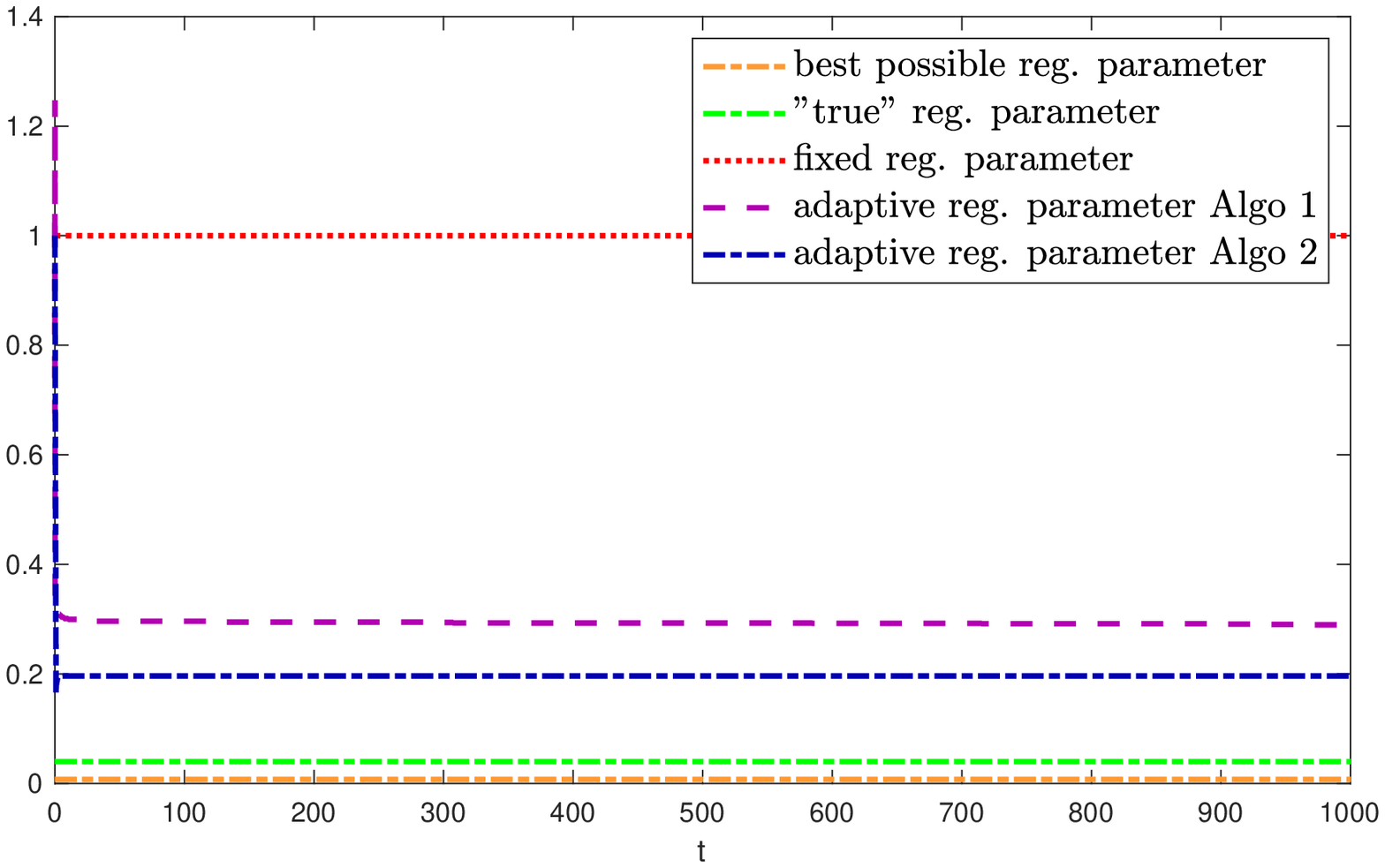}
	\end{subfigure}
	\begin{subfigure}[c]{0.49\textwidth}
	\includegraphics[width=1.1\textwidth]{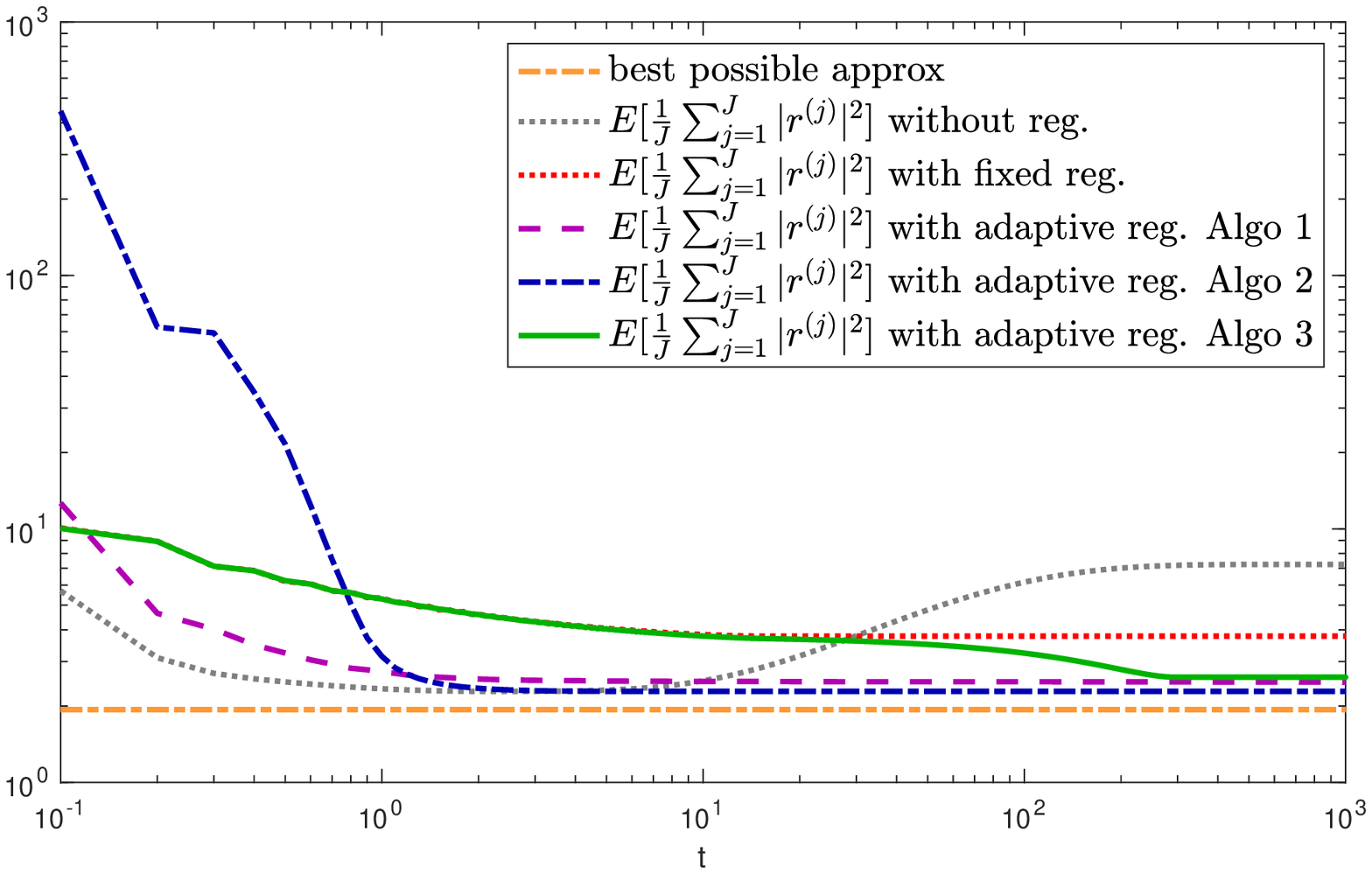}
	\end{subfigure}
    \caption{Learned regularization parameter (left) and the corresponding residuals for the different presented algorithms in the linear example with $\lambda^\dagger=0.04$.}\label{fig:ex2_regpar_res}
\end{figure} 
\begin{figure}[!htb]
	\begin{subfigure}[c]{0.49\textwidth}
	\includegraphics[width=1.1\textwidth]{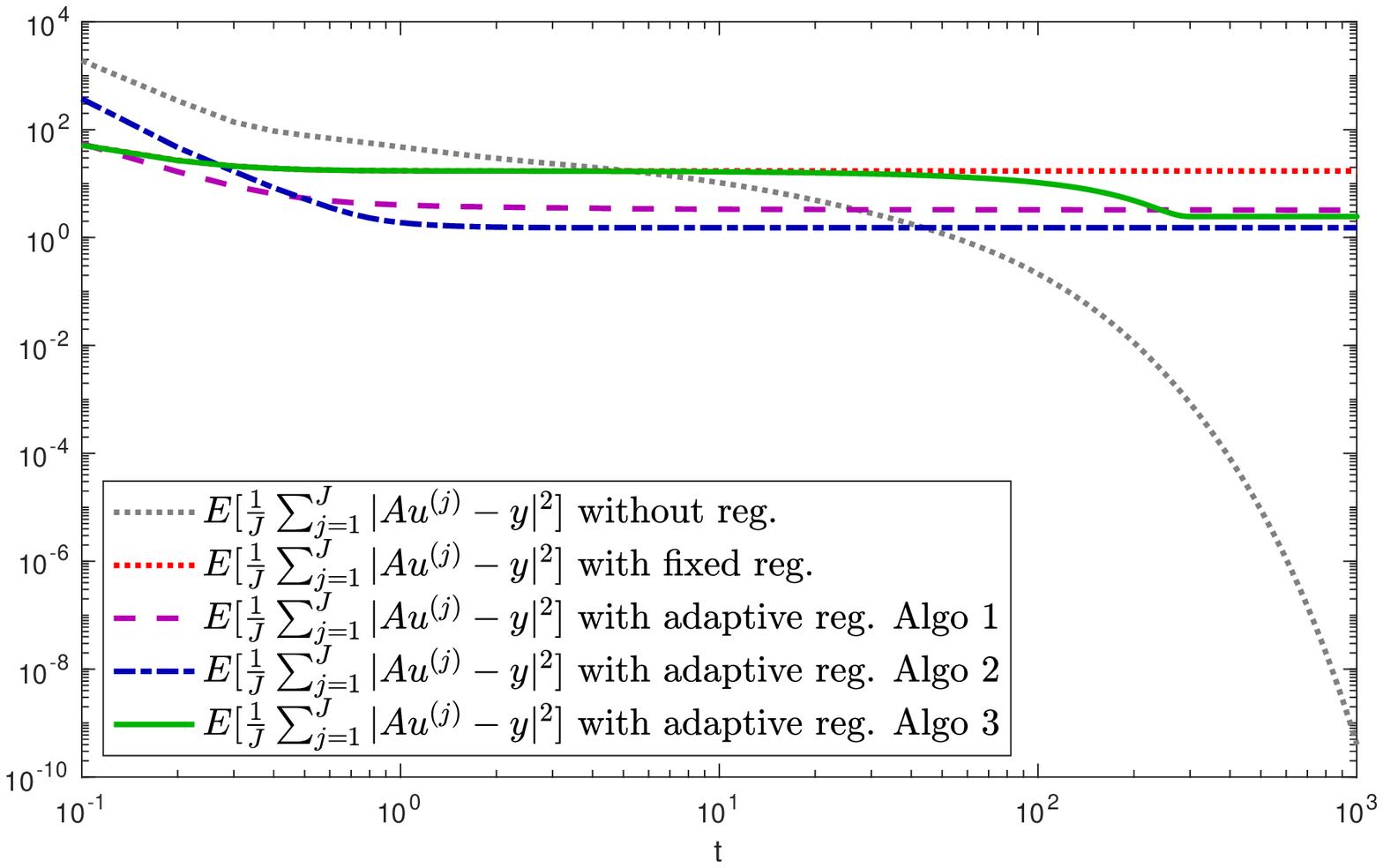}
	\end{subfigure}
	\begin{subfigure}[c]{0.49\textwidth}
	\includegraphics[width=1.1\textwidth]{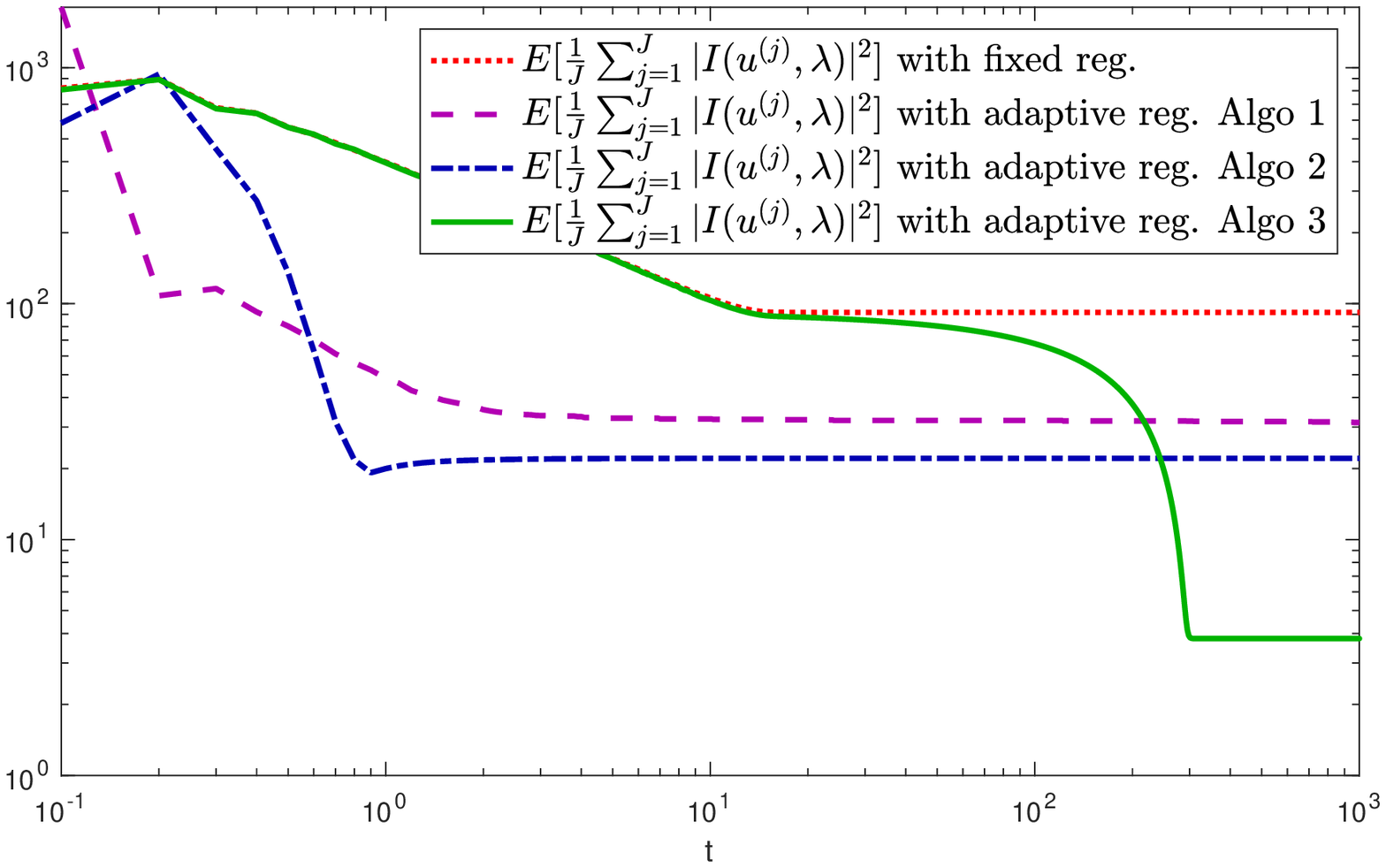}
	\end{subfigure}
    \caption{Data misfit (left) and Tikhonov regularized loss (right) for the different presented algorithms in the linear example with $\lambda^\dagger=0.04$. }\label{fig:ex2_loss}
\end{figure} 
For the second example we modify the true scaling parameter to be $\lambda^\dagger=0.04$.  While EKI without regularization still overfits the data, TEKI with fixed regularization parameter includes too much weight on the regularization and start to oversmooth. This can be seen in the resulting estimations in Figure~\ref{fig:ex2_est} as well as in the learned regularization parameters with corresponding residuals in Figure~\ref{fig:ex2_regpar_res}.
Again, all adapative algorithms perform similarly good and prevent the overfitting without including too much smoothness through regularization. This can also be seen in Figure~\ref{fig:ex2_loss} where we illustrate the data misfit and in particular the overfitting of EKI without regularization and the behavior of the Tikhonov regularized loss function. 
 
 \subsection{Nonlinear example: Darcy flow}
In the second problem,  we consider Darcys flow arising in geophysical sciences. The forward model models the pressure (or hydraulic head) given through the permeability. Mathematically given a source term $f$ and a permeability $\kappa \in L^{\infty}(D)$,  the forward problems is to solve the PDE 
\begin{align}
\label{eq:darcy1}
-\nabla\cdot({\kappa}\nabla p) &= f,   \ x \ \in \ D, \\
p&=0, \ x \  \in \partial D,
\end{align}
for $p \in \mathcal U:=H^1_0(D)$ which is subject to zero Dirichlet boundary conditions. In our implementation we consider a constant source term $f\equiv1$. We set a lognormal prior distribution, i.e.~$\kappa =\exp(u)$ with $u \sim \mathcal{N}(0,\frac{1}{\lambda^\dagger}C_0)$ where we define 
$$
C_0 := \sigma^2(\tau \cdot I - \Delta u)^{-\nu}.
$$
Here, $\sigma^2$ is a scaling constant, $\tau\ge0$ acts as a regularizing shift of the eigenvalues, $\nu > d/2$ is the smoothness of the prior and $\Delta$ is the Laplace operator in 1D. The nonlinear forward map is defined by $\mathcal G(\cdot) = \mathcal{O} \circ G(\cdot)$, where $\mathcal{O}:\mathcal{U} \rightarrow \R^K$ again the observational operator taking measurements at equidistantly chosen points in $D$ and $G:\mathcal{X} \rightarrow \mathcal{U}$ is the solution operator of \eqref{eq:darcy1} which has been numerically approximated by a second-order finite difference method on a uniform mesh of size $h=2^{-6}$. We will simulate our prior through a Karhunen-Lo\`{e}ve expansion \cite{LPS14} of the form 
\begin{equation}
\label{eq:kl}
u^\xi(x) = \sum^{\infty}_{j=1} \sqrt{\frac{1}{\lambda^\dagger}\sigma_j}\xi_j \phi_j(x), \quad \xi \sim \mathcal{N}(0,I),
\end{equation}
where $({\sigma_j},{\phi_j})$ is the eigenbasis of the covariance operator $C_0$.  In order to solve the inverse problem, we consider the task of estimating the coefficients $\xi$,  see also \cite{CIRS18,GHLS19} for more details. Therefore, we truncate \eqref{eq:kl} up to $d$ and introduce the nonlinear map $\mathcal G:\R^{d}\to\R^K$, with $\mathcal G(\xi) = \mathcal O \circ G(u^\xi(\cdot))$ and
$$u^\xi(\cdot) = \sum_{j=1}^d \xi_j\phi_j(\cdot).$$
Hence, our unknown parameter is given by $\xi\in\mathbb R^d$ with a Gaussian prior assumption $\mathcal N(0,\frac{1}{\lambda^\dagger}D_0)$, where 
\begin{equation*}
D_0 = \begin{pmatrix} \sigma_1 & \ & \ \\ \ & \ddots & \ \\ \ & \ & \sigma_d\end{pmatrix}, \quad
\sigma_j = \left(\frac{\sigma^2}{(j+\tau)^2}\right)^\nu,
\end{equation*}
where $\phi_j(x) = \sqrt{2\pi}\sin(2\pi x)$ are the eigenfunctions and $\lambda^\dagger>0$ is the true unknown.
We set $\lambda^\dagger=20$, $\tau=0$, $\sigma=1$, $\nu = 1$ and run  $Q=10$ paths of the TEKI with $J=50$ particles. The measurements noise is set to $\mathcal \mathcal{N}(0,\gamma^2\cdot I)$ with $\gamma=0.01$ and $K=16$ observation points. The physical domain $D=[0,1]$ has been discretized by the equidistant grid $\{\frac{i}{64},\ i=0,\dots,64\}$. For the fixed regularization we choose $\lambda=0.1$, we do not use variance inflation in our implementation and run the TEKI algorithm again based on the discrete version \eqref{eq:updateU}. 

The general trend of the numerics follow similarly to the linear PDE example. Specifically, we can observe again that all of the adaptive schemes lead to significant performance improvements.
This is seen through the reconstruction of the parameter in Figure \ref{fig:nonl_ex_est}.  Here, we can see again overfitting issues for EKI without regularization and TEKI with fixed but too small regularization parameter. This effect is further illustrated in Figure~\ref{fig:nonl_ex_regpar_res}, where we observe that the learned regularization parameters are larger than the fixed chosen one and the residuals improve through adaption of the regularization parameter, as well as in Figure~\ref{fig:nonl_ex_loss}, where we see that the adaptive schemes reduce the data misfit and include more weight on the penalization through Tikhonov regularization. Furthermore, we highlight the improvement of Algorithm~\ref{alg:TEKIadapt_cov} in Figure~\ref{fig:nonl_ex_eigenvalues_cov}, where we observe that the smallest eigenvalue of the regularization covariance matrix decreases much less than the largest eigenvalue 
and hence, leading to an improvement of the recovery due to the various weights w.r.t.~the coefficients in the KL expansion.  This effect can also be seen in the lower residual value compared to Algorithm~\ref{alg:TEKIadapt_nonlinear} and \ref{alg:TEKIadapt_MAP}, see Figure~\ref{fig:nonl_ex_regpar_res} (right).

\begin{figure}[!htb]
	\begin{subfigure}[c]{0.49\textwidth}
	\includegraphics[width=1.1\textwidth]{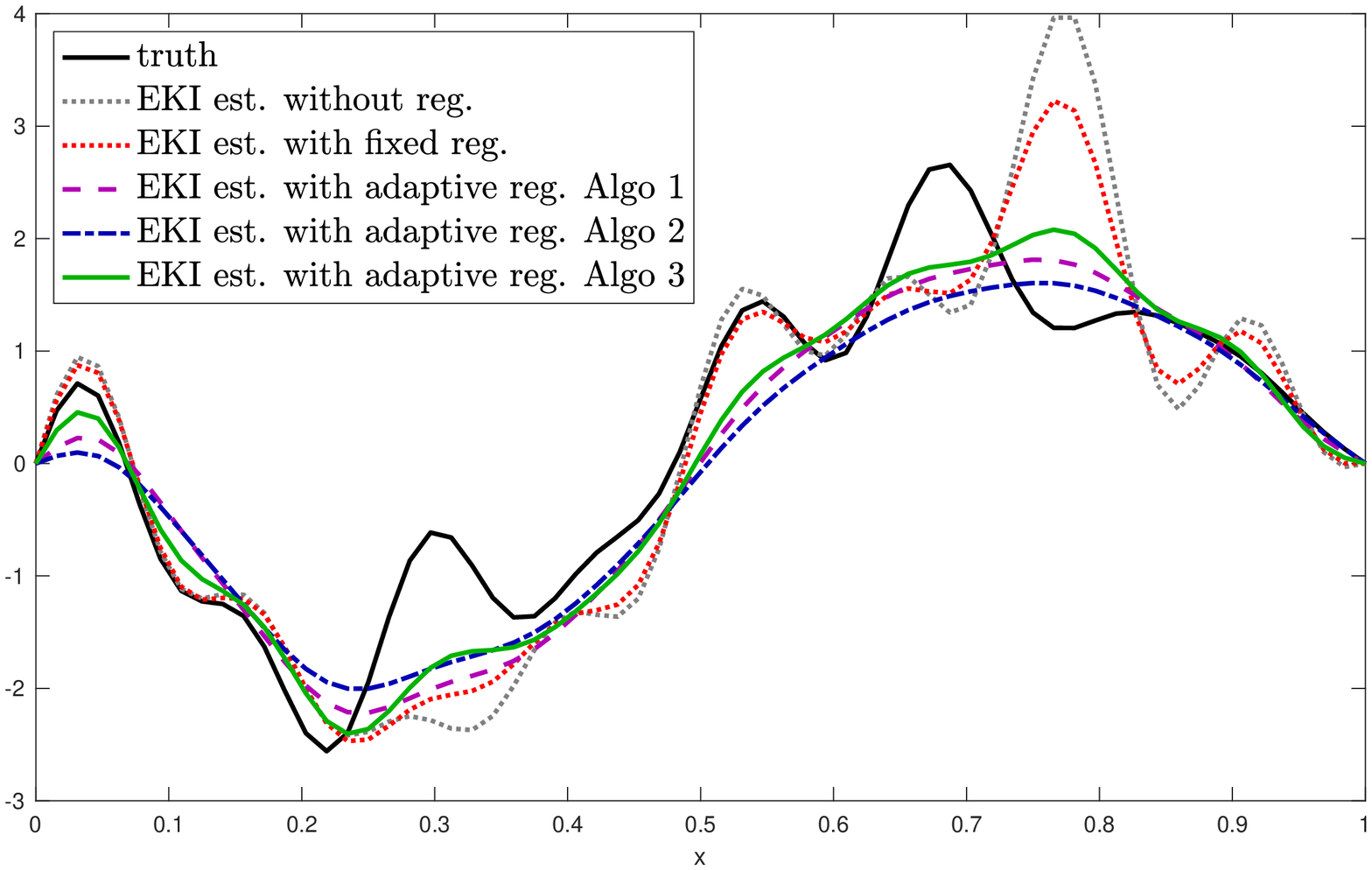}
	\end{subfigure}
	\begin{subfigure}[c]{0.49\textwidth}
	\includegraphics[width=1.1\textwidth]{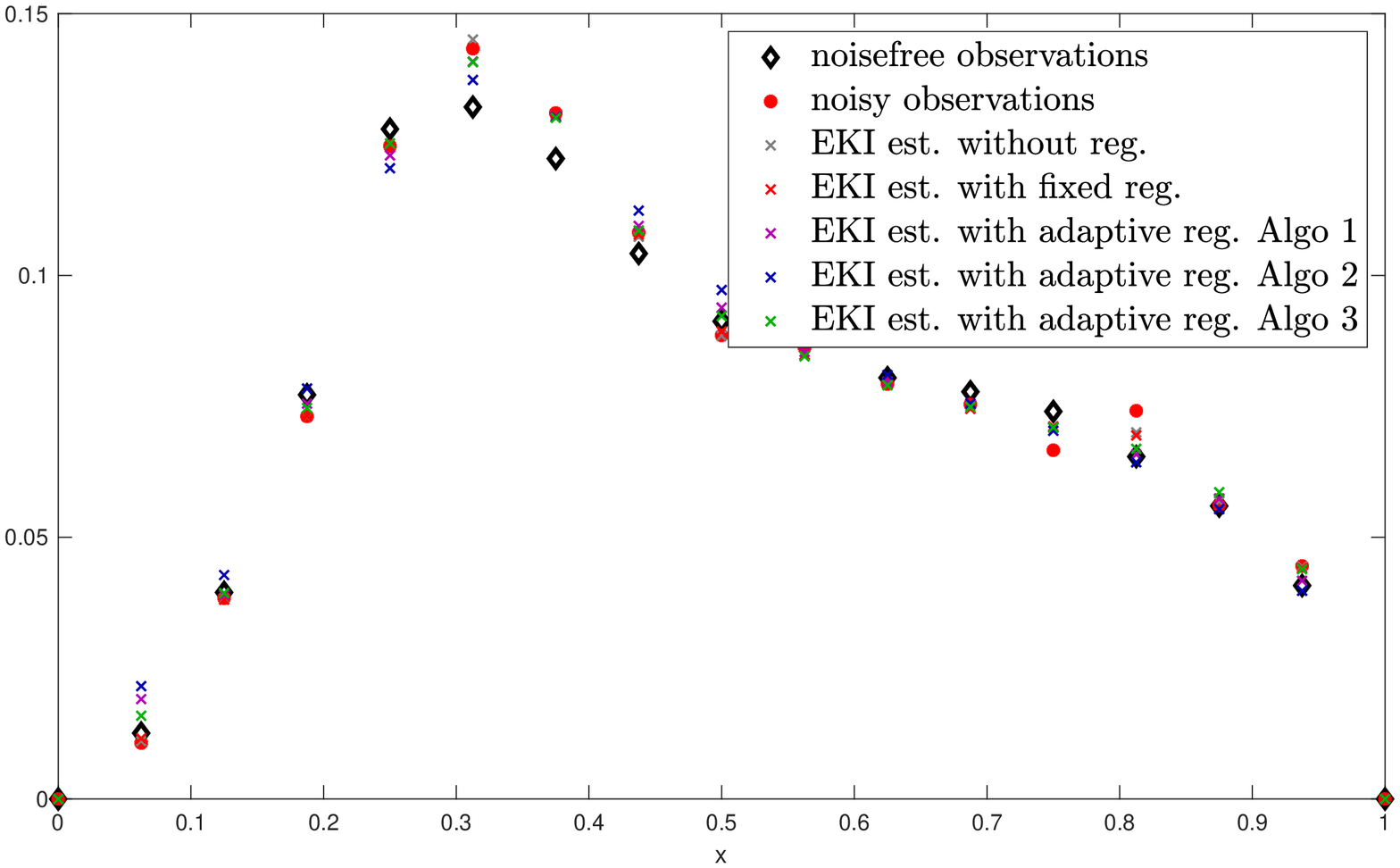}
	\end{subfigure}
    \caption{(T)EKI estimation of the unknown parameter (left) and the corresponding observations (right) for the different presented algorithms in the nonlinear example with $\lambda^\dagger = 20$.}\label{fig:nonl_ex_est}
\end{figure} 

\begin{figure}[!htb]
	\begin{subfigure}[c]{0.49\textwidth}
	\includegraphics[width=1.1\textwidth]{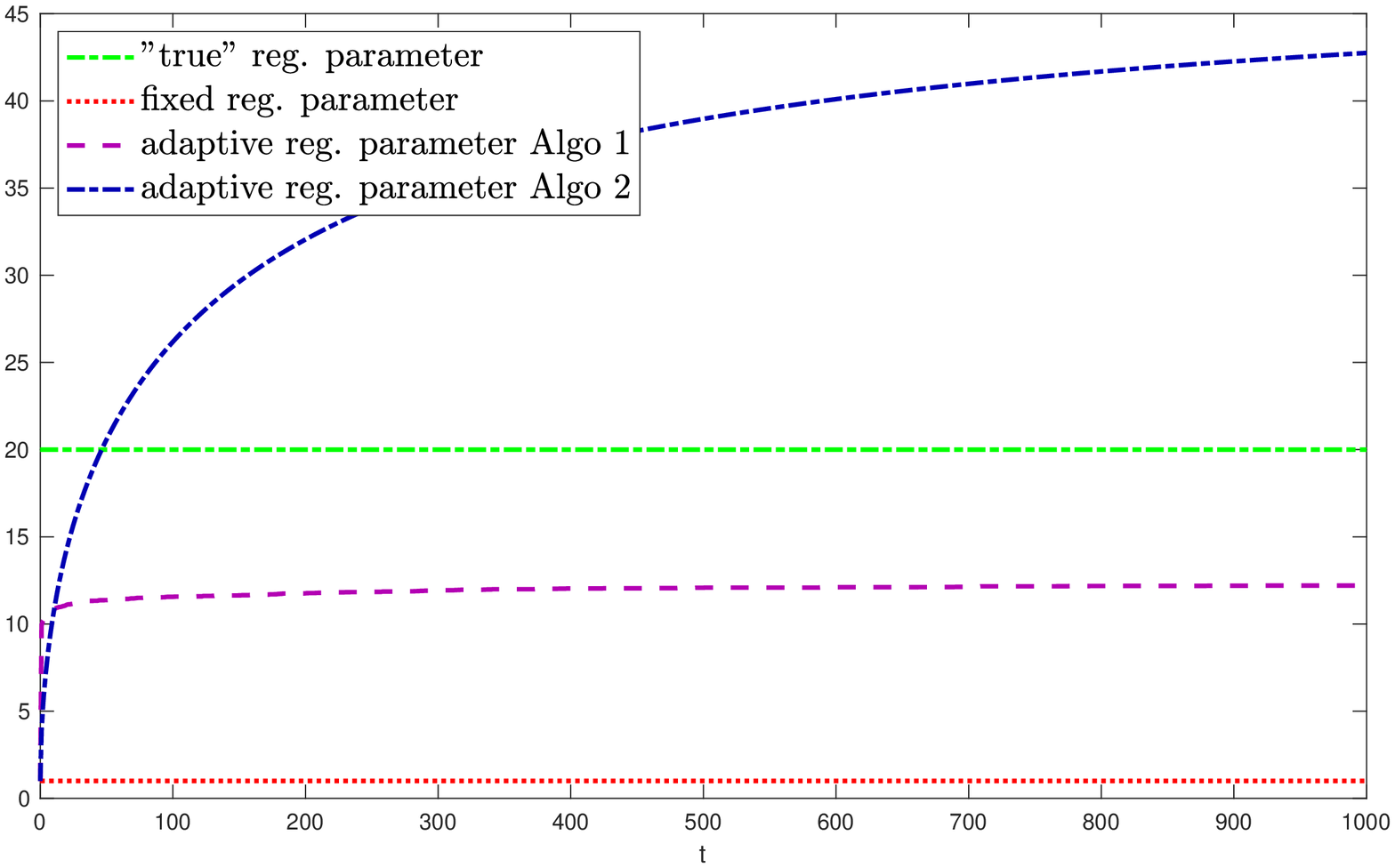}
	\end{subfigure}
	\begin{subfigure}[c]{0.49\textwidth}
	\includegraphics[width=1.1\textwidth]{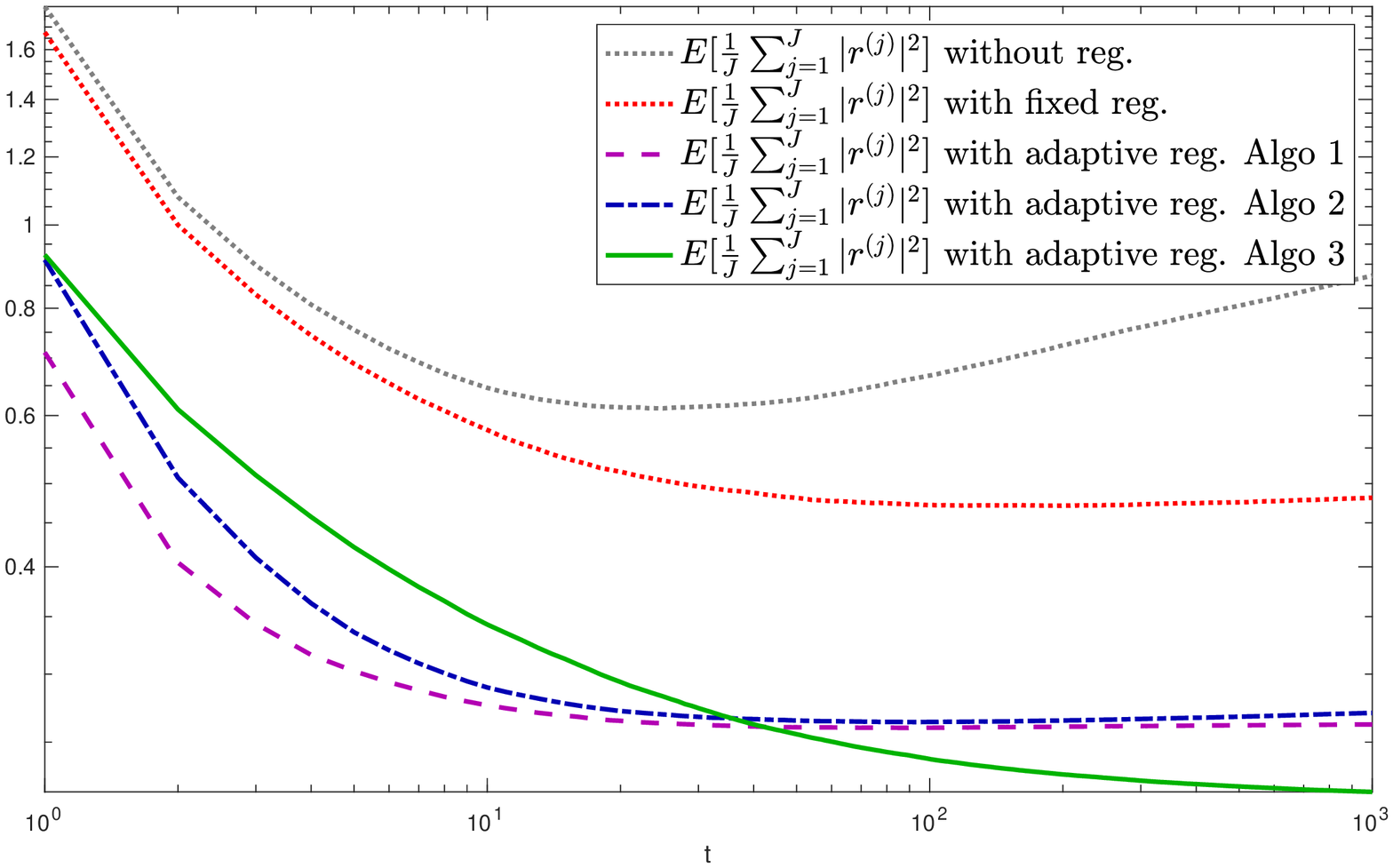}
	\end{subfigure}
   \caption{Learned regularization parameter (left) and the corresponding residuals for the different presented algorithms in the nonlinear example with $\lambda^\dagger = 20$.}\label{fig:nonl_ex_regpar_res}
\end{figure}

\begin{figure}[!htb]
	\begin{subfigure}[c]{0.49\textwidth}
	\includegraphics[width=1.1\textwidth]{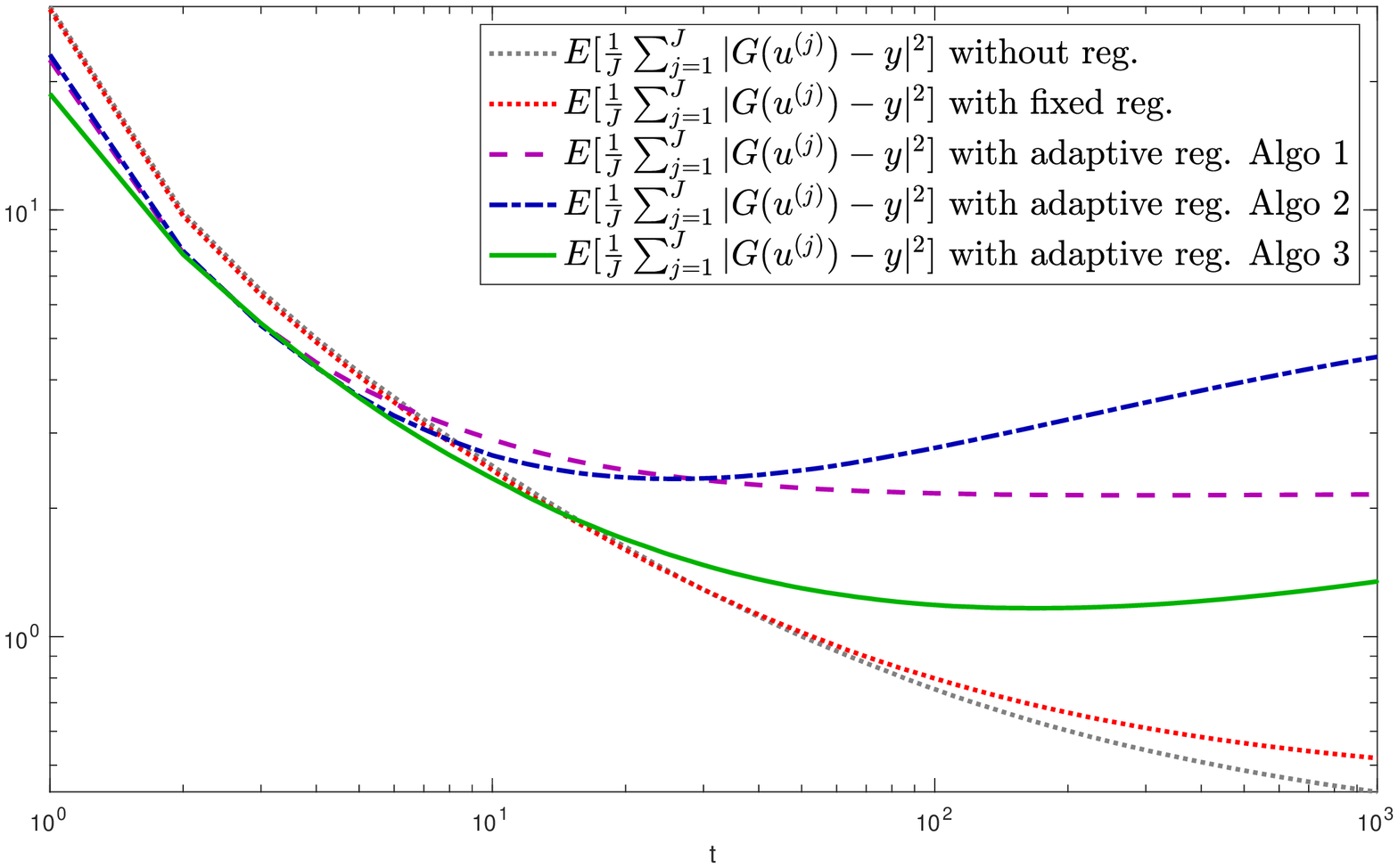}
	\end{subfigure}
	\begin{subfigure}[c]{0.49\textwidth}
	\includegraphics[width=1.1\textwidth]{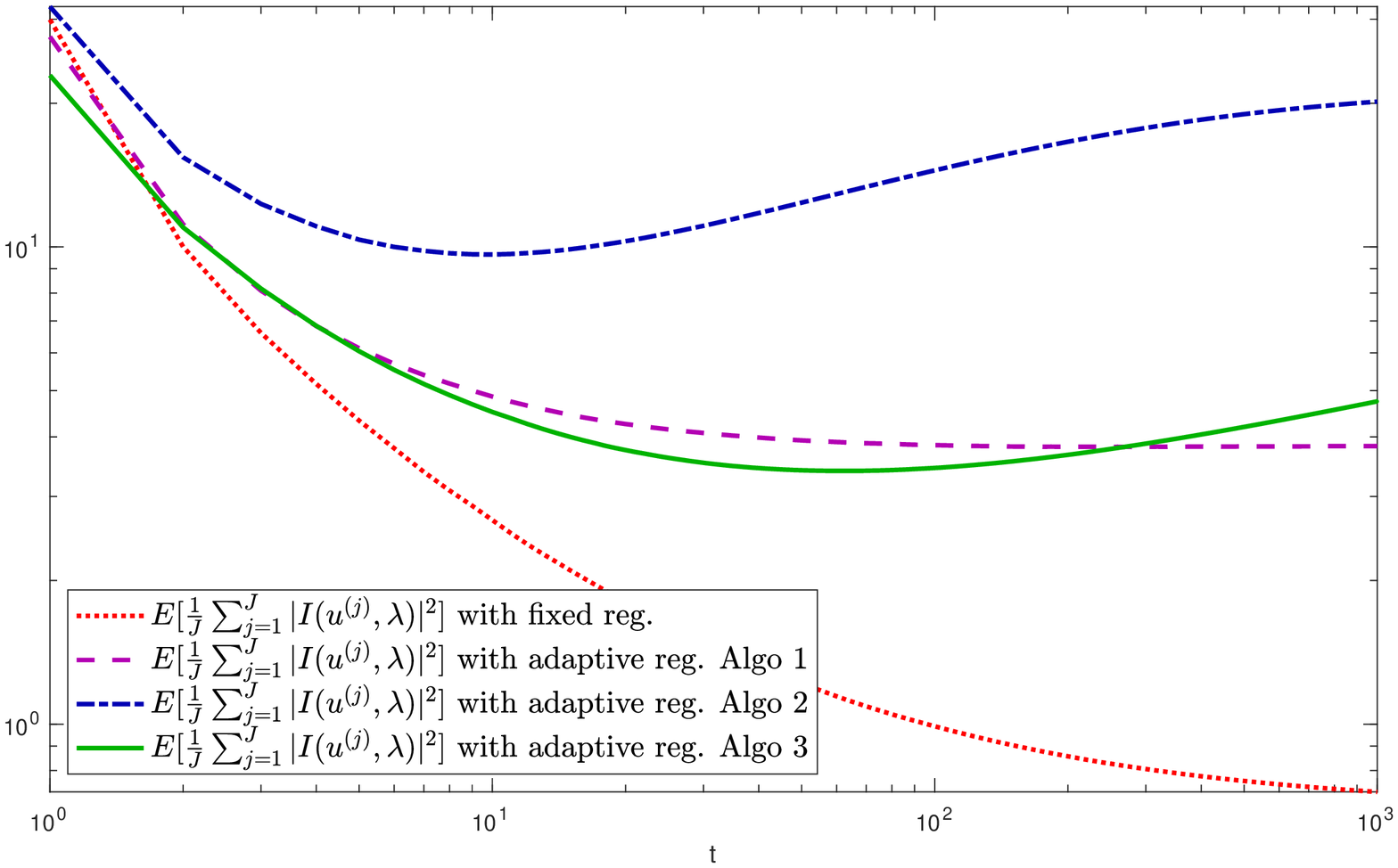}
	\end{subfigure}
     \caption{Data misfit (left) and Tikhonov regularized loss (right) for the different presented algorithms in the nonlinear example with $\lambda^\dagger = 20$. }\label{fig:nonl_ex_loss}
\end{figure} 

\begin{figure}[!htb]
	\begin{subfigure}[c]{0.49\textwidth}
	\includegraphics[width=1.1\textwidth]{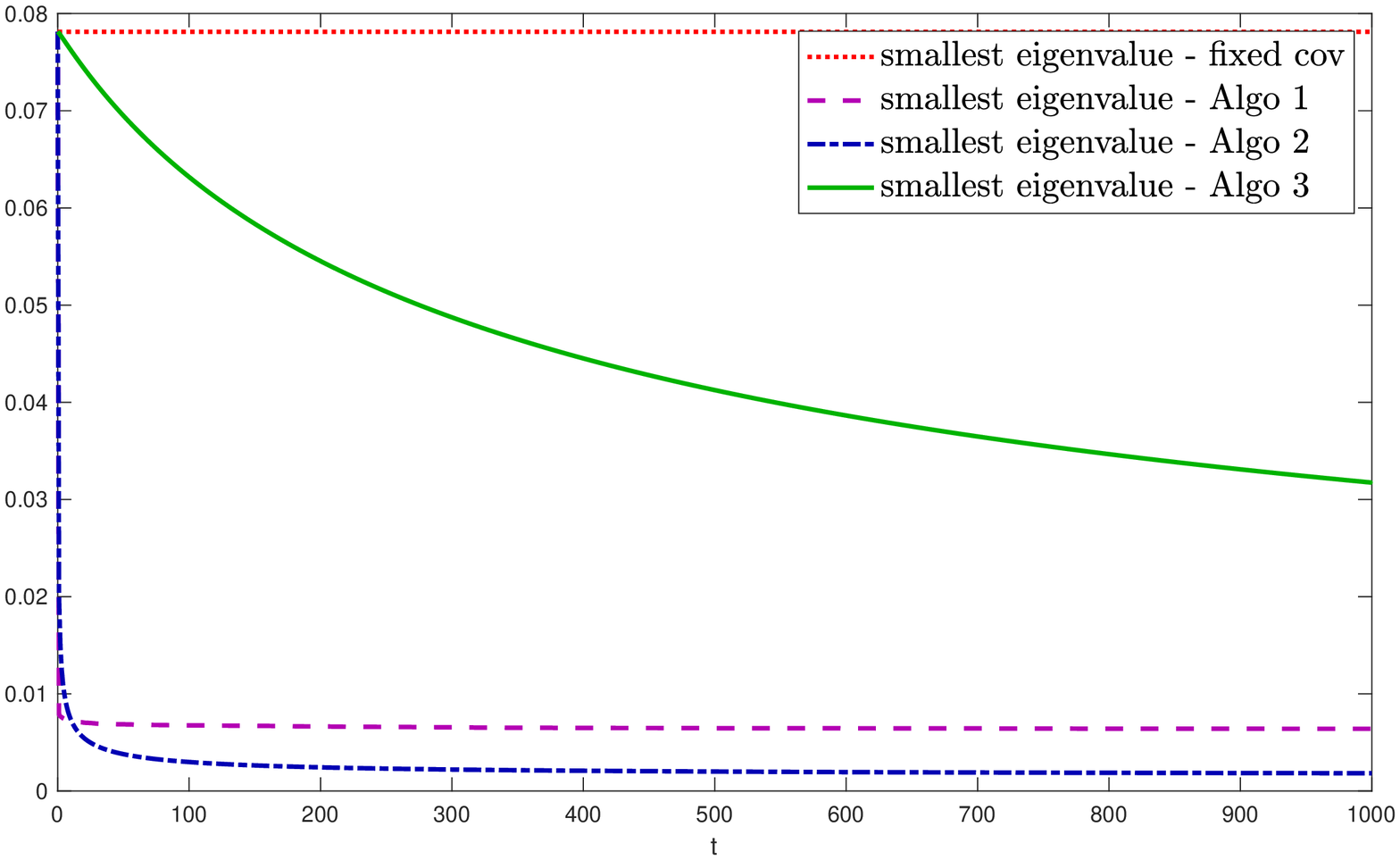}
	\end{subfigure}
	\begin{subfigure}[c]{0.49\textwidth}
	\includegraphics[width=1.1\textwidth]{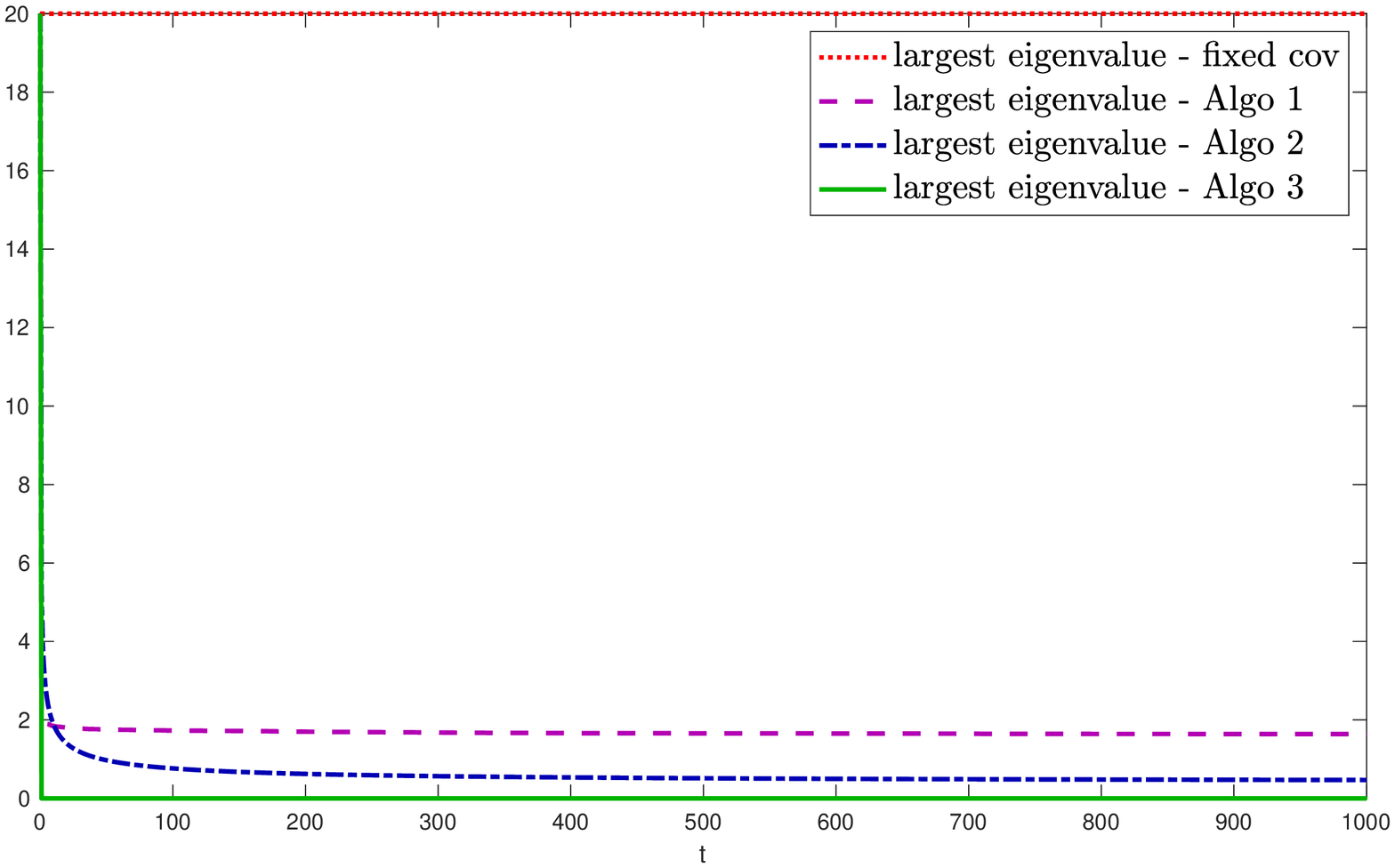}
	\end{subfigure}
    \caption{Smallest (left) and largest (right) eigenvalue of the learned regularization covariance for the different presented algorithms in the nonlinear example with $\lambda^\dagger = 20$.}\label{fig:nonl_ex_eigenvalues_cov}
\end{figure}

\section{Conclusion}
\label{sec:conc}

Regularization is an important tool in applied mathematics which can help alleviate stability issues. The motivation behind this work was to introduce adaptive regularization techniques for stochastic EKI applied to noisy measurements. We considered applying these techniques to the case when EKI is represented as a coupled system of SDEs.  Introducing the continuous-time limit under linear assumptions on the forward map we were able to provide various theoretical results, including well-posedness, the ensemble collapse and convergence of the regularized loss function, irrespective of noise. The analysis presented here is an extension of the work \cite{SS17b} which incorporates Tikhonov regularization adapted to EKI \cite{CST19}. We further introduced various adaptive schemes for choosing the regularization parameter which were tested on different models, both linear and nonlinear PDEs. From the numerical experiments it was shown that our adaptive regularization schemes outperform the fixed regularization.
 
As the theory of this work is specific to the linear case, a natural question to ask is could this be extended to the a nonlinear setting. To help achieve this one could use the tools introduced in \cite{CT19,BSWW2021}, but adapting this in a continuous-time setting. A second direction would be to introduce other forms of regularization, such as $L_1$, which has been studied in a non-adaptive fashion \cite{YL21,SSW20}.  Furthermore, we emphasize that the presented adaptive regularization schemes may also be applied to other iterative regularization methods for inverse problems such as Landweber or Levenberg-Marquardt regularization \cite{BB18,LP13}. A final potential direction could be a more complete analysis in the time-dependent setting including a time-dependent forward model as well, which is of current ongoing work of the coauthors. This could potentially allow for better inverse modeling of time-dependent PDEs, through a time-dependent EKI.

\section*{Acknowledgements} 
CS and SW are grateful to the DFG RTG1953 "Statistical Modeling of Complex Systems and Processes” for funding of this research. NKC is supported by KAUST baseline funding. XTT is supported by the National University of Singapore grant R-146-000-292-114. The authors acknowledge support by the state of Baden-W\"urttemberg through bwHPC.

\appendix
\section{Proof of Theorem \ref{thm:residuals_VI}}\label{app:1}

\begin{proof}
We first note that by chain rule we can write
\begin{equation*}
d \tilde r_t^{(j)} = d\left( \Sigma_t^{-1/2}\mathfrak r_t^{(j)})\right) = (d\Sigma_t^{-1/2}) \mathfrak r_t^{(j)} + \Sigma_t^{-1/2} d\mathfrak r_t^{(j)},
\end{equation*}
where $d\mathfrak r_t^{(j)} = Fdu_t^{(j)} - Fdu_t^\ast$ and
\begin{align*}
\frac{d u_t^\ast}{dt} &= \left(A^\top\Gamma_t^{-1}A+C_t^{-1}\right)^{-1}A^\top\Psi_t^\Gamma y\\ &\quad+ \left(A^\top\Gamma_t^{-1}A+C_t^{-1}\right)^{-1}
(A^\top\Psi_t^\Gamma A+\Psi_t^C) \left(A^\top\Gamma_t^{-1}A+C_t^{-1}\right)^{-1}\Gamma_t^{-1}y
\end{align*}
Since we assume $\left(A^\top\Gamma_t^{-1}A+C_t^{-1}\right)^{-1}\preceq C_t\preceq M_1I, \Gamma_t^{-1}\preceq M_2 I$. So
\[
{\frac{d u_t^\ast}{dt} \leq \|A\|M_1 \|\Psi_t^\Gamma\|\|y\|+M_1^2 (\|A\|^2+1)\|\Psi_t^\Gamma\|\|y\|}
\leq M_A \|\Psi_t\|.
\] 
Let $\tilde B_t ={\Sigma^{-1/2}_t}FBF^\top{\Sigma^{-1/2}_t} =: {\Sigma^{-1/2}_t}B{\Sigma^{-1/2}_t}\in\mathcal L(\R^{{d_u}+K},\R^{{d_u}+K})$ be a positive definite operator, $\alpha\in(0,1),\ R>0$ and assume, that that the smallest eigenvalues of $\tilde B_t$ are bounded by $\sigma_{\min} = c>0$. We apply chain rule and It\^o's formula in order to obtain 
\begin{align*}
d|{\Sigma^{-1/2}_t}\mathfrak r_t^{(j)}|^2 	&= 
{-2\langle{\Sigma^{-1/2}_t}\mathfrak r_t^{(j)},\Sigma^{-1/2}_t \frac{d u^*_t}{dt}dt\rangle}+{(\mathfrak r_t^{(j)})^{\top}\Psi_t \mathfrak r^{(j)}_t}\\
&\quad -2\left\langle{\Sigma^{-1/2}_t}\mathfrak r_t^{(j)},{\Sigma^{-1/2}_t}\left(C(\mathfrak r_t)+\frac{1}{t^\alpha+R}B\right){\Sigma^{-1/2}_t}{\Sigma^{-1/2}_t}\mathfrak r_t^{(j)}\right\rangle\, d t\\ &\quad+ 2\langle{\Sigma^{-1/2}_t}\mathfrak r_t^{(j)},{\Sigma^{-1/2}_t}C(\mathfrak r_t){\Sigma^{-1/2}_t}d W_t^{(j)}\rangle
\\
					&\quad+\frac1J\sum\limits_{j=1}^J\left\langle{\Sigma^{-1/2}_t}(\mathfrak r_t^{(k)}-\overline{\mathfrak r_t}),{\Sigma^{-1/2}_t}C(\mathfrak r_t){\Sigma^{-1/2}_t}{\Sigma^{-1/2}_t}(\mathfrak r_t^{(k)}-\overline{\mathfrak r_t})\right\rangle\,d t.
\end{align*}
We take the empirical mean over all particles leading to
\begin{align*}
d\frac1J\sum\limits_{j=1}^J|{\Sigma^{-1/2}_t}\mathfrak r_t^{(j)}|^2 	&= -\frac2J\sum\limits_{j=1}^J\left\langle{\Sigma^{-1/2}_t}\mathfrak r_t^{(j)},{\Sigma^{-1/2}_t}\left(C(\mathfrak r_t)+\frac{1}{t^{\alpha}+R}B\right){\Sigma^{-1/2}_t}{\Sigma^{-1/2}_t}\mathfrak r_t^{(j)}\right\rangle\,d t\\ &\quad + \frac2J\sum\limits_{j=1}^J\langle{\Sigma^{-1/2}_t}\mathfrak r_t^{(j)},{\Sigma^{-1/2}_t}C(\mathfrak r_t){\Sigma^{-1/2}_t}d W^{(j)}\rangle\\
				&\quad +\frac1J\sum\limits_{k=1}^J\langle{\Sigma^{-1/2}_t}(\mathfrak r_t^{(k)}-\overline{\mathfrak r_t}),{\Sigma^{-1/2}_t}C(\mathfrak r_t){\Sigma^{-1/2}_t}{\Sigma^{-1/2}_t}(\mathfrak r_t^{(k)}-\overline{\mathfrak r}_t)\rangle\\ &\quad +{(\mathfrak r_t^{(k)})^{\top}\Psi_t \mathfrak r^{(k)}_t}\,d t-\frac1J \sum_{j=1}^J{2\langle{\Sigma^{-1/2}_t}\mathfrak r_t^{(j)},\Sigma^{-1/2}_t \frac{d u^*_t}{dt}\rangle dt}.
\end{align*}
By applying Cauchy Schwarz inequality and the assumption $\|\frac{d u^*_t}{dt}\|\le \|\Psi_t\|$ we obtain
\[
{\langle \Sigma^{-1/2}_t\mathfrak r_t^{(j)},\Sigma^{-1/2}_t \frac{d u^*_t}{dt}\rangle\leq 
\|\Sigma^{-1/2}_t\mathfrak r_t^{(j)}\|^2\|\Psi_t\|+\|\Psi_t\|},
\]
and in fact, we have that 
\[
d\E \frac1J\sum\limits_{j=1}^J|{\Sigma^{-1/2}_t}\mathfrak r_t^{(j)}|^2
\leq t^{-\beta}(\E \frac1J\sum\limits_{j=1}^J|{\Sigma^{-1/2}_t}\mathfrak r_0^{(j)}|^2+C).
\]
Since $\beta>1$, there exists a constant $c_\beta$ such that for all $t_0\ge0$
\begin{equation*}
\E \frac1J\sum\limits_{j=1}^J|{\Sigma^{-1/2}_{t_0}} \mathfrak r_{t_0}^{(j)}|\leq c_\beta\E \frac1J\sum\limits_{j=1}^J|{\Sigma^{-1/2}_0} \mathfrak r_0^{(j)}|,
\end{equation*}
and hence, we also obtain 
\[
\E {\langle \Sigma^{-1/2}_t\mathfrak r_t^{(j)},\Sigma^{-1/2}_t \frac{d u^*_t}{dt}\rangle\leq 
 c_1\|\Psi_t\|},
\]
for some constant $c_1>0$.  With these observations, we are now able to write 
\begin{align*}
0&\le \E\left[\frac1J\sum\limits_{j=1}^J|{\Sigma^{-1/2}_t}\mathfrak r_{t+s}^{(j)}|^2\right] \\	&\le\E\left[\frac1J\sum\limits_{j=1}^J|{\Sigma^{-1/2}_s}\mathfrak r_s^{(j)}|^2\right]
+\frac{1}{J}\int^{s+t}_s \E {\langle \mathfrak r_r^{(j)},\Psi_r \mathfrak r^{(j)}_r\rangle } dr+c_1\int^{s+t}_s \|\Psi_r\| dr \\
&\quad -\frac{2}J\int_s^{s+t} \E\left[\sum\limits_{j=1}^J\langle{\Sigma^{-1/2}_t}\mathfrak r_r^{(j)},{\Sigma^{-1/2}_t}C(\mathfrak r_r){\Sigma^{-1/2}_t}{\Sigma^{-1/2}_t}\mathfrak r_r^{(j)}\rangle\right]\, d r\\
&\quad-\frac2J\int_s^{s+t} \frac{1}{r^{\alpha}+R}\E\left[\sum\limits_{j=1}^J\langle {\Sigma^{-1/2}_t}\mathfrak r_r^{(j)},{\Sigma^{-1/2}_t}B{\Sigma^{-1/2}_t}{\Sigma^{-1/2}_t}\mathfrak r_r^{(j)}\rangle\right]\, d r\\
&\quad+\frac1J\int_s^{s+t} \E\left[\sum\limits_{j=1}^J\langle{\Sigma^{-1/2}_t}(\mathfrak r_r^{(j)}-\overline{\mathfrak r}_r),{\Sigma^{-1/2}_t}C(\mathfrak r_r){\Sigma^{-1/2}_t}{\Sigma^{-1/2}_t}(\mathfrak r_r^{(j)}-\overline{\mathfrak r}_r)\rangle\right]\,d r \\
&\le \E\left[\frac1J\sum\limits_{j=1}^J|{\Sigma^{-1/2}_s}\mathfrak r_s^{(j)}|^2\right]
+\frac{1}{J}\int^{s+t}_s \E {\langle \mathfrak r_r^{(j)}),\Psi_r \mathfrak r^{(j)}_r\rangle } +c_1\|\Psi_r\| dr
 \\
 &\quad -\frac{1}J\int_s^{s+t} \E\left[\sum\limits_{j=1}^J\langle{\Sigma^{-1/2}_t}\mathfrak r_r^{(j)},{\Sigma^{-1/2}_t}C(\mathfrak r_r){\Sigma^{-1/2}_t}{\Sigma^{-1/2}_t}\mathfrak r_r^{(j)}\rangle\right]\, d r\\
&\quad-\frac2J\int_s^{s+t} \frac{1}{r^{\alpha}+R}\E\left[\sum\limits_{j=1}^J\langle {\Sigma^{-1/2}_t}\mathfrak r_r^{(j)},{\Sigma^{-1/2}_t}B{\Sigma^{-1/2}_t}{\Sigma^{-1/2}_t}\mathfrak r_r^{(j)}\rangle\right]\, d r,\\
\end{align*}
where we have used
\begin{equation*}
\sum_{k=1}^J \langle \mathfrak r^{(j)}-\bar{\mathfrak r}, C(\mathfrak r)(\mathfrak r^{(j)}-\bar{\mathfrak r})\rangle = \sum_{k=1}^J \langle \mathfrak r^{(j)} C(\mathfrak r)\mathfrak r^{(j)}\rangle - J \langle \bar{\mathfrak r}, C(\mathfrak r) \bar{\mathfrak r}\rangle \le \sum_{k=1}^J \langle \mathfrak r^{(j)}, C(\mathfrak r)\mathfrak r^{(j)}\rangle,
\end{equation*}
in the second inequality. Moreover it implies, by setting $s=t_0>1$ and using Assumption~\ref{aspt:convGamma}
\begin{align*}
c_\beta &\E\left[\frac1J\sum\limits_{j=1}^J|{\Sigma^{-1/2}_{0}}\mathfrak r_0^{(j)}|^2\right]+c_1\kappa_1 \\ &\ge \frac{1}J\int_{t_0}^{t} \E\left[\sum\limits_{j=1}^J\langle{\Sigma^{-1/2}_s}\mathfrak r_s^{(j)},{\Sigma^{-1/2}_s}(C(\mathfrak r_s)+ \frac{1}{s^{\alpha}+R}B){\Sigma^{-1/2}_s}{\Sigma^{-1/2}_s}\mathfrak r_s^{(j)}\rangle\right]\, d s\\
&\quad - \frac{1}{J}\int^{t}_{t_0} \E {\langle \mathfrak r_s^{(j)}),\Psi_s \mathfrak r^{(j)}_s\rangle }\, dr\\
&\ge \frac{1}{J} \int_{t_0}^{\top} \left(\frac{\sigma_{\min}}{s^\alpha+R}-\frac{1}{s^\beta+R}\right)\E\left[\sum\limits_{j=1}^J|{\Sigma^{-1/2}_s}\mathfrak r_s^{(j)}|^2\right]\,ds.
\end{align*}
We pick a $t_0>1$ so that $\frac{1}{s^{\beta}+R}<\frac{\sigma_{\min} }{2s^\alpha+2R}$ such that 
\begin{align*}
c_\beta &\E\left[\frac1J\sum\limits_{j=1}^J|{\Sigma^{-1/2}_{0}}\mathfrak r_0^{(j)}|^2\right]+c_1\kappa_1\\ &\ge \frac{1}{J} \int_{t_0}^{\top} \left(\frac{\sigma_{\min}}{2(s^\alpha+R)}\right)\,ds \left\{\min_{r\le t}\E\left[\sum\limits_{j=1}^J|{\Sigma^{-1/2}_r}\mathfrak r_r^{(j)}|^2\right]\right\}.
\end{align*}
We use
\begin{equation*}
\int_{t_0}^{\top} \frac{1}{2(s^\alpha+R)}\,ds \ge \frac{1}{2(1+R)(1-\alpha)}( t^{1-\alpha}-t_0),
\end{equation*}
and conclude with 
\begin{equation*}
 \left\{\min_{r\le t}\E\left[\sum\limits_{j=1}^J|{\Sigma^{-1/2}_r}\mathfrak r_r^{(j)}|^2\right]\right\} \in \mathcal O(t^{-(1-\alpha)}).
\end{equation*}

\end{proof}

\bibliographystyle{plain}
\bibliography{references.bib}

\end{document}